\newtheorem{thm}{Theorem}[section]
\newtheorem{lemma}[thm]{Lemma}
\newtheorem{prop}[thm]{Proposition}
\newtheorem{rem}[thm]{Remark}
\numberwithin{equation}{section}
\numberwithin{figure}{section}
\theoremstyle{plain}
\theoremstyle{plain}
\theoremstyle{plain}
\numberwithin{equation}{section}
\numberwithin{figure}{section}
\theoremstyle{plain}
\theoremstyle{plain}
\theoremstyle{plain}
\newcommand{\les}{\lesssim}
\newcommand{\gam}{{\gamma}}
\newcommand{\vp}{{\varphi}}
\newcommand{\ve}{{\varepsilon}}
\newcommand{\de}{{\delta}}
\newcommand{\al}{{\alpha}}
\newcommand{\ka}{{\kappa}}
\newcommand{\R}{{\mathbb R}}
\newcommand{\Z}{{\mathbb Z}}
\newcommand{\C}{{\mathbb C}}
\newcommand{\N}{{\mathbb N}}
\newcommand{\p}{\partial}
\newcommand{\brad}{{\bra{D}}}
\newcommand{\braxi}{{\bra{\xi}}}
\newcommand{\brat}{{\bra{t}}}
\newcommand{\thez}{{\theta_0}}
\newcommand{\theo}{{\theta_1}}
\newcommand{\thet}{{\theta_2}}
\newcommand{\thej}{{\theta_j}}
\newcommand{\kaz}{{\ka_0}}
\newcommand{\kao}{{\ka_1}}
\newcommand{\kat}{{\ka_2}}
\newcommand{\doth}{{\dot{H}}}
\newcommand{\cm}{{\rm CM}}
\newcommand{\nmax}{{N_{012}^{\max}}}
\newcommand{\nmin}{{N_{012}^{\min}}}
\def\normo#1{\left\|#1\right\|}
\def\bra#1{\left\langle #1\right\rangle}
\def\wt#1{\widetilde{#1}}
\def\wh#1{\widehat{#1}}
\begin{document}
\title[Maxwell-Dirac system in $\mathbb R^{1 + 4}$]{Scattering results for the (1+4) dimensional massive Maxwell-Dirac system under Lorenz gauge condition}

\author{Kiyeon Lee}
\address{Stochastic Analysis and Application Research Center(SAARC), Korea Advanced Institute of Science and Technology, 291 Daehak-ro, Yuseong-gu, Daejeon, 34141, Republic of	Korea}
\email{kiyeonlee@kaist.ac.kr}

\thanks{2020 {\it Mathematics Subject Classification.} 35Q41, 35Q55, 35Q40.}
\thanks{{\it Keywords and phrases.} Maxwell-Dirac system, global existence, space-time resonance, scattering theory, Lorenz gauge condition.}

\begin{abstract}
	This paper investigates the \emph{massive} Maxwell-Dirac system under the Lorenz gauge condition in (4+1) dimensional Minkowski space. The focus is on establishing global existence and scattering results for small solutions on the weighted Sobolev class. The imposition of the Lorenz gauge condition transforms the Maxwell-Dirac system into a set of Dirac equations coupled with an electromagnetic potential derived from five quadratic wave equations. To achieve a comprehensive understanding of the global solution and its behavior, we employ various energy estimates based on the space-time resonance argument. This involves addressing diverse resonance functions arising from the free Dirac and wave propagators. Additionally, we identify the space-time resonant sets associated with the \emph{massive} Maxwell-Dirac system.
\end{abstract}

\maketitle

\tableofcontents

\section{Introduction}

The subject of this paper is the global existence of the (1+4) dimensional \emph{massive} Maxwell-Dirac system and a linear scattering phenomenon of the small solution under the Lorenz gauge condition. To handle the nonlinearity that possesses both of Klein-Gordon type phase and wave type phase, we utilize the space-time resonance argument. To this end, we investigate the time, space, and space-time resonant sets, combined with the Dirac and Maxwell parts. Then we proceed with a bootstrap argument, based on the energy estimates.  See Sections \ref{sec:ideas} and \ref{sec:resonance} for the details. Compared to \emph{massless} Maxwell-Dirac system, the global solutions exhibit the \emph{linear} behavior  in our main result.

\subsection{Maxwell-Dirac system}
In this paper we consider Cauchy problem of (1+4)-dimensional Maxwell-Dirac system:
\begin{align}
	\left\{ \begin{aligned}
		i\al^\mu \textbf{D}_{\mu} \psi &= m \gamma^0 \psi,\\
		\partial^\nu F_{\mu\nu} &= - \bra{\psi, \al^\mu \psi}.
	\end{aligned}
	\right.\label{md} \tag{MD}
\end{align}
where  the spinor field  is $\psi : \R^{1+4} \to \C^{4}$, the gauge fields are $A_\mu : \R^{1+4} \to \R$  and the covariant derivative is  $\textbf{D}_\mu = \partial_\mu -iA_\mu$ for $\mu = 0, \cdots , 4$.  The curvature is defined by $F_{\mu\nu} = \partial_\mu A_\nu - \partial_\nu A_\mu$. The Dirac matrices are Hermitian and have the relation
\begin{align*}
	\gamma^\mu \gamma^\nu + \gamma^\nu \gamma^\mu = 2\delta^{\mu\nu}I_4,
\end{align*}
where  $I_4$ is a $4\times 4$ identity matrix. We also define $\al^\mu = \gamma^0 \gamma^\mu$ for $\mu =0, \, \cdots, \,4$. The $\langle \cdot , \cdot \rangle$ denotes a standard complex inner product. Greek indices indicate the space-time components $\mu,\nu = 0,\, 1,\,2,\,3,\,4$ and roman indices mean the spatial components $j=1,\, 2,\,3,\,4$ in the sequel.  The Einstein summation convention is in effect with Greek indices summed over $\mu=0,\cdots, 4$ and Latin indices summed over the spatial variables $j = 1, \cdots, 4$. Thus $\p^\mu = \eta^{\mu\nu}\p_\nu$, and $\p_0=\p_t$ with Minkowski metric $\eta = {\rm diag}(-1,1,\cdots,1)$. We call \emph{massive} and \emph{massless} \eqref{md} if the mass parameter is $m > 0$ and $m=0$, respectively.

Maxwell-Dirac system is the Euler-Lagrange equations for $\textbf{S}[A_\mu,\psi]$, where
\begin{align*}
	\textbf{S}[A_\mu,\psi] = \iint_{\R^{1+4}} -\frac14 F^{\mu\nu}F_{\mu\nu} + i \bra{\gamma^\mu \textbf{D}_\mu \psi, \gamma^0 \psi} - m \bra{\psi,\psi} dx dt.
\end{align*}
Computing this Euler-Lagrangian, we arrive at the \eqref{md}. The equations \eqref{md} model an electron in electromagnetic field and form a fundamental system in quantum electrodynamics. For detailed description, we refer to \cite{book:bjorken-drell,book:schwartz}.

The one of basic features of \eqref{md} is the gauge invariance. Indeed, \eqref{md} is invariant under the gauge transformation $
	(\psi,A) \longmapsto (e^{i\chi}\psi, A-d\chi),$ for a real-valued function $\chi$ on $\R\times\R^{4}$. For the sake of concreteness of our discussion, let us choose Lorenz gauge, which is defined by the condition  
\begin{align}\label{gauge:lorenz}
	\partial^\mu A_\mu =0.
\end{align}
Under this gauge condition, \eqref{md} becomes 
\begin{align}
\left\{ \begin{aligned}(-i\partial_t +  \alpha \cdot D  + m\gam^0)\psi &=  A_\mu\alpha^\mu \psi\quad\mathrm{in}\;\;\mathbb{R}^{1+4},\\
\square A_{\mu} & = -\bra{\psi , \alpha_\mu \psi },
\end{aligned}
\right.\label{maineq:md-lorenz}
\end{align}
with initial data
\begin{align}\label{eq:initial}
	\psi(0) = \psi_0,\quad A_\mu(0)& = a_{ \mu}, \quad\partial_tA_\mu(0) = \dot{a}_{ \mu}.
\end{align}
We denoted  $\al_\mu = \al^\mu$ in the second equation of \eqref{maineq:md-lorenz}. The momentum operator $D = -i\nabla$ is defined by $D_j = -i\partial_j\;(j = 1, \cdots,4)$ and $\al = (\al^1,\cdots,\al^4)$.  The positive constants $m$ denotes the mass of Fermion. We also denote $\square=-\partial_{t}^{2}+\Delta$. For other common choices, there are Coulomb gauge condition $\p^j A_j =0$ and temporal gauge condition $A_0=0$.

\subsection{Previous works} There is a large amount of literature dealing with the problem of local and global well-posedness and asymptotic behavior of small solutions of \eqref{md}. For early work in \cite{gross1966,bour1996}, they considered the local well-posedness of \eqref{md} on $\R^{1+3}$ and Georgiev \cite{geor1991} proved the global existence for small, smooth initial data. Later, D'Ancona-Foschi-Selberg \cite{anfosel2010} developed the results up to almost optimal regularity $(\psi_0, a_\mu, \dot{a}_\mu) \in H^\ve \times H^{\ve+\frac12} \times H^{\ve-\frac12}$ on $\R^{1+3}$ in Lorenz gauge condition. They exploited the spinorial null structures, which stem from Dirac projection operators.  D'Ancona-Selberg \cite{ansel} extended their approach to \eqref{md} on $\R^{1+2}$ and proved global well-posedness in the charge class $L^2 \times H^\frac12 \times H^{-\frac12}$. Regarding Local well-posedness results for \eqref{md} with Coulomb gauge condition, we refer to \cite{bemaupoup1998,masnaka2003-imrn}. Moreover, Masmoudi and Nakanishi \cite{masnaka2003-cmp} showed the unconditional uniqueness results for (1+3) dimensional \eqref{md} for Coulomb gauge condition.

Concerning an asymptotic behavior of global solution to \eqref{md} in physical space $\R^{1+3}$, we refer to \cite{psa2005,flasimon1997}.   Psarelli \cite{psa2005} showed the global existence and asymptotic behavior for  (3+1) Minkowski space the \emph{massive} \eqref{md}  with compactly supported small initial data.   Regarding the other asymptotic behavior results of the global solution to \eqref{md}, the modified scattering results are shown by some results. Gavrus and Oh \cite{gaoh} established the global well-posedness of \emph{massless} \eqref{md} with Coulomb gauge condition for (1+d) Minkowski space $d \ge 4$ and modified scattering of the solution. They also showed the global well-posedness in $\doth^\frac{d-3}2\times \doth^{\frac{d-2}2} \times \doth^{\frac{d-4}2}$, which is absolutely critical regularity.  Regarding Dirac equations with the magnetic potential, D'Ancona and Okamoto  \cite{ancooka2017} showed an scattering results. Vanishing the magnetic field, in \cite{CKLY2022,cloos}, they established the modified scattering results for 3 dimensional \eqref{md} under Lorenz gauge condition,  independently. 

The Maxwell-Klein-Gordon system (MKG), a scalar counterpart of \eqref{md}, has been studied  by many authors. Regarding the global well-posedness for the (1+4) dimensional \emph{massless} (MKG), we refer the reader to \cite{kristertata2015-duke,rodtao2004}. 
In \cite{krieluhr2015,ohtataru2016-inven,ohtata2018}, they independently studied the scattering results for  \emph{massless} (MKG)  in (1+4) Minkowski dimension under the Coulomb gauge condition. Concerning the global existence for (1+3) Minkowski dimension, we refer to \cite{seltes2010}. In particular, in \cite{yang2018,yangyu2019,cankaulin2019}, the authors presented an asymptotic behavior of the solution to the \emph{massless} (MKG) under Coulomb and Lorenz gauge condition in (1+3) dimension. While \emph{massless} (MKG) are concerned by many authors, there is a few scattering results for \emph{massive} (MKG). We mention the recent result in \cite{gav2019} which establishes global regularity for the modified scattering results of the \emph{massive} (MKG) under the Coulomb gauge condition. See also \cite{klaiwangyang2020,fangwangyang2021} and \cite{dowya2024,dolimayu2024} for the asymptotic behavior of the (1+3) dimensional \emph{massive} (MKG) and wave-Klein-Gordon type systems, respectively.

\subsection{Main theorem}
As seen from the listed results above, while some modified scattering results are known,  the \emph{linear} scattering results of both \emph{massive} and \emph{massless} \eqref{md} are unknown in any dimension even if any gauge conditions are imposed. In this paper, we consider the global existence of the solution to \eqref{maineq:md-lorenz} and scattering results for \emph{massive} \eqref{maineq:md-lorenz} ($m>0$) on Minkowski space $\R^{1+4}$. For the sake of simplicity, we normalize $m=1$. Our main result is the following:
\begin{thm}\label{mainthm}
Let $n$ is sufficiently large number. Then we have the following.
	\begin{itemize}
		\item[(1)] Let initial data \eqref{eq:initial} satisfy that, for some $\ve_0>0$,
	\begin{align}\begin{aligned}
			\|\psi_{0}\|_{H^n}+\| \bra{x}^2\psi_{0}\|_{L^2}& \\
			\|(a_{\mu}, \dot a_{ \mu})\|_{H^n \times H^{n-1}} + \sum_{k=1,2}\|(x^ka_{\mu}, x^k\dot a_{\mu})\|_{\doth^{k} \times \doth^{k-1}} &
	\end{aligned}< \ve_{0}.\label{condition-initial}
\end{align}
	Then, there exists a global solution $(\psi(t), A_\mu(t))$ to \eqref{md} under the Lorenz gauge condition \eqref{gauge:lorenz}.
	
	\item[(2)] Under the assumption \eqref{condition-initial}, the solution decays as follows:
	\begin{align}\label{thm:decay}
		\|\psi(t)\|_{L^\infty} \les \brat^{-2+\beta +\de}\ve_0, \;\; \mbox{ and }\;\; \|A_{\mu}(t)\|_{L^\infty} \les \brat^{-\frac32}\ve_0,
	\end{align}
	for some small $\beta,\de =\beta(n),\de(n)>0$.
	\item[(3)] Moreover, the solution $(\psi(t), A_\mu(t) )$ to \eqref{md} scatters linearly in $L^2 \times \doth^{1} \times L^2$ as $t \to \infty$. Indeed, there exist $(\psi^\infty(t), A_\mu^\infty(t))$ such that
	\begin{align*}
		\normo{\psi(t) - \psi^\infty(t)}_{L^2} +\normo{(A_\mu(t), \p_t A_\mu(t))- (A_\mu^\infty(t),\p_t A_\mu^\infty(t))}_{\doth^1\times L^2} \xrightarrow{t\to \infty} 0,
	\end{align*}
	and $(\psi^\infty(t), A_\mu^\infty(t) )$ is a solution to the linear system \eqref{md}:
	\begin{align*}
		\left\{ \begin{aligned}i\al^\mu \p_\mu \psi^\infty &= m\gamma^0 \psi^\infty,\\
			\square A_{\mu}^\infty & = 0.
		\end{aligned}
		\right.
	\end{align*}
\end{itemize}
\end{thm}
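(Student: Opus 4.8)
The plan is to run a bootstrap/continuity argument in a weighted energy space, reducing everything to a finite list of bilinear estimates organized by the space-time resonance structure. First I would fix a small $T$-dependent norm, combining (i) high Sobolev regularity $\|\psi(t)\|_{H^n}+\|(A_\mu,\p_t A_\mu)(t)\|_{H^n\times H^{n-1}}$, (ii) weighted/vector-field control encoding the $\bra{x}^2$ decay on $\psi$ and the $\doth^k$ weights on $(a_\mu,\dot a_\mu)$ for $k=1,2$, and (iii) the pointwise decay rates $\brat^{-2+\beta+\de}$ and $\brat^{-3/2}$ asserted in \eqref{thm:decay}. The bootstrap assumption is that this norm is $\le C\ve_0$ on $[0,T]$; I would then show it is in fact $\le \frac12 C\ve_0$, which by local well-posedness and continuity extends the solution to all time. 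The pointwise decay rates themselves are \emph{not} assumed separately: they follow from (i)--(ii) by the standard Klein-Gordon and wave dispersive/Klainerman-Sobolev inequalities — $\brat^{-2}$ being the free Klein-Gordon rate in $\R^{1+4}$ (with the $\beta+\de$ loss absorbing the weighted-norm growth), and $\brat^{-3/2}$ the free wave rate in $\R^{1+4}$.

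The analytic heart is closing the energy and weighted estimates for the Duhamel terms. Writing $\psi=e^{it(\alpha\cdot D+\gamma^0)}\psi_0+\int_0^t e^{i(t-s)(\alpha\cdot D+\gamma^0)}(A_\mu\alpha^\mu\psi)(s)\,ds$ and similarly for $A_\mu$ via the wave propagator acting on $-\bra{\psi,\alpha_\mu\psi}$, the nonlinearities are quadratic, so after Fourier transform each contribution is a bilinear Fourier integral with a phase $\phasep$ built from combinations of $\pm\bra{\xi}$ (Dirac/Klein-Gordon) and $\pm|\eta|$ (wave). I would split each interaction into the time-nonresonant region (where $\partial_s$ of the phase is bounded below, integrate by parts in $s$ to gain a power of $\brat$), the space-nonresonant region (where $\nabla_\eta$ of the phase is bounded below, integrate by parts in $\eta$ against the localizing weight, trading for the weighted norms (ii)), and a neighborhood of the space-time resonant set where neither works and one must use the measure/curvature of that set together with the extra $\brat$ decay already available. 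The paper's Sections \ref{sec:ideas} and \ref{sec:resonance} supply exactly the needed description of these phases and of the space-time resonant sets for the massive system, so I would invoke that classification and then run the three-region estimate case by case over the finitely many sign combinations (the $A_0$-component and the $\psi\bar\psi$-type quadratic terms in the wave equation each needing their own null/resonance bookkeeping). A Littlewood-Paley decomposition handles the high-frequency/high-regularity part, where $n$ large gives room to lose derivatives.

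Once the norm is closed globally, scattering is comparatively soft: for the Dirac part one shows $e^{-it(\alpha\cdot D+\gamma^0)}\psi(t)$ is Cauchy in $L^2$ as $t\to\infty$ because its time derivative is $e^{-it(\alpha\cdot D+\gamma^0)}(A_\mu\alpha^\mu\psi)$, whose $L^2$ norm is $\lesssim \|A_\mu(t)\|_{L^\infty}\|\psi(t)\|_{L^2}\lesssim \brat^{-3/2}\ve_0^2$, which is integrable in $t$; the limit defines $\psi^\infty$. For the Maxwell part one does the same with the first-order wave flow on $(A_\mu,\p_t A_\mu)$: the source $-\bra{\psi,\alpha_\mu\psi}$ has $L^2$ norm $\lesssim\|\psi(t)\|_{L^\infty}\|\psi(t)\|_{L^2}\lesssim\brat^{-2+\beta+\de}\ve_0^2$, again integrable, so $(A_\mu(t),\p_t A_\mu(t))$ converges in $\doth^1\times L^2$ to the data of a free wave $A_\mu^\infty$. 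Linearity of the limiting system is automatic since the nonlinearity has been shown to be an integrable perturbation.

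The main obstacle I expect is the space-time resonant region for the Dirac nonlinearity $A_\mu\alpha^\mu\psi$: here a Klein-Gordon factor interacts with a wave factor, the resonant set is not a point and the Dirac/Klein-Gordon phase $\bra{\xi}-\bra{\xi-\eta}\mp|\eta|$ degenerates on a hypersurface rather than a lower-dimensional set, so the naive integration-by-parts gains are lost. Controlling this forces the $\bra{x}^2$ (two derivatives of weight) rather than a single weight on $\psi$, and it is the reason the decay rate carries the small loss $\beta+\de$; making the bookkeeping of vector fields versus derivative losses consistent there — so that the weighted norm does not grow faster than the dispersive decay can absorb — is the delicate point. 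I would handle it exactly as in Sections \ref{sec:ideas}--\ref{sec:resonance}, exploiting the spinorial null structure from the Dirac projections (which annihilates the worst part of the symbol on the resonant set) to recover the missing decay.
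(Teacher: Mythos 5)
Your skeleton (bootstrap in weighted profile norms, pointwise decay derived from those norms, bilinear oscillatory integrals organized by space--time resonances, and the soft Cauchy-in-$L^2$ scattering step, which matches the paper's scattering subsection in Section \ref{sec:mainproof} almost verbatim) is the same as the paper's. The gap is in the central analytic step, and it is twofold. First, your description of the resonance geometry is wrong for the \emph{massive} system: as shown in Section \ref{sec:resonance}, the Dirac phase $p_\Theta(\xi,\eta)=\theta_0\bra{\xi}-\theta_1\bra{\xi-\eta}+\theta_2|\eta|$ vanishes only on $\{\eta=0\}$ (and only when $\theta_0=\theta_1$), while $|\nabla_\eta p_\Theta|\gtrsim\bra{\xi-\eta}^{-2}$ never vanishes, so $\mathcal S_{p_\Theta}=\emptyset$ and the space--time resonant set is empty; for $q_K$ everything degenerates only at $\{\xi=0\}$. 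There is no hypersurface of degeneracy and no ``neighborhood of the space--time resonant set'' requiring a measure/curvature argument. The real enemy is the low-frequency singularity ($|\eta|^{-1}$ or $|\xi|^{-1}$) created when one divides by the phase in the normal form, or by the $|D|^{-1}$ in the wave nonlinearity, combined with the time growths $s$ and $s^2$ coming from the weights $x$ and $x^2$.

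Second, your proposed cure --- exploiting the spinorial null structure of the Dirac projections to annihilate the symbol on the resonant set --- is precisely what the paper does \emph{not} (and says it cannot) rely on: the commutation identity $\al^j\Pi_\theta(\xi)=\Pi_{-\theta}(\xi)\al^j+\theta\,\xi_j/|\xi|$ produces Riesz-transform terms with no null structure regardless of the sign relations, and the introduction states explicitly that the proof does not depend on this structure. What actually closes the estimates is: (i) the fact that in the time-resonant case $\theta_0=\theta_1$ the multiplier generated by the weight, $\nabla_\xi p_\Theta$, itself vanishes like $|\eta|/(\bra{\xi}+\bra{\xi-\eta})$, cancelling the $|p_\Theta|^{-1}\sim|\eta|^{-1}$ singularity (the bound \eqref{eq:1st-dirac-multi}); (ii) the a priori assumption \eqref{eq:assumption-apriori} placing the Maxwell \emph{profile} in homogeneous weighted spaces ($\|xF_{\mu,\kappa}\|_{\dot H^1}$, $\|x^2F_{\mu,\kappa}\|_{\dot H^2}$ with slow time growth), which absorbs the remaining zero-frequency singularities of the Maxwell part; and (iii) the almost optimal $t^{-2+}$ decay of the spinor together with Coifman--Meyer bounds (Lemma \ref{lem:coif-mey}), the time-derivative bounds of Lemma \ref{lem:time-derivative}, and dyadic frequency restrictions to recover the powers of $s$ after integrating by parts in time and in $\eta$. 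Without specifying these ingredients --- in particular the homogeneous weighted norms for $F_{\mu,\kappa}$ and the vanishing of $\nabla_\xi p_\Theta$ in the resonant sign case --- your three-region scheme does not close, so the proposal as written leaves the decisive estimates unproved.
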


In view of \eqref{thm:decay}, we have pointwise decay of the Maxwell component at the optimal rate of $t^{-\frac32}$, whereas the Dirac component decays almost optimal at a rate of $t^{-2+}$. See Remark \ref{rem:optimal-timedecay} and Propositions \ref{prop:timedecay} and \ref{prop:timedecay-maxwell}. Concerning scattering results, the function space can be replaced with $H^n \times \doth^n \times \doth^{n-1}$, instead of $L^2 \times \doth^1 \times L^2$. See Remark \ref{rem:scattering}.

\begin{rem}
	There are various candidates of choice of the gauge condition. If we impose Coulomb gauge condition $\p^j A_j =0$,
	 Maxwell-Dirac system \eqref{md} is rewritten by
	\begin{align}
		\left\{ \begin{aligned}(-i\partial_t +  \alpha \cdot D  + m\gam^0)\psi &=  A_\mu\alpha^\mu \psi\quad\mathrm{in}\;\;\mathbb{R}^{1+4},\\
			\Delta A_{0} & = -|\psi|^2,\\
			\square A_{\mu} & = -\bra{\psi , \alpha_\mu \psi }.
		\end{aligned}
		\right.\label{maineq:md-coulomb}
	\end{align}
Then the smallness assumption  may guarantee the smallness of $A_0$. The main argument of this paper can be applied for $A_j$'s in \eqref{maineq:md-coulomb} by setting the same a priori assumption to \eqref{eq:assumption-apriori}. Therefore, we may anticipate that the global existence and linear scattering results for \eqref{maineq:md-coulomb} will be proven.
\end{rem}

\begin{rem}
	In \eqref{thm:decay}, the parameters $\beta$ and $\de$ are depending on regularity $n$. sufficiently large number $n$ implies the almost optimal rate of Dirac operator $t^{-2+}$. See Remark \ref{rem:parameter} for more precise conditions of $\beta$ and $\de$.
\end{rem}

Thanks to the time decay effect, which \emph{massive} Dirac operator give, Theorem \ref{mainthm} shows the linear scattering results for the \emph{massive} \eqref{md} even if we impose Coulomb gauge condition, whereas  the \emph{massless} \eqref{md} exhibited the modified scattering
in \cite{gaoh}. We note that our main results is the first result establishing the linear scattering phenomenon for \eqref{md}.

\subsection{Main idea}\label{sec:ideas} 
The proof of the global solution and scattering results is based on a bootstrap argument under the suitable a priori assumption with the weighted energy estimates. To this end, we use the space-time resonant argument, developed by Germain-Masmoudi-Shatah \cite{gemasha2008,gemasha2012-annals,gemasha2012-jmpa}.By employing Duhamel's formula, the nonlinear terms of \eqref{maineq:md-lorenz} give rise to several oscillatory integrals, with the stationary points of the phase function dictating their large-time behaviors.

Our analysis begins with addressing the space, time, and space-time resonance of \textit{phases} originating from the Dirac spinor and wave operator within the oscillatory integrals, arising from Dirac and wave propagators. Specifically, through a standard reformulation with the Dirac projection operator and first-order wave equations, we obtain the system of the following nonlinearities $\mathbf{f}_{\Theta}$ and $\mathbf{F}_{\mu,K}$ for the Dirac and Maxwell parts, respectively:
\begin{align*}
	\mathbf f_{\,\Theta}(t, \xi) &:= \int_0^t\!\!\int_{\R^4} e^{is\,p_\Theta(\xi, \eta)  }\Pi_{\theta_0}(\xi)\widehat{ F_{\mu, \theta_2}}(s, \eta) \alpha^\mu \widehat{ f_{\theta_1}}(s, \xi-\eta)\,d\eta ds,\\
	\mathbf F_{\mu,K}(t, \xi) &:= |\xi|^{-1}\int_{0}^{t}\!\!\int_{\R^4} e^{is q_K(\xi,\eta)} \bra{\widehat{f_\kat}(s,\eta),\alpha_\mu\wh{f_\kao}(s,\xi+\eta)}\, d \eta ds,\\
	p_\Theta(\xi, \eta) &:= \theta_0 \bra{\xi} - \theta_1\bra{\xi-\eta} + \theta_2|\eta|,\\
	q_K(\xi, \eta) &:= -\ka_0 |\xi| - \ka_1 \bra{\xi+\eta}  + \ka_2\bra{\eta},
\end{align*}
for 3-tuples $\Theta = (\thez, \theo, \thet)$ and $K=(\kaz,\kao,\kat)$ where $\thej, \ka_j \in \{+,-\}\,(j=0,1,2)$  (see Section \ref{sec:reformul} for the rigorous derivation). The functions $p_\Theta$ and $q_K$ are commonly referred to as \emph{phases}. These phases exhibit various sign relations, requiring consideration of different resonance cases. While the phases introduce space-time resonance, the nonlinearities \eqref{md} lack sufficient null structures to eliminate these resonances. As observed in \cite{anfosel2010,huhoh2016,gaoh}, the interaction between projection operators and Dirac matrices $\al_j$ induces sign-changed projection parts and Riesz transform parts:
\begin{align*}
	\al^j \Pi_{\theta}(\xi) =  \Pi_{-\theta}(\xi)\al^j + \theta \frac{\xi_j}{|\xi|}.
\end{align*}
 Depending on the sign relation, cases that require null structures emerge, while the interaction generates terms without null structures, regardless of the sign relation.

To overcome this lack of null structure, we impose a homogeneous Sobolev norm in the a priori assumption of the Maxwell part. This assumption provides as a null structure in the estimates for the Maxwell part. However, it also leads to singularities in the estimates of the Dirac part. To handle these singularities, we exploit the almost optimal time decay effect of the Dirac spinor and the dimensional advantage.

As we have observed in numerous previous works, the relationship of \eqref{md} to (MKG) becomes apparent. The emergence of the Dirac equation can be traced back to an endeavor to find the square root of the Klein-Gordon equation, to formulate an equation that is first-order in time. Consequently, squaring the Dirac component of the system yields an equation that bears a resemblance to the Klein-Gordon segment of (MKG). Nevertheless, as pointed out in \cite{anfosel2010}, this idea appears to have limited applicability, as squaring the Dirac equation results in the loss of much of its spinorial null structure. It is worth noting, however, that our proof in this paper does not depend on exploiting this null structure. Consequently, our approach can be extended to encompass the \emph{massive} (MKG).

\begin{rem}
	It remains still open to obtain the global existence and asymptotic behavior of \eqref{md} in (1+3) dimension. The extension of these results to three dimensions is quite limited. Since the lower dimension implies the lower time decay effect, $L^2$--norm of nonlinearities are not integrable in time on $\R^{1+3}$. In view of the modified scattering results \cite{ancooka2017,CKLY2022,cloos}, then we anticipate that the global solution to \eqref{md} in $\R^{1+3}$ exhibits a long range behavior which requires suitable phase modification. 
\end{rem}

\medskip

\noindent\textbf{Outlines of this paper.} In Section \ref{sec:pre}, we introduce notations and reformulate \eqref{md} with Lorenz gauge condition \eqref{gauge:lorenz}. We also identify space, time, and space-time resonant sets for various sign relations and assume a priori assumption for our main proof. In the end of this section, we give the time decay estimates for Dirac and Maxwell equations under the a priori assumption. In Section \ref{sec:useful}, we introduce some useful lemmas, used throughout the paper. Then we prove our main theorem  in Section \ref{sec:mainproof}. We show the existence of a global solution to \eqref{md} under Lorenz gauge condition by the bootstrap argument, as well as scattering of the global solution.  In Sections \ref{sec:1st-dirac}--\ref{sec:2nd-esti-maxwell}, we close the a priori  assumption by using the space-time resonance argument.

\section{Preliminaries}\label{sec:pre}

\noindent{\bf Notations.}
\ \\
$(1)$ $\bra{\cdot}$ denotes $(1+ |\cdot|^2)^{\frac12}$. We also denote $\bra{D} := \mathcal F^{-1}(\bra{\xi})$ and $|D| := \mathcal F^{-1}(|\xi|)$.\\
\ \\
$(2)$ (Mixed-normed spaces) For a Banach space $X$ and an interval $I$, $u \in L_I^q X$ iff $u(t) \in X$ for a.e. $t \in I$ and $\|u\|_{L_I^qX} := \|\|u(t)\|_X\|_{L_I^q} < \infty$. Especially, we abbreviate $L^p=L_x^p$ for the spatial norm and indicate the subscripts for only Fourier space norm. \\

\noindent$(3)$ (Sobolev spaces)  Let $s \in \R$. We define homogeneous Sobolev spaces $\|u\|_{\doth^s} := \||D|^s u\|_{L^2}$.  For inhomogeneous spaces, we define $\|u\|_{H^s} := \|\bra{D}^s u\|_{L^2}$. Moreover, $\|u\|_{W^{s,p}} := \|\bra{D}^s u\|_{L^p}$ for $p\neq 2$.\\

\noindent$(4)$ Different positive constants depending only on $n$, $\ve_0$ are denoted by the same letter $C$, if not specified. $A \lesssim B$ and 
$A\gtrsim B $ mean that $A \le CB$ and $A \geq  C^{-1}B$  respectively for some $C>0$. $A \sim B$  means  that $A \lesssim B$ and $A \gtrsim B$.\\

\noindent$(5)$ (Littlewood-Paley operators) Let $\rho$ be a
Littlewood-Paley function such that $\rho\in C_{0}^{\infty}(B(0,2))$
with $\rho(\xi)=1$ for $|\xi|\le1$ and define $\rho_{N}(\xi):=\rho\left(\frac{\xi}{N}\right)-\rho\left(\frac{2\xi}{N}\right)$
for $N\in2^{\mathbb{Z}}$. Then we define the frequency projection
$P_{N}$ by $\mathcal{F}(P_{N}f)(\xi)=\rho_{N}(\xi)\widehat{f}(\xi)$,
and also $\rho_{\le N_{0}}:=1-\sum_{N>N_{0}}\rho_{N}$. For $N\in2^{\mathbb{Z}}$
we denote $\widetilde{\rho_{N}}=\rho_{N/2}+\rho_{N}+\rho_{2N}$. In
particular, $\widetilde{P_{N}}P_{N}=P_{N}\widetilde{P_{N}}=P_{N}$
where $\widetilde{P_{N}}=\mathcal{F}^{-1}\widetilde{\rho_{N}}\mathcal{F}$.
Especially, we denote $P_{N}f$ by $f_{N}$ for any measurable function
$f$.\\

\noindent$(6)$ For the dyadic numbers $ N_j, N_k, N_\ell \in 2^{\Z}$, we denote $\max( N_j, N_k, N_\ell)$ and $\min( N_j, N_k, N_\ell)$ by $N_{jk\ell}^{\max}$ and $N_{jk\ell}^{\min}$, respectively.\\

\noindent$(7)$ Let $\textbf{A}=(A_i), \textbf{B}=(B_i) \in \R^n$. Then $\textbf{A} \otimes \textbf{B}$ denotes the usual tensor product such that $(\textbf{A} \otimes \textbf{B})_{ij} = A_iB_j$. We also denote a tensor product of $\textbf{A} \in \C^n$ and $\textbf{B} \in \C^m$  by a matrix $\textbf{A} \otimes \textbf{B} = (A_iB_j)_{\substack{i=1,\cdots,n\\i=1,\cdots,m}}$. For simplicity, we use the simplified notation
$$
[\mathbf A]^k = \overbrace{\mathbf A \otimes \cdots \otimes \mathbf A}^{k\; \text{times}},\qquad \nabla^k = \overbrace{\nabla \otimes \cdots \otimes \nabla}^{k\; \text{times}}.
$$
The product of  $\mathbf A$ and $f \in \C^n$ is given by $\mathbf A f = \mathbf A \otimes f$.

\subsection{Set up for Maxwell-Dirac system under Lorenz gauge condition}\label{sec:reformul} We decompose the Dirac spinor field $\psi$ into half waves, i.e., $\psi_+$ and $\psi_-$.
To do this we define projection operators $\Pi_{\theta}(D)$ for $\theta \in \{+, -\}$
by
\[
\Pi_{\theta}(D) :=\frac{1}{2}\left( I_4 + \theta \frac{\alpha \cdot  D + \gam^0}{\bra{D}}\right).
\]
We denote the symbol of $\Pi_\theta(D)$ by $\Pi_\theta(\xi)$.  Then we have
\[
\Pi_\theta(D) + \Pi_{-\theta}(D) = I_4, \quad \Pi_\theta(D) \Pi_{-\theta}(D) = 0, \quad \mbox{ and }\quad [\Pi_\theta(D)]^2 = \Pi_\theta(D).
\]
Let $\psi_\theta = \Pi_\theta(D) \psi$ for $\theta \in \{+,-\}$. Then we obtain the following decoupled equations from the Dirac part of \eqref{maineq:md-lorenz}:
\begin{align}\label{eq:half-dirac}
\left\{
\begin{array}{l}
(-i\partial_t + \theta\bra{D})\psi_\theta = \Pi_\theta(D)\big(A_\mu \alpha^\mu \psi\big),\\
\psi_\theta(0):= \psi_{0, \theta}.
\end{array}\right.
\end{align}
 Note that $\psi = \psi_+ + \psi_-$.

We now decompose gauge field as $A_\mu = A_{\mu,+} + A_{\mu,-}$ with
$$
A_{\mu, \ka} = \frac12\Big(1 + \ka |D|^{-1}(-i\partial_t)\Big)A_\mu,
$$
for $\ka \in \{+, -\}$. Then the Maxwell part of \eqref{maineq:md-lorenz} is rewritten as the following equations:
\begin{align}\label{eq:half-maxwell}
\left\{\begin{array}{l}
(i\partial_t + \ka|D|)A_{\mu, \ka} = \ka\frac{1}2|D|^{-1}\bra{\psi, \alpha_\mu \psi},\\
A_{\mu, \ka}(0) := a_{\mu, \ka} := \frac12(a_{ \mu} - \ka i|D|^{-1}\dot a_{ \mu}).
\end{array}\right.
\end{align}
Therefore \eqref{maineq:md-lorenz} becomes that
\begin{align}\label{eq:maineq-half}
	\left\{\begin{aligned}
		(-i\partial_t + \theta\bra{D})\psi_\theta &= \Pi_\theta(D)\big(A_\mu \alpha^\mu \psi\big),\\
		(i\partial_t + \ka|D|)A_{\mu, \ka} &= \ka\frac{1}2|D|^{-1}\bra{\psi, \alpha_\mu \psi},
	\end{aligned}\right.
\end{align}
with initial data $(\psi_\theta(0), A_{\mu,\ka}(0)) = (\psi_{0,\theta}, a_{\mu,\ka})$, for $\theta, \ka \in \{+,-\}$.

Now by Duhamel's principle, \eqref{eq:maineq-half} can be converted into
\begin{align}
\psi_{\theta_0}(t) &= e^{-\theta_0 it\brad} \psi_{0, \theta_0}  +   i \sum_{\theta_1, \theta_2 \in \{\pm \}}\int_0^t e^{-\theta_0 i(t-s)\brad}\Pi_{\theta_0}(D)\big(A_{\mu, \theta_2} \alpha^\mu \psi_{\theta_1}\big)(s)\,ds, \label{eq:duhamel-dirac}\\
A_{\mu, \ka_0}(s) &= e^{\ka_0is|D|}a_{\mu, \ka_0} -\ka_0 \frac{i}2\sum_{\ka_1, \ka_2 \in \{\pm\}}\int_{0}^{s} e^{\ka_0i(t-s)|D|}|D|^{-1}\bra{\psi_{\ka_1},\alpha_\mu\psi_{\ka_2}}(s)\,ds.  \label{eq:duhamel-maxwell}
\end{align}

To keep track of the scattering state we define propagator-adapted fields $f_{\theta}$ and $F_{\mu, \ka}$ by
\begin{align}\label{eq:interaction}
f_\theta(t) = e^{\theta i t \brad} \psi_\theta(t) \;\;\mbox{ and }\;\; F_{\mu, \ka}(t) = e^{-\ka it|D|}A_{\mu, \ka}(t).	
\end{align}
Then by acting propagators on both sides of \eqref{eq:duhamel-dirac} and \eqref{eq:duhamel-maxwell}, respectively, we have
\begin{align}
f_{\theta_0}(t) &= \psi_{0, \theta_0} + i\sum_{\theta_1, \theta_2 \in \{\pm \}}\int_0^t e^{\theta_0 is\brad}\Pi_{\theta_0}(D)\big(A_{\mu, \theta_2} \alpha^\mu \psi_{\theta_1}\big)(s)\,ds, \label{eq:profile-dirac} \\
F_{\mu, \ka_0}(t) &= a_{ \mu, \ka_0} - \ka_0 \frac{i}2\sum_{\ka_1, \ka_2 \in \{\pm \}} \int_{0}^{t} e^{-\ka_0is|D|}|D|^{-1}\bra{\psi_{\ka_1},\alpha_\mu\psi_{\ka_2}}(s)\,ds,\label{eq:profile-maxwell}
\end{align}
for $\thez, \kaz \in \{+,-\}$. By taking Fourier transform, one gets the frequency representation as follows:
\begin{align*}
\widehat f_{\theta_0}(t, \xi) &=  \widehat {\psi_{0, \theta_0}} + i\sum_{\theta_1, \theta_2 \in \{ \pm \}}\mathbf f_{\,\Theta}(t, \xi), \\
\widehat{ F_{\mu, \ka_0}}(t, \xi) &= \widehat{a_{ \mu, \ka_0}}(\xi) - \ka_0\frac{i}2 \sum_{\ka_1, \ka_2 \in \{ \pm \}} \mathbf F_{\,\mu, K}(t, \xi),
\end{align*}
where
\begin{align}
\mathbf f_{\,\Theta}(t, \xi) &:= \int_0^t\!\!\int_{\R^4} e^{is\,p_\Theta(\xi, \eta)  }\Pi_{\theta_0}(\xi)\widehat{ F_{\mu, \theta_2}}(s, \eta) \alpha^\mu \widehat{ f_{\theta_1}}(s, \xi-\eta)\,d\eta ds,\label{eq:interaction-dirac}\\
\mathbf F_{\,\mu,K}(t, \xi) &:= |\xi|^{-1}\int_{0}^{t}\!\!\int_{\R^4} e^{is q_K(\xi,\eta)} \bra{\widehat{f_\kat}(s,\eta),\alpha_\mu\wh{f_\kao}(s,\xi+\eta)}\, d \eta ds,\label{eq:interaction-maxwell}\\
p_\Theta(\xi, \eta) &:= \theta_0 \bra{\xi} - \theta_1\bra{\xi-\eta} + \theta_2|\eta|,\label{eq:phase-dirac}\\
q_K(\xi, \eta) &:= -\ka_0 |\xi| - \ka_1 \bra{\xi+\eta}  + \ka_2\bra{\eta},\label{eq:phase-maxwell}
\end{align}
for 3-tuples $\Theta = (\thez, \theo, \thet)$ and $K=(\kaz,\kao,\kat) $. We call $p_\Theta, q_K$ the \textit{phase}.

\subsection{Space-time resonance for Maxwell-Dirac system}\label{sec:resonance} We proceed our proof by a bootstrap argument based on space-time resonance argument 
\cite{gemasha2008,gemasha2012-jmpa,gemasha2012-annals} and \cite{gunakatsa2009}, independently, which is our frame work. In this section we investigate a time, space, and space-time resonances for \emph{massive} Maxwell-Dirac system, respectively. For the time resonant set of wave and Klein-Gordon type systems, we refer to \cite{iopau2019,book:iopau2022}. We also refer to \cite{pusa,canher2018-analpde} for the observation of Dirac operator. According to the definition in \cite{gemasha2008}, we define time, space, and space-time resonant sets as follows:

\begin{align*}
	\begin{aligned}
		\mathcal T_{p_\Theta} &:= \left\{ (\xi,\eta) : p_\Theta(\xi,\eta)=0\right\},\\
		\mathcal S_{p_\Theta} &:= \left\{ (\xi,\eta) : \nabla_\eta p_\Theta(\xi,\eta)=0\right\},\\
		\mathcal R_{p_\Theta} &:= \mathcal T_{p_\Theta} \cap \mathcal S_{p_\Theta},
	\end{aligned}\hspace{2cm} \begin{aligned}
	\mathcal T_{q_K} &:= \left\{ (\xi,\eta) : q_K(\xi,\eta)=0\right\},\\
	\mathcal S_{q_K} &:= \left\{ (\xi,\eta) : \nabla_\eta q_K(\xi,\eta)=0\right\},\\
	\mathcal R_{q_K} &:= \mathcal T_{q_K} \cap \mathcal S_{q_K}.
	\end{aligned}
\end{align*}
It is difficult to examine precisely elements of these sets. In view of these resonant sets, it suffices to observe the lower bounds of $p_\Theta, \nabla_\eta p_\Theta$ and $q_\Theta, \nabla_\eta q_\Theta$, respectively.

Let us observe that $p_{\Theta}(\xi, \eta)$ never vanishes, when $\xi\neq 0$ and $\eta \neq 0$, for any sign $\theta_0,\theo,\thet \in \{+, -\}$. In fact, if for some $\xi, \eta$, $p_{\Theta}(\xi, \eta) = 0$, then
$$
(\theta_0\bra\xi + \theta_2|\eta|)^2 = 1 + |\xi -\eta|^2.
$$
This implies that
$$
\braxi|\eta| = |\xi \cdot \eta|\;\;\mbox{and hence}\;\; \braxi \le |\xi|,
$$
which is a contradiction. Hence we have
\begin{align*}
	|p_\Theta(\xi, \eta)| &=  \left|\theta_0\braxi - \theta_1\bra{\xi-\eta} + \theta_2|\eta|\right| =  \left|\frac{(\theta_0\braxi + \theta_2|\eta|)^2 - \bra{\xi-\eta}^2 }{\theta_0\braxi  + \theta_1\bra{\xi-\eta} + \theta_2|\eta| }\right|\\
	&=  \left|\frac{2\theta_0\theta_2\braxi|\eta| + 2 \xi \cdot \eta }{\theta_0\braxi  + \theta_1\bra{\xi-\eta} +\theta_2|\eta|  }\right|.
\end{align*}
If $\theta_0 = \theta_1$, then
\begin{align}\label{eq:resonance-time}\begin{aligned}
		|p_\Theta(\xi,\eta)| &\ge \frac{2|\eta|(\braxi - |\xi|)}{\braxi + \bra{\xi-\eta} + |\eta|}\gtrsim \frac{|\eta|}{\bra{\xi}(\bra{\xi}+\bra{\xi-\eta}+
			\bra{\eta})}.
\end{aligned}\end{align}
If $\theta_0 \neq \theta_1, \theta_0 \neq \theta_2$, then
\begin{align*}
	|p_\Theta(\xi,\eta)| \ge \frac{2|\eta|(\braxi - |\xi|)}{|\braxi - \bra{\xi-\eta} - |\eta||} \gtrsim (\braxi - |\xi|) \gtrsim \bra{\xi}^{-1}.
\end{align*}
Also if $\theta_0 \neq \theta_1, \theta_0 = \theta_2$, then trivially $|p_\Theta(\xi,\eta)| \ge \bra{\xi} +\bra{\xi-\eta}$. Then we have, for $\thez \neq \theo$,
\begin{align}\label{eq:nonresonance-time}
	|p_\Theta(\xi,\eta)|  \gtrsim \bra{\xi}^{-1}.
\end{align}
Therefore $\mathcal{T}_{p_\Theta} = \{\eta=0\}$ when $\thez =\theo$ and $\mathcal {T}_{p_\Theta} = \emptyset$, otherwise.

Let us observe the space resonance of \eqref{eq:interaction-dirac}. For any sign cases, there is no space resonant as follows:
\begin{align}\label{eq:nonresonance-space}
	\left| \nabla_\eta p_\Theta (\xi,\eta)\right| = \left| \theo\frac{\xi-\eta}{\bra{\xi-\eta}} - \thet \frac{\eta}{|\eta|}\right| \gtrsim 1 - \frac{|\xi-\eta|}{\bra{\xi-\eta}} \gtrsim \bra{\xi-\eta}^{-2}.
\end{align}
Therefore we deduce that  $\mathcal{S}_{p_\Theta} = \emptyset$,  for any sign relations, which implies  $\mathcal{R}_{p_\Theta} = \emptyset$.

By virtue of a similarity between $p_\Theta(\xi,\eta)$ and $q_K(\xi,\eta)$, we readily see that
\begin{align}\label{eq:resonance-time-maxwell}
	|q_K(\xi,\eta)| \gtrsim \begin{cases}
		\frac{|\xi|}{\bra{\eta}(\bra{\xi}+\bra{\xi-\eta} +\bra{\eta})}  & \mbox{ for } \kao = \kat,\\
		\bra{\eta}^{-1}  & \mbox{ otherwise}.
	\end{cases}
\end{align}
and
\begin{align}\label{eq:resonance-space-maxwell}
	|\nabla_\eta q_K(\xi,\eta)| \gtrsim \begin{cases}
		\frac{|\xi|}{\bra{\xi+\eta}^3}  & \mbox{ for } \kao = \kat,\\
		\left|\frac{\xi+\eta}{\bra{\xi+\eta}} + \frac{\eta}{\bra{\eta}} \right|  & \mbox{ otherwise}.
	\end{cases}
\end{align}
Note that space resonance of $q_K$ is different from that of $p_\Theta$. Therefore, we have $\mathcal T_{q_K} = \mathcal S_{q_K} =\mathcal R_{q_K} = \{\xi=0\}$ when $\kao =\kat$. 
On the other hand, when $\kao \neq \kat$, 
$\mathcal T_{q_K} = \emptyset$,  $\mathcal S_{q_K}= \{\xi=2\eta\}$, and $\mathcal R_{q_K}= \emptyset$.

\subsection{A priori assumption} As we discussed in introduction, we set a priori assumption in this section for the bootstrap argument.
For $ \ve_{1}>0$ to be chosen
later, we assume a priori smallness of solutions: for some $0<\beta<\de$,
a given time $T>0$, a priori assumptions of Dirac part and Maxwell part are suggested by
\begin{align}\label{eq:assumption-apriori}
	\begin{aligned}
		\|\psi\|_{\Sigma_{T}^\textbf{D}}+\|A_{\mu}\|_{\Sigma_{T}^\textbf{M}} \les\ve_{1},
	\end{aligned}
\end{align}
where
\begin{align*}
	\begin{aligned}
		\|\psi\|_{\Sigma_{T}^\textbf{D}}&:=\|\psi_{+}\|_{\Sigma_{T,+}^{\textbf{D}}} + \|\psi_{-}\|_{\Sigma_{T,-}^\textbf{D}}\\
		\|\psi_{\theta}\|_{\Sigma_{T,\theta}^\textbf{D}} & :=\sup_{t\in[0,T]}\left[\|\psi_{\theta}(t)\|_{H^n}+ \left\|x f_\theta(t) \right\|_{L^2} +\bra{t}^{-\de}\left\|x^{2}f_\theta(t)\right\|_{L^2}  \right],
	\end{aligned}
\end{align*}
and
\begin{align*}
	\begin{aligned}
		\|A_{\mu}\|_{\Sigma_{T}^\textbf{M}}&:=\|A_{\mu, +}\|_{\Sigma_{T,+}^\textbf{M}} + \|A_{\mu, -}\|_{\Sigma_{T,-}^\textbf{M}}\\
		\|A_{\mu, \kappa}\|_{\Sigma_{T,\kappa}^\textbf{M}} &:= \sup_{t\in[0,T]}\left[\bra{t}^{-\frac\beta2 + \frac\de2}\|A_{\mu,\kappa}(t)\|_{{H}^{n}} + \bra{t}^{-\frac\beta2 + \frac\de2}\left\|x F_{\mu, \kappa}(t)\right\|_{\dot H^1}+ \bra{t}^{-\frac18}\left\|x^{2}F_{\mu, \kappa}(t)\right\|_{\dot H^2}  \right].	
		\end{aligned}
\end{align*}

\begin{rem}\label{rem:parameter}
	As mentioned in Introduction, we do not pursue the optimality of regularity condition $n$ and optimal smallness of $\beta, \de$. In our analysis the following conditions are required:	
	\begin{align}\label{eq:condi-parameter}
	 0< \beta < \de, \;\; \frac{56}{n-2} \le \beta,\;\mbox{ and }\;  \beta+\de \le \frac14, 
	\end{align}
	 According to these conditions, $(n,\beta,\de) = (500, \frac4{35},\frac9{70})$ is an one of the parameter candidates. Moreover, we anticipate that it is possible to make the regularity condition $n$ lower by imposing the regularity on the weighted Sobolev norm assumptions \eqref{eq:assumption-apriori} or dividing frequencies elaborately in energy estimates. For the sake of the simplicity of the energy estimates,  we do not treat that.	  
\end{rem}

\begin{rem}
	For the extension of our method, we refer to the asymptotic behavior results of the dispersive and wave system  \cite{hanipusha2013,pusasha2013,iopau2014}.
\end{rem}

\subsection{Time decay estimates}

\begin{prop}[Time decay of amplitudes]\label{prop:timedecay}
	Let $\theta \in \{+,-\}$. Assume
	that $\psi$ satisfies a priori assumption \eqref{eq:assumption-apriori},
	for given $\ve_{1}$ and $T$.  Then there exists $C$
	satisfying that for $0 \le k \le 20$,
	\begin{align}
		\|\psi_{\theta}(t)\|_{W^{k,\infty}}&\le C\bra{t}^{-2 +\beta+\de}\ve_{1},\label{eq:timedecay-d}
	\end{align}
	for any $t \in [0, T]$, where $\beta,\delta$ are in \eqref{eq:assumption-apriori}.
\end{prop}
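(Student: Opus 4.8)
The plan is to derive the $L^\infty$ decay from the weighted-$L^2$ information encoded in the a priori assumption \eqref{eq:assumption-apriori}, using a stationary-phase / dispersive estimate for the Klein-Gordon propagator $e^{-\theta it\bra{D}}$ in $\R^4$. First I would reduce to the profile: since $\psi_\theta(t) = e^{-\theta it\bra{D}} f_\theta(t)$ and $\bra{D}^k$ commutes with the propagator, it suffices to bound $\|e^{-\theta it\bra{D}} \bra{D}^k f_\theta(t)\|_{L^\infty}$, and because $k \le 20 \ll n$ the function $g := \bra{D}^k f_\theta(t)$ still satisfies $\|g\|_{L^2} \lesssim \ve_1$, $\|xg\|_{L^2}\lesssim \ve_1$ (after commuting $x$ past $\bra{D}^k$, which costs only lower-order $\bra{D}^{k-1}$ terms still controlled by $H^n$), and $\|x^2 g\|_{L^2}\lesssim \bra{t}^\de \ve_1$. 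So the task is a single linear dispersive estimate: for the $4$-dimensional Klein-Gordon flow, $\|e^{-\theta it\bra{D}} g\|_{L^\infty} \lesssim \bra{t}^{-2}(\|g\|_{L^2} + \|x^2 g\|_{L^2})$ type bound, possibly with a Littlewood-Paley decomposition to handle low vs. high frequencies.

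The key steps, in order: (i) write $e^{-\theta it\bra{D}}g(x) = \int e^{i(x\cdot\xi - \theta t\bra\xi)} \wh g(\xi)\,d\xi$ and split dyadically in $|\xi|$ via $P_N$; (ii) on each piece apply the standard Klein-Gordon stationary phase bound $\|e^{-\theta it\bra{D}} P_N h\|_{L^\infty} \lesssim \bra{t}^{-d/2}\,\bra{N}^{c} N^{d/2}\,\| \wh{P_N h}\|_{?}$ — more precisely I would use the dispersive decay $\|e^{it\bra{D}}P_N h\|_{L^\infty}\lesssim \min(N^d, \bra t^{-d/2} N^{d/2+?}) \|h\|_{L^1}$ combined with the $L^2$-based refinement obtained by integrating by parts in $\xi$ twice, which is exactly where the weights $x$ and $x^2$ enter (each $\xi$-derivative falling on $\wh g$ becomes a power of $x$ on $g$); (iii) sum the dyadic pieces, using $\|g\|_{L^2}+\|xg\|_{L^2}$ for low frequencies and the $H^n$ norm with $n$ large to absorb the $\bra N^c$ losses at high frequencies, so that the geometric series converges; (iv) collect the $\bra{t}^{-2}$ from the $d=4$ stationary phase and the extra $\bra{t}^{\de}$ coming from the $\|x^2 g\|_{L^2}\lesssim\bra t^\de\ve_1$ bound, plus a $\bra t^\beta$-type slack if the interpolation between the $xg$ and $x^2 g$ weights is organized with a small parameter, to land at $\bra{t}^{-2+\beta+\de}\ve_1$. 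Here the role of $\beta$ is presumably a Sobolev-embedding / interpolation loss $N^{\beta}$ traded against the regularity budget $n$ via the condition $\frac{56}{n-2}\le\beta$ in \eqref{eq:condi-parameter}.

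The main obstacle I expect is the careful bookkeeping in step (ii)–(iii): the Klein-Gordon phase $\theta t\bra\xi$ has Hessian that degenerates (becomes flat) as $|\xi|\to\infty$, so the naive $\bra t^{-2}$ decay comes with a high-frequency loss of the form $\bra N^{c}$ for some $c>0$ (coming from the $\langle\xi\rangle$-weights in $\nabla_\xi\bra\xi$ and $\nabla_\xi^2\bra\xi$); one must check that two integrations by parts suffice in dimension $4$ to reach the weight $x^2$ (and not need $x^3$), that the resulting high-frequency loss is summable against the $H^n$-bound, and that the weighted norms survive commutation with $\bra D^k$ and with the cutoffs $\rho_N$. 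A secondary, more routine point is handling the frequency region $|\xi|\lesssim 1/\bra t$ (or $|t\bra\xi|\lesssim 1$), where stationary phase gives nothing and one instead uses Bernstein: $\|P_{\le 1/\bra t}\, e^{-\theta it\bra D}g\|_{L^\infty}\lesssim \bra t^{-2}\|g\|_{L^2}$ by volume of the frequency support, which is clean in $\R^4$. Once the linear estimate is in place, feeding in the three quantities from $\|\psi_\theta\|_{\Sigma^{\mathbf D}_{T,\theta}}\lesssim\ve_1$ finishes the proof.
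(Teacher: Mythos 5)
Your proposal is correct and follows essentially the same route as the paper: the paper writes $\psi_\theta$ as the oscillatory integral $\int e^{it(-\theta\bra\xi+\xi\cdot x/t)}\bra{\xi}^k\widehat{f_\theta}\,d\xi$, performs a Littlewood--Paley splitting with low cutoff $N\le t^{-1}$ (your Bernstein region) and high cutoff $N\ge t^{2/(n-2)}$ absorbed by the $H^n$ bound, and on the middle range integrates by parts twice in $\xi$ so that the weights $\|xf_\theta\|_{L^2}$ and $\|x^2f_\theta\|_{L^2}\lesssim\bra t^{\de}\ve_1$ enter, with the $\bra N^{c}$ losses from the degenerate phase summed against the $t^{\beta}$ slack exactly as you anticipate via $\tfrac{56}{n-2}\le\beta$. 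The only detail you leave implicit — the region around the stationary point $\xi_0=-\theta x/\sqrt{t^2-|x|^2}$, which the paper handles by a further dyadic decomposition in $|\xi-\xi_0|$ down to scale $L_0\sim t^{-1}$ and a Cauchy--Schwarz volume bound on the innermost piece — is part of the "standard stationary phase" step you invoke, so there is no substantive gap.
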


\begin{rem}\label{rem:optimal-timedecay}
	In view of the parameter condition \eqref{eq:condi-parameter}, for arbitrary small $\beta,\de>0$, we can choose the regularity $n$. Indeed, the global bound results \eqref{eq:timedecay-d} is almost optimal under the sufficiently high regularity condition.
\end{rem}

\begin{proof} Let us first consider the \eqref{eq:timedecay-d}. We write
\[
\psi_{\theta}(t,x) = \frac1{(2\pi)^4}\int_{\mathbb{R}^{4}}e^{it\phi_{\theta}(\xi)}\widehat{f_{\theta}}(\xi)d\xi,
	\]
	where
	\begin{align}
		\phi_{\theta}(\xi):={-\theta}\bra{\xi}+\xi\cdot\frac{x}{t}.\label{theta-phase}
	\end{align}
If $t \les 1$, then by H\"older's inequality we have
\begin{align*}
\left|\bra{D}^k\psi_{\theta}(t,x)\right| &\les  \int_{\R^4}\bra{\xi}^k |\widehat{f_{\theta}}(\xi)|d\xi \les  \left\|\bra{\xi}^{k-n}\right\|_{L^2}\|\psi_\theta\|_{H^n} \les \|\psi\|_{\Sigma_{T}^\textbf{D}}.
\end{align*}

Hence it suffices to show that
	\begin{align*}
		\begin{aligned} &\|\psi_{\theta}(t)\|_{W^{k,\infty}} \les  t^{-2+\beta+\de} \ve_1.
		\end{aligned}
	\end{align*}
for $t \gg 1$.	To this end, we take a frequency decomposition of $\psi_{\theta}$ such that
	\[
	\psi_{\theta}(t,x)=\sum_{N\in2^{\mathbb{Z}}}\int_{\mathbb{R}^{4}} e^{it\phi_\theta(\xi)}\rho_{N}(\xi) \widehat{f_{\theta}}(\xi)d\xi.
	\]
	Then we get
	\begin{align*}
		t^{2-\beta-\de}\left|\bra{D}^k\psi_{\theta}(t,x) \right|\le\sum_{N\in2^{\mathbb{Z}}}I_{N}(t,x),
	\end{align*}
	where
	\[
	I_{N}(t,x)= t^{2-\beta-\de}\left|\int_{\mathbb{R}^4} e^{it\phi_\theta(\xi)}\bra{\xi}^k \rho_{N}(\xi)\widehat{f_{\theta}}(\xi)d\xi\right|.
	\]
	The proof is based on the stationary phase method. We first decompose the frequency support as
	\[
	\sum_{N\in2^{\mathbb{Z}}}I_{N}(t,x)=\left(\sum_{N\le t^{-1}}+\sum_{N\ge t^{\frac{2}{n-2}}}+\sum_{t^{-1}\le N\le t^{\frac{2}{n-2}}}\right)I_{N}(t,x).
	\]
	The high and low-frequency part can be estimated as
\begin{align*}
	\left(\sum_{N\le t^{-1}} + \sum_{N\ge t^{\frac{2}{n-2}}}\right)I_{N}(t,x)&\les  \left(\sum_{N\le t^{-1}} + \sum_{N\ge t^{\frac{2}{n-2}}}\right)t^{2-\beta-\de} \bra{N}^k\|\rho_{N}\|_{L_{\xi}^{2}}\|\rho_{N}\widehat{f_{\theta}}\|_{L_{x}^{2}}\\
	&\les  \left(\sum_{N\le t^{-1}} + \sum_{N\ge t^{\frac{2}{n-2}}}\right)t^{2-\beta-\de}N^2 \bra{N}^{k-n}\|\psi_\theta\|_{H^n} \les \ve_1.
\end{align*}

	Let us now focus on the mid-frequency part. To prove this part, we use both the non-stationary and stationary
	phase methods. 
	From \eqref{theta-phase}, we have
	\[
	\nabla_{\xi}\phi_\theta(\xi)=-\theta\frac{\xi}{\bra{\xi}}+\frac{x}{t}.
	\]
	Then, when $|x|\ge t$, the phase $\phi_{\theta}$ is non-stationary. Indeed, we have
	\begin{align}
		|\nabla_{\xi}\phi_\theta(\xi)|\gtrsim\left|\frac{|x|}{t}-\frac{|\xi|}{\bra{\xi}}\right|\gtrsim1-\frac{|\xi|}{\bra{\xi}}\gtrsim \bra{\xi}^{-2}.\label{non-stat-1}
	\end{align}
	On the other hand, the phase $\phi_{\theta}$ could be stationary around
	$\xi_{0}$ when $|x|<t$:
	\[
	\nabla_{\xi}\phi_\theta(\xi_{0})=0\;\;\mbox{where}\;\;\xi_{0}=-\theta\frac{x}{\sqrt{t^{2}-|x|^{2}}}.
	\]
	We now set $|\xi_{0}|\sim N_{0} \in 2^\Z$. If $N\nsim N_{0}$, then the phase is non-stationary and simple calculation yields that
	\begin{align}
		\Big|\nabla_{\xi}\phi_\theta(\xi)\Big|\gtrsim\max\left(\frac{|\xi-\xi_{0}|}{\bra{N}^{3}},\frac{|\xi-\xi_{0}|}{\bra{N_{0}}^{3}}\right).\label{non-stat-2}
	\end{align}

Let $\mathbf p_\phi = \frac{\nabla_\xi \phi_\theta}{|\nabla_\xi \phi_\theta|^2}$. Then we write the mid-frequency of
	$I_{N}(t,x)$ by the integration by parts as follows:
	\[
	t^{2-\beta-\de}\int_{\mathbb{R}^{3}} e^{it\phi_{\theta}(\xi)}\bra{\xi}^k \rho_{N}(\xi) \widehat{f_{\theta}}(\xi)d\xi = \sum_{j = 1}^4I_{N}^{j}(t,x),
	\]
	where
	\begin{align}
		\begin{aligned}\label{eq-i123}
			I_{N}^{1}(t,x) & =-\int_{\mathbb{R}^{4}}e^{it\phi_\theta(\xi)}\mathbf p_\phi^2 \cdot \nabla_\xi^2\left(\bra{\xi}^k \rho_N\widehat{f_{\theta}}(\xi)\right)\, d\xi,\\
			I_{N}^{2}(t,x) & =-\int_{\mathbb{R}^{4}}e^{it\phi_\theta(\xi)}\mathbf p_\phi \cdot \Big[(\nabla_\xi \mathbf p_\phi) \nabla_\xi\left(\bra{\xi}^k \rho_N\widehat{f_{\theta}}(\xi)\right)\Big]\, d\xi,\\
     	I_{N}^{3}(t,x) &= -2\int_{\mathbb{R}^{4}}e^{it\phi_\theta(\xi)}(\nabla_\xi \cdot \mathbf p_\phi) \mathbf p_\phi \cdot \nabla_\xi\left(\bra{\xi}^k \rho_N\widehat{f_{\theta}}(\xi)\right)\,d\xi,\\
			I_{N}^{4}(t,x) & =-\int_{\mathbb{R}^{4}}e^{it\phi_\theta(\xi)}\nabla_\xi\cdot [(\nabla_\xi \cdot \mathbf p_\phi) \mathbf p_\phi ]\bra{\xi}^k \rho_N\widehat{f_{\theta}}(\xi)\,d\xi.
		\end{aligned}
	\end{align}

	By \eqref{non-stat-1} and \eqref{non-stat-2}, we obtain the following bounds independent
	of $N_{0}$:
	\begin{align}
		\begin{aligned}\left|\mathbf p_\phi \right| & \les\bra{N}^{3}N^{-1},\\
			\left|\nabla_{\xi}\mathbf p_\phi \right| + \left|\nabla_\xi \cdot \mathbf p_\phi \right| & \les\bra{N}^5N^{-2},\\
			\left|\nabla_\xi\cdot [(\nabla_\xi \cdot \mathbf p_\phi ) \mathbf p_\phi  ]\right| & \les\bra{N}^{7}N^{-3}.
		\end{aligned}
		\label{bound-phase}
	\end{align}
	Using \eqref{bound-phase} and Sobolev embedding, we see that
	\begin{align*}
		&\Big|I_{N}^{1}(t,x)\Big| \\
		& \les t^{-\beta-\de}\bra{N}^{6}N^{-2} \left( \bra{N}^k\left\Vert \nabla_{\xi}^{2}\left(\rho_{N}\widehat{f_{\theta}}\right)\right\Vert _{L_{\xi}^{1}} + \bra{N}^{k-1}\left\Vert \nabla_{\xi}\left(\rho_{N}\widehat{f_{\theta}}\right)\right\Vert _{L_{\xi}^{1}} + \bra{N}^{k-2}\left\Vert \left(\rho_{N}\widehat{f_{\theta}}\right)\right\Vert _{L_{\xi}^{1}} \right)\\
		& \les t^{-\beta-\de}\bra{N}^{6+k}N^{-2}\left(N^2\|x^{2}f_{\theta}\|_{L^2} + N^{2}\left\|  \widehat{ xf_{\theta}}\right\|_{L_{\xi}^{4}} + N^{2-4\ve}\normo{\wh{f_{\theta}}}_{L_{\xi}^{\frac1\ve}}\right)\\
& \les t^{-\beta-\de}\bra{N}^{6+k}N^{-2}\left(N^{2}\|x^{2}f_{\theta}\|_{L^2} + N^{2-4\ve}\normo{\bra{x}^2f_{\theta}}_{L^2}\right)\\
&\les  t^{-\beta} \bra{N}^{6+k} \left( 1+  N^{-4\ve} \right)\ve_1,
	\end{align*}
for sufficiently small $0 < \ve \ll 1$,	which implies that
	\begin{align*}
	\sum_{t^{-1}\le N\le t^{\frac{2}{n-2}}}\Big|I_{N}^{1}(t,x)\Big| &\les  \ve_1,
	\end{align*}
	since $\frac{52}{n-2} \le \beta$. We also obtain, for the  $0 < \ve \ll1$,
	\begin{align*}
		|I_{N}^{2}(t,x)| + |I_{N}^{3}(t,x)| & \les t^{-\beta-\de}\bra{N}^{8+k}N^{-3}\normo{\nabla_{\xi}\left(\rho_{N}\widehat{f_{\theta}}\right)}_{L_{\xi}^{1}}\\
		& \les t^{-\beta-\de}\bra{N}^{8+k}N^{-3}\left(  N^{3}\left\Vert \widehat{xf_{\theta}} \right\Vert _{L_{\xi}^{4}} + N^{3-4\ve}\left\Vert \widehat{f_{\theta}}\right\Vert _{L_{\xi}^{\frac1\ve}} \right)\\
		& \les t^{-\beta}\bra{N}^{8+k}N^{-4\ve}\ve_1.
	\end{align*}
	Then we have
	\begin{align}\label{eq:timeesti-i23}
	\sum_{t^{-1}\le N\le t^{\frac{2}{n-2}}}|I_{N}^{2}(t,x)| + |I_{N}^{3}(t,x)| \les \ve_1.		
	\end{align}
	Note that the estimate \eqref{eq:timeesti-i23} requires that   $\frac{56}{n-2} \le \beta$.   As for $I_{N}^4$, we can estimate the easier way by using  the \eqref{bound-phase}.
	
	It remains to consider the stationary phase case, $N\sim N_{0}$. We
	further decompose $|\xi-\xi_{0}|$ into dyadic pieces $L$. Let  $L_{0}\in2^{\mathbb{Z}}$
	satisfy that $\frac{L_{0}}{2}<t^{-1}\le L_{0}$. Then we write
	\[
	t^{2-\beta-\de}\left|\int_{\mathbb{R}^{4}}e^{it\phi_\theta(\xi)}\bra{\xi}^k\rho_{N}(\xi)  \widehat{f_{\theta}}(\xi)d\xi\right|\le\sum_{L=L_{0}}^{2^{10}N}|J_{L}|,
	\]
	where
	\begin{align*}
		J_{L}(t,x):= \left\{\begin{aligned} & t^{2-\beta-\de}\int_{\mathbb{R}^{4}}e^{it\phi_\theta(\xi)}\bra{\xi}^k\rho_{\le L_{0}}(\xi-\xi_{0})\rho_{N}(\xi) \widehat{f_{\theta}}(\xi)d\xi\qquad\mbox{when }L=L_{0},\\
			& t^{2-\beta-\de}\int_{\mathbb{R}^{4}}e^{it\phi_\theta(\xi)}\bra{\xi}^k \rho_{L}(\xi-\xi_{0})\rho_{N}(\xi) \widehat{f_{\theta}}(\xi)d\xi\quad\qquad\mbox{when }L>L_{0}.
		\end{aligned}\right.
	\end{align*}
	The case $L=L_{0}$ is the stationary one. The fact $L_{0}\sim t^{-1}$ implies that 
	\[
	|J_{L_{0}}|\les t^{2-\beta-\de} L_{0}^{2}\|\rho_{N}\widehat{f_{\theta}}\|_{L_{\xi}^{2}}\les  \ve_1.
	\]
	The case $L>L_{0}$ returns to non-stationary phase. By the  integration by parts, we write $J_{L}(t,x)$ as in \eqref{eq-i123}:
	\[
	J_{L}(t,x) = \sum_{j = 1}^4 J_{L}^{j}(t,x),
	\]
	where
	\begin{align*}
		\begin{aligned}J_{L}^{1}(t,x) & =-t^{-\beta-\de}\int_{\mathbb{R}^{4}}e^{it\phi_\theta(\xi)}\mathbf p_\phi ^2 \cdot \nabla_\xi^2\left(\bra{\xi}^k\rho_{L}(\xi-\xi_{0})\rho_{N}(\xi)\widehat{f_{\theta}}(\xi)\right)d\xi,\\
			J_{L}^{2}(t,x) &
=-t^{-\beta-\de}\int_{\mathbb{R}^{4}}e^{it\phi_\theta(\xi)}\mathbf p_\phi  \cdot \Big[(\nabla_\xi \mathbf p_\phi ) \nabla_\xi\left(\bra{\xi}^k\rho_{L}(\xi-\xi_{0})\rho_{N}(\xi)\widehat{f_{\theta}}(\xi)\right)\Big]d\xi,\\
			J_{L}^{3}(t,x) & =-2t^{-\beta-\de}\int_{\mathbb{R}^{4}}e^{it\phi_\theta(\xi)}(\nabla_\xi \cdot \mathbf p_\phi ) \mathbf p_\phi  \cdot \nabla_\xi\left(\bra{\xi}^k\rho_{L}(\xi-\xi_{0})\rho_{N}(\xi)\widehat{f_{\theta}}(\xi)\right)d\xi,\\
			J_{L}^{4}(t,x) & =-t^{-\beta-\de}\int_{\mathbb{R}^{4}}e^{it\phi_\theta(\xi)}\nabla_\xi\cdot [(\nabla_\xi \cdot \mathbf p_\phi ) \mathbf p_\phi  ]\bra{\xi}^k\rho_{L}(\xi-\xi_{0})\rho_{N}(\xi)\widehat{f_{\theta}}(\xi)d\xi.
		\end{aligned}
	\end{align*}
	Each $J_{L}^{j}$ can be estimated similarly to $I_N^j$ with the following phase
	bounds:
	\begin{align*}
		\begin{aligned}\left|\mathbf p_\phi \right| & \les L^{-1}\bra{N}^{3},\\
			\left|\nabla_\xi \mathbf p_\phi \right| + \left|\nabla_\xi \cdot \mathbf p_\phi \right|& \les L^{-1}\bra{N}^{2}+L^{-2}\bra{N}^{3},\\
			\left|\nabla_\xi\cdot [(\nabla_\xi \cdot \mathbf p_\phi ) \mathbf p_\phi  ]\right| & \les L^{-2}\bra{N}^{6}+L^{-3}\bra{N}^{5}.
		\end{aligned}
	\end{align*}
	This finishes the proof of time decay of Dirac part \eqref{eq:timedecay-d}.
\end{proof}

	\begin{prop}[Time decay of amplitudes for Maxwell part]\label{prop:timedecay-maxwell}
		Let $\mu = 0, \cdots, 4$ and $ \kappa \in \{+,-\}$. Assume
		that  $A_{\mu}$ satisfies a priori assumption \eqref{eq:assumption-apriori},
		for given $\ve_{1}$ and $T$.  Then there exists $C$
		satisfying that for $0 \le s \le 20$,
		\begin{align}
			\|A_{\mu, \kappa}(t)\|_{W^{s,\infty}}&\le C\bra{t}^{-\frac32}\ve_{1}, \label{eq:timedecay-m}
		\end{align}
		for any $t \in [0, T]$.
	\end{prop}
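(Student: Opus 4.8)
The plan is to mimic the stationary-phase scheme of the proof of Proposition~\ref{prop:timedecay}, replacing the Dirac phase by the wave phase. Writing $A_{\mu,\kappa}(t)=e^{\kappa it|D|}F_{\mu,\kappa}(t)$, we have
\[
\bra D^s A_{\mu,\kappa}(t,x)=\frac1{(2\pi)^4}\int_{\R^4}e^{it\phi_\kappa(\xi)}\bra\xi^s\,\wh{F_{\mu,\kappa}}(t,\xi)\,d\xi,\qquad \phi_\kappa(\xi):=\kappa|\xi|+\xi\cdot\tfrac xt ,
\]
and from \eqref{eq:assumption-apriori} we record $\|F_{\mu,\kappa}(t)\|_{H^n}\les\bra t^{\frac\de2-\frac\beta2}\ve_1\les\ve_1$ (using $\beta<\de$), $\|xF_{\mu,\kappa}(t)\|_{\doth^1}\les\ve_1$ and $\|x^2 F_{\mu,\kappa}(t)\|_{\doth^2}\les\bra t^{\frac18}\ve_1$. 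The case $t\les1$ is immediate from H\"older's inequality and the $H^n$-bound, so assume $t\gg1$ and decompose $\wh{F_{\mu,\kappa}}=\sum_N\rho_N\wh{F_{\mu,\kappa}}$; it suffices to bound $\sum_N\big|\int e^{it\phi_\kappa}\bra\xi^s\rho_N\wh{F_{\mu,\kappa}}\,d\xi\big|$ by $\les t^{-\frac32}\ve_1$.

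For very low frequencies $N\le t^{-\frac34}$ (say) Cauchy--Schwarz gives a contribution $\les N^2\bra N^s\|F_{\mu,\kappa}\|_{L^2}\les N^2\ve_1$, and $\sum_{N\le t^{-3/4}}N^2\les t^{-\frac32}$, which is already the claimed rate; for very high frequencies $N\ge t^{\frac2{n-2}}$ we use $\|\rho_N\wh{F_{\mu,\kappa}}\|_{L^2}\les N^{-n}\|F_{\mu,\kappa}\|_{H^n}$ to get $\les N^{2+s-n}\ve_1$, which sums to $\les t^{-\frac{2(n-s-2)}{n-2}}\ve_1\les t^{-\frac32}\ve_1$ for $n$ large. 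Everything thus reduces to the intermediate range $t^{-\frac34}\le N\le t^{\frac2{n-2}}$, where one must examine the phase. One computes $\nabla_\xi\phi_\kappa=\kappa\tfrac\xi{|\xi|}+\tfrac xt$ and $\nabla_\xi^2\phi_\kappa=\tfrac\kappa{|\xi|}\big(I_4-\tfrac\xi{|\xi|}\otimes\tfrac\xi{|\xi|}\big)$, so the Hessian has three nonzero eigenvalues of size $\sim|\xi|^{-1}$ transverse to $\xi$ and one zero eigenvalue along $\xi$; in particular $|\nabla_\xi\phi_\kappa|\ge\big||x|/t-1\big|$ and the integrand is oscillatory unless $|x|\sim t$ and $\xi$ is almost (anti)parallel to $x$. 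Away from this region (where $\big||x|/t-1\big|$ is not too small, or $\xi$ is far from the critical direction) we integrate by parts in $\xi$ repeatedly, exactly as in \eqref{eq-i123}--\eqref{bound-phase} with the bounds on $\mathbf p_{\phi_\kappa}:=\nabla_\xi\phi_\kappa/|\nabla_\xi\phi_\kappa|^2$ read off from the formulas above and the $\xi$-derivatives of $\bra\xi^s\rho_N\wh{F_{\mu,\kappa}}$ controlled by $\|xF_{\mu,\kappa}\|_{\doth^1}$ and $\|x^2F_{\mu,\kappa}\|_{\doth^2}$ at frequency $N$; this yields an arbitrarily large power of $t^{-1}$ and is harmless.

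Near the light cone one passes to polar coordinates $\xi=r\omega$, $r\sim N$, and performs stationary phase in the three transverse directions, where $\phi_\kappa\approx\mathrm{const}+\tfrac{\kappa}{2r}|\xi_\perp|^2$. The leading term is $\les N\cdot(t/N)^{-\frac32}\bra N^s\|\rho_N\wh{F_{\mu,\kappa}}\|_{L^\infty_\xi}\les t^{-\frac32}N^{\frac92}\bra N^s\|\rho_N\wh{F_{\mu,\kappa}}\|_{L^2}$ by Bernstein (the $L^\infty$ bound needs no weighted norm), which sums over the intermediate range to $\les t^{-\frac32}\ve_1$; the error terms of the stationary-phase expansion carry extra factors $(N/t)$ and involve $\|x^2F_{\mu,\kappa}\|_{\doth^2}$, so the mild growth $\bra t^{1/8}$ is more than compensated and these also sum to $\les t^{-\frac32}\ve_1$ (here $N\gtrsim t^{-3/4}$ guarantees the expansion converges, and \eqref{eq:condi-parameter} absorbs the small powers of $t$ produced by the $N$-summation). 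Collecting all pieces yields \eqref{eq:timedecay-m}; the exponent $\tfrac32=\tfrac{d-1}2$ with $d=4$ is the optimal decay of the free wave, so nothing is lost.

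The main obstacle, relative to Proposition~\ref{prop:timedecay}, is the degeneracy of $\phi_\kappa$ along the radial direction: its critical set is a ray rather than an isolated point, so one cannot integrate by parts in all variables near the light cone and must instead isolate the three non-degenerate transverse directions and extract the sharp $t^{-3/2}$ there from a genuine (not merely Cauchy--Schwarz) stationary-phase estimate. A secondary point is that the available weighted control of $F_{\mu,\kappa}$ is only at the borderline level of two $x$-moments, with a slow $\bra t^{1/8}$-growth in the highest one; the argument must be organized so that the main contributions to the rate $t^{-3/2}$ come from the crude low-frequency Cauchy--Schwarz bound and from the leading stationary-phase term (neither of which sees the weighted norms), while those norms enter only the error terms, where the extra $\langle t\rangle^{-1}$ gain renders the $\bra t^{1/8}$ harmless.
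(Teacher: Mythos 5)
Your overall frame (dyadic frequency decomposition, easy low/high ranges, then a careful treatment of the radially degenerate wave phase in the mid range) is the same as the paper's, but the key step in your mid-frequency argument has a genuine gap. You bound the leading stationary-phase contribution by $N(t/N)^{-\frac32}\bra{N}^s\|\rho_N\wh{F_{\mu,\kappa}}\|_{L^\infty_\xi}$ and then pass to $\|\rho_N\wh{F_{\mu,\kappa}}\|_{L^2_\xi}$ ``by Bernstein,'' claiming the $L^\infty_\xi$ bound ``needs no weighted norm.'' That inequality goes the wrong way: $\rho_N\wh{F_{\mu,\kappa}}$ is an $L^2_\xi$ function with no band-limitation in the dual ($x$-side) variable, so there is no estimate of the form $\|\rho_N\wh{F}\|_{L^\infty_\xi}\les N^2\|\rho_N\wh{F}\|_{L^2_\xi}$ (take $\wh F$ a bump of height one on a tiny ball inside the annulus). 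Pointwise (or $L^p_\xi$ with large $p$) control of $\wh{F_{\mu,\kappa}}$ can only come from Sobolev embedding in $\xi$, i.e.\ precisely from the weighted norms $\|xF_{\mu,\kappa}\|_{\dot H^1}$ and $\|x^2F_{\mu,\kappa}\|_{\dot H^2}$. Consequently your organizing principle --- that the $\bra{t}^{1/8}$-growing weighted norms enter only error terms while the main term is weight-free --- does not survive: in the paper's proof the near-stationary piece ($L=L_0\sim t^{-5/8}$ in the angular decomposition) is estimated by H\"older in $\xi$ with exponent $1/\ve$ and Sobolev embedding, so it \emph{does} use all the weighted norms, and it is exactly there that the parameter condition $\frac\beta2+\frac\de2\le\frac14$ is consumed. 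A secondary inaccuracy: away from the stationary region you claim repeated integration by parts yields ``an arbitrarily large power of $t^{-1}$,'' but only two $\xi$-derivatives of $\wh{F_{\mu,\kappa}}$ are controlled by the a priori assumption, so at most two integrations by parts are available (which is in fact what the paper uses).

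Your approach also diverges from the paper's in the near-cone region in a way that would need substantial work to make rigorous: you invoke a genuine stationary-phase expansion in the three transverse directions, which requires pointwise control of the amplitude and its derivatives near the critical sphere, again not directly furnished by the $L^2$-based weighted norms. The paper sidesteps any stationary-phase expansion: it decomposes dyadically in the quantity $1-\kappa\frac{\xi\cdot x}{|\xi|t}$ (and in $|\frac{\xi}{|\xi|}-\frac{x}{t}|$ when $|x|\le 10t$), estimates the innermost piece by H\"older/Sobolev in $\xi$ using the weights, and on the remaining pieces integrates by parts twice in the \emph{radial} direction only, exploiting $r\partial_r\vp_{\ka}=\vp_{\ka}$ so that $|\partial_r\vp_{\ka}|\sim L$ there. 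If you want to salvage your scheme, the leading term must be re-estimated with the weighted norms (accepting the $\bra t^{1/8}$ loss and compensating via the size of the angular support, as the paper does), rather than by the invalid $L^\infty_\xi\to L^2_\xi$ reduction.
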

	
\begin{proof} 	 By the same argument as the proof of time decay of spinor, we may assume that $t \gg 1$. We will show that
	\begin{align*}
			&\|A_{\mu,\ka}(t)\|_{W^{s,\infty}} \les t^{-\frac32}\ve_1,
	\end{align*}
	for $\ka \in \{ +,-\}$. We take a frequency decomposition for $A_{\mu,\ka}$ as
	\[
	t^\frac32\bra{D}^s A_{\mu, \ka}(t,x) \les \sum_{N \in 2^\Z} t^\frac32\left| \int_{\R^4} e^{it\vp_{\ka}(\xi)} \bra{\xi}^s \rho_N(\xi) \wh{F_{\mu, \ka}}(t,\xi)  d\xi \right| =: \sum_{N\in 2^\Z} \wt{I}_N,
	\]
	where
	\[
	\vp_{\ka}(\xi) := -|\xi| + \ka \xi \cdot \frac{x}{t}.
	\]
	
	For the low and high frequency parts, we get, for $\zeta := \frac\beta4 + \frac\de4$,
	\begin{align*}
	\sum_{N \le t^{-\frac34-\zeta}} \wt{I}_N(t,x) &\les \sum_{N \le t^{-\frac34-\zeta} }t^{\frac32} \|\rho_N\|_{L_\xi^2} \normo{ {F_{\mu, \ka}} }_{L^2} \les  \ve_1
	\end{align*}
	and
	\[
	\sum_{N \ge t^{\frac2{n-2}}} \wt{I}_N(t,x) \les \sum_{N \ge t^{\frac2{n-2}}}t^\frac32 N^{s-n + 2} \|A_{\mu, \ka}\|_{ H^{n}} \les t^{-\frac12+\frac{2s}{n-2}}\|A_{\mu, \ka}\|_{ H^{n}} \les \ve_1.
	\]
	We now consider the mid frequencies $t^{-\frac34-\zeta} \le N \le t^{\frac 2{n-2}} $. To handle this, we further divide an angle $\left( 1- \ka \frac{\xi\cdot x}{|\xi|t}\right)$ dyadically with $L\in 2^{\Z}$. Let us choose dyadic number $L_0 \sim t^{-\frac58}$. Then we write
	\[
	\wt{I}_N(t,x) \le \sum_{L\ge L_0} \wt{I}_{N,L}(t,x),
	\]
	where
	\begin{align*}
		\wt{I}_{N,L}(t,x) = \left\{\begin{aligned}
			&t^\frac32\left|\int_{\R^4} e^{it\vp_{\ka}(\xi)} \bra{\xi} ^s \rho_{\le L_0}\left( 1- \ka\frac{\xi\cdot x}{|\xi|t}\right) \rho_N(\xi) \wh{F_{\mu, \ka}}(\xi) d\xi \right|  \qquad \mbox{for}\;\; L= L_0,\\
			&t^\frac32\left| \int_{\R^4} e^{it\vp_{\ka}(\xi)} \bra{\xi} ^s \rho_{L}\left( 1- \ka\frac{\xi\cdot x}{|\xi|t}\right) \rho_N(\xi) \wh{F_{\mu, \ka}}(\xi) d\xi \right| \qquad\quad \mbox{for}\;\; L> L_0.
		\end{aligned}\right.
	\end{align*}
	If $L = L_0$, then H\"older inequality and Sobolev embedding yield that
	\begin{align*}
	\wt{I}_{N,L_0}(t,x) &\les t^\frac32\bra{N}^s\|\rho_N \rho_{\le L_0}\|_{L_\xi^\frac1{1-\ve}} \left\|\rho_N \wh{F_{\mu,\ka}}\right\|_{L_\xi^\frac1\ve} \les t^\frac32\bra{N}^sN^{4-4\ve}L_0^{3-3\ve} \left\|\rho_{N} \wh{F_{\mu, \ka}}\right\|_{L_\xi^\frac1\ve} \\
	&\les  t^{-\frac{15}{8}+ \frac{15}8\ve}N^{2-4\ve}\bra{N}^{s} \left(  \left\| \wh{x^2F_{\mu, \ka}}\right\|_{\doth^2} + \left\| \wh{xF_{\mu, \ka}}\right\|_{\doth^1} + \left\|\wh{F_{\mu, \ka}}\right\|_{L^2} \right) \\
	&\les t^{-\frac{15}{8}+ \frac{15}8\ve + \frac\beta2 + \frac\de2} N^{2-4\ve}\bra{N}^{s} \ve_1,
		\end{align*}
		where we choose $\ve>0$, satisfying $0<\ve \ll \frac38 -\frac{\beta}{2}-\frac\de2 $, since $\frac{\beta}{2}+\frac\de2 \le \frac14$. Then this leads to that
	\[
	\sum_{t^{-\frac34-\zeta} \le N \le t^{\frac2{n-2}}}\wt{I}_{N,L_0}(t,x) \les \ve_1.
	\]
	We now consider the case $L > L_0$. Note that
	\[
	r\partial_r \vp_{\ka} = \vp_{\ka}, \quad \partial_r\left(\frac{r}{\vp_{\ka}}\right) = 0,
	\]
	where $\partial_r f = \frac{\xi}{|\xi|}\cdot\nabla_\xi{f} $ is the radial derivative. Taking the integration by parts twice, we see that
	\begin{align*}
		\wt{I}_{N,L}(t,x) &\les t^{-\frac12}  \left| \int_{\R^4}  e^{it\vp_{\ka}(\xi)}  \left( \partial_r \vp_{\ka}\right)^{-2} \partial_r^2\Big[ \bra{\xi} ^s\rho_L\left( 1- \ka\frac{\xi\cdot x}{|\xi|t}\right) \rho_N(\xi) \wh{F_{\mu, \ka}}(\xi) \Big] d\xi	\right|.
	\end{align*}
	Since
	\[
	|\partial_r \vp_{\ka}| = |\vp_{\ka}/r| = \left| 1 - \ka\frac {\xi\cdot x}{t|\xi|} \right|  \sim L,
	\]
by H\"older inequality we see that
	\begin{align*}
		\wt{I}_{N,L}(t,x) &\les t^{-\frac12}  L^{-2} \Big\|\partial_r^2 \left(\bra{\xi} ^s\rho_L \rho_N \wh{F_{\mu, \ka}} \right) \Big\|_{L_\xi^1}\\
		&\les t^{-\frac12}L^{-2}\bra{N}^{s} \left(\|\widetilde\rho_N\rho_L \|_{L_\xi^2} \left\|P_N\left(x^2 F_{\mu, \ka} \right)\right\|_{L^2} + N^{-1} \|\partial_r\widetilde\rho_N\rho_L\|_{L_\xi^2}\left\||\xi|\rho_N\wh{ x F_{\mu, \ka}}\right\|_{L_\xi^2}  \right. \\
		&\hspace{8cm} \left. + \left\|\partial_r^2\widetilde\rho_N\rho_L\right\|_{L_\xi^2}\|\wh{ P_N F_{\mu, \ka}}\|_{L_\xi^2}\right)\\
		&\les t^{-\frac12} L^{-2}\bra{N}^s \left( L^\frac32 \|x^2F_{\mu, \ka}\|_{\dot{H}^2} +  L^\frac32\left\|  x F_{\mu, \ka}\right\|_{\doth^1}  +  L^\frac32 \| F_{\mu, \ka}\|_{L^2}\right) \\
		&\les  t^{-\frac38} \bra{N}^s  L^{-\frac12} \ve_1, 
	\end{align*}
	which implies that
	\begin{align}\begin{aligned}\label{eq:l2-decay}
		\sum_{t^{-\frac43-\zeta} \le N \le t^\frac2{n-2}}\sum_{L_0 <L \le1 } \wt{I}_{N,L}(t,x) &\les t^{-\frac1{16}+\frac{s}{n-2}} \ve_1,\\
        \sum_{t^{-\frac43-\zeta} \le N \le t^\frac2{n-2}}\sum_{L >1 }\wt{I}_{N,L}(t,x)  &\les t^{-\frac38+ \frac{s}{n-2}} \ve_1.
	\end{aligned}\end{align}
To conclude \eqref{eq:l2-decay}, it  requires the condition $n \ge 320$.

We now treat the case $|x| \le 10t$. Similarly, we decompose $\left|  \frac{\xi}{|\xi|}- \frac{ x}{t} \right|$ into $L\in 2^{\Z}$. Let $L_0 \sim t^{-\frac58}$. Then $\wt{I}_N$ is bounded by the following terms:
\[ 
\wt{I}_N(t,x) \le \sum_{L\ge L_0} \wt{J}_{N,L}(t,x)
\]
where
\begin{align*}
	\wt{J}_{N,L}(t,x) = \left\{\begin{aligned}
		&t^{\frac32}\left|\int_{\R^4} e^{it\vp_{\ka}(\xi)} \bra{\xi}^s\rho_{\le L_0}\left( \frac{\xi}{|\xi|}- \frac{ x}{t}\right) \rho_N(\xi) \wh{F_{\mu, \ka}}(\xi) d\xi \right|  \qquad \mbox{for}\;\; L= L_0,\\
		&t^\frac32\left| \int_{\R^4} e^{it\vp_{\ka}(\xi)} \bra{\xi}^s \rho_{L}\left( \frac{\xi}{|\xi|}- \frac{ x}{t}\right) \rho_N(\xi) \wh{F_{\mu, \ka}}(\xi) d\xi \right| \qquad\quad \mbox{for}\;\; L> L_0.
	\end{aligned}\right.
\end{align*}
For the case $L=L_0$, we may estimate in a similar way to the estimate of $\wt{I}_{N,L_0}$. Then we assume $L > L_0$. By integration parts in $\xi$, we write $\wt{J}_{N,L} = \sum_{\ell = 1}^4 \wt{J}_{N, L}^\ell$ with $\mathbf p_\vp = \frac{\nabla_\xi\vp_{\ka}}{|\nabla_\xi\vp_{\ka}|^2}$, where
	\begin{align*}
	\wt{J}_{N,L}^1(t,x) &\les t^{-\frac12}  \left| \int_{\R^4}  e^{it\vp_{\ka}(\xi)} \mathbf p_\vp^2  \nabla_\xi^2\Big[ \bra{\xi}^s\rho_L\left(\frac{\xi}{|\xi|}- \frac{ x}{t}\right) \rho_N(\xi) \wh{F_{\mu, \ka}}(\xi) \Big]\, d\xi	\right|,\\
	\wt{J}_{N,L}^2(t,x) &= t^{-\frac12} \left| \int_{\R^4} e^{it\vp_{\ka}(\xi)} \mathbf p_\vp \cdot \Big[(\nabla_\xi \mathbf p_\vp) \nabla_\xi\left( \bra{\xi}^s\rho_L\left(\frac{\xi}{|\xi|}- \frac{ x}{t}\right) \rho_N(\xi) \wh{F_{\mu, \ka}}(\xi) \right)\Big]\, d\xi\right|	,\\
	\wt{J}_{N,L}^3(t,x) &= t^{-\frac12} \left| \int_{\R^4} e^{it\vp_{\ka}(\xi)} (\nabla_\xi \cdot \mathbf p_\vp) \mathbf p_\vp \cdot \nabla_\xi\left( \bra{\xi}^s\left(\frac{\xi}{|\xi|}- \frac{ x}{t}\right) \rho_N(\xi) \wh{F_{\mu, \ka}}(\xi) \right)\, d\xi\right|	,\\
	\wt{J}_{N,L}^4(t,x) &= t^{-\frac12} \left| \int_{\R^4} e^{it\vp_{\ka}(\xi)} \nabla_\xi\cdot [(\nabla_\xi \cdot \mathbf p_\vp) \mathbf p_\vp ]  \bra{\xi}^s\rho_L\left(\frac{\xi}{|\xi|}- \frac{ x}{t}\right) \rho_N(\xi) \wh{F_{\mu, \ka}}(\xi)\,  d\xi \right|.
\end{align*}
One can use the same arguments as above and the following phase estimates
\begin{align*}
	|\mathbf p_\vp| & \les L^{-1}\\
	|\nabla_\xi \mathbf p_\vp| + |\nabla_\xi \cdot \mathbf p_\vp|  &\les L^{-3} N^{-1} \\
	|\nabla_\xi\cdot [(\nabla_\xi \cdot \mathbf p_\vp) \mathbf p_\vp ]| &\les L^{-4}N^{-2} + L^{-3}N^{-2}.
\end{align*}
This completes the proof of \eqref{eq:timedecay-m}.
\end{proof}

\section{Useful estimates}\label{sec:useful}
In this section, we gather several lemmas that are used throughout the paper. First we introduce Coifman-Meyer estimates, utilized to handle the multipliers in the energy estimates.

\begin{lemma}[Coifman-Meyer operator estimates]\label{lem:coif-mey}
	Let $1 \le p, q \le \infty$ satisfy that $\frac{1}{p}+\frac{1}{q}=\frac{1}{2}$. Assume that
	\[
	\|\textbf{m}\|_{\rm CM}:=\left\Vert \iint_{\mathbb{R}^{4+4}}\mathbf{m}(\xi,\zeta)e^{ix\cdot\xi}e^{iy\cdot\eta}d\xi d\eta\right\Vert _{L_{x,y}^{1}}\le C_{\mathbf{m}}\quad(\zeta = \eta \;\,\mbox{or}\,\;\xi-\eta ).
	\]
	Then
	\begin{align*}
		\left\Vert \int_{\mathbb{R}^{4}}\mathbf{m}(\xi, \zeta)\widehat{v}(\eta)\widehat{w}(\xi-\eta)d\eta\right\Vert _{L_{\xi}^{2}}\les C_{\mathbf{m}}\|v\|_{L^{p}}\|w\|_{L^{q}}.
	\end{align*}
\end{lemma}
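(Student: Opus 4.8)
The plan is to prove the Coifman--Meyer estimate by reducing it to a convolution inequality on the physical side, exploiting that $\|\mathbf m\|_{\rm CM}$ is precisely the $L^1$ norm of the kernel $K(x,y)$ obtained by inverse-Fourier-transforming the symbol. First I would write out the bilinear operator
\[
T_{\mathbf m}(v,w)(z) := \mathcal F^{-1}\!\left( \int_{\R^4} \mathbf m(\xi,\zeta)\,\widehat v(\eta)\,\widehat w(\xi-\eta)\,d\eta\right)(z),
\]
and insert the Fourier representations of $\widehat v$ and $\widehat w$ so that, after interchanging the order of integration (justified by absolute convergence once the kernel is integrable), $T_{\mathbf m}(v,w)(z)$ becomes an integral of $v(z-x)\,w(z-y)$ against the kernel $K(x,y)$ in the case $\zeta = \xi - \eta$ (and a trivial variant for $\zeta = \eta$). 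The key algebraic point is that the oscillatory factors $e^{iz\cdot\xi}$, $e^{-i\eta\cdot(z-x-\cdots)}$ recombine exactly so that the variable $\xi,\eta$ integrals produce $K$ evaluated at the correct shifts; this is the standard computation identifying a Coifman--Meyer multiplier operator with a kernel operator.

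Next, granted the pointwise representation $T_{\mathbf m}(v,w)(z) = \iint K(x,y)\, v(z-x)\, w(z-y)\, dx\, dy$, I would take the $L^2_z$ norm and move it inside the $x,y$ integrals by Minkowski's integral inequality:
\[
\|T_{\mathbf m}(v,w)\|_{L^2_z} \le \iint |K(x,y)|\, \big\| v(\cdot-x)\, w(\cdot - y)\big\|_{L^2_z}\, dx\, dy.
\]
For each fixed $x,y$, Hölder's inequality with the exponent pair $\tfrac1p + \tfrac1q = \tfrac12$ gives $\|v(\cdot-x)w(\cdot-y)\|_{L^2_z} \le \|v\|_{L^p}\|w\|_{L^q}$, which is translation-invariant and hence independent of $x,y$. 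Pulling this constant out leaves exactly $\|K\|_{L^1_{x,y}}\,\|v\|_{L^p}\|w\|_{L^q} = \|\mathbf m\|_{\rm CM}\,\|v\|_{L^p}\|w\|_{L^q} \le C_{\mathbf m}\|v\|_{L^p}\|w\|_{L^q}$, which is the claim.

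I expect the only genuine subtlety — rather than an obstacle — to be the bookkeeping in the case distinction $\zeta = \eta$ versus $\zeta = \xi-\eta$: one must check that in each case the change of variables lines up so that $\mathbf m$ depends on $(\xi,\zeta)$ with $\zeta$ one of the two ``output-adapted'' frequencies, and that the resulting kernel is still $K(x,y)$ with the stated $L^1$ bound. A secondary technical point is justifying the Fubini interchange: this is clean when $v,w$ are Schwartz and $K \in L^1$, and the general case follows by density since the final bound only involves $L^p,L^q,L^2$ norms. No deep harmonic analysis (Calderón--Zygmund theory, Littlewood--Paley square functions) is needed here because the hypothesis is already stated as an $L^1$ kernel bound rather than the usual Marcinkiewicz-type derivative conditions on $\mathbf m$; the lemma is essentially a packaging of ``$L^1$ kernel $\Rightarrow$ bounded bilinear operator'' in a form convenient for the multiplier estimates in the later sections.
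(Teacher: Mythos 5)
The paper states Lemma \ref{lem:coif-mey} without proof, as a standard Coifman--Meyer-type fact, and your argument is exactly the canonical proof of it: since $\|\mathbf m\|_{\rm CM}$ is the $L^1$ norm of the physical-space kernel, the bilinear operator is an average of products of translates of $v$ and $w$, and Minkowski plus H\"older with $\tfrac1p+\tfrac1q=\tfrac12$ (together with Plancherel to pass between the stated $L^2_\xi$ norm and your $L^2_z$ norm) gives the bound; so the proposal is correct. One small bookkeeping correction: carrying out the Fourier substitution gives a representation of the form $\iint K(y,x)\,v(z-x-y)\,w(z-y)\,dx\,dy$ rather than literally $v(z-x)w(z-y)$, but since the H\"older step only uses translation invariance of the $L^p$ and $L^q$ norms, the exact shifts are immaterial, as you anticipated; likewise, at the endpoint $p=\infty$ or $q=\infty$ one should extend from Schwartz functions via the kernel formula itself (density fails in $L^\infty$), which is routine.
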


\begin{rem}\label{rem:coif-meyer}
	In the energy estimates, we focus on the following oscillatory integral of the form
	\begin{align}\label{eq:rem-oscil}
		\int_{\R^4} m(\xi,\eta) \wh{f}(\eta) \wh{g}(\xi-\eta)    d\eta.
	\end{align}
	Lemma \ref{lem:coif-mey} implies that the above integral has the same boundedness properties to the one obtained by H\"older inequality in space. If the multiplier $m(\xi,\eta)$ in \eqref{eq:rem-oscil} satisfies that
	\begin{align*}
		|\nabla_\xi^n \nabla_\eta^m m | \les |\xi|^{-n}|\eta|^{-m},
	\end{align*} 
	it leads us to  $	\|m\|_{\cm} \les 1$. 	This approach will be used crucially to handle the multipliers, coming from the various resonances in our main estimates.
\end{rem}

The following two lemmas are easily obtained.
\begin{lemma}[Hardy-Littlewood-Sobolev inequality]\label{lem:hls}
	Let $u_j : \R^{4} \to \C^4$. Then
	\begin{align}\label{eq:hls}
		\||x|^{-k} * \bra{u_1,u_2} \|_{L^p} \les  \|u_1\|_{L^q}\|u_2\|_{L^r},
	\end{align}
	for $\frac1p + \frac k4 = \frac1q + \frac1r$ satisfying $1 \le p <\infty$ and $0<k < 4$.
\end{lemma}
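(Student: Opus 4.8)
The statement to prove is the Hardy--Littlewood--Sobolev inequality as stated in Lemma~\ref{lem:hls}: for $u_j:\R^4\to\C^4$,
\[
\||x|^{-k}\ast\bra{u_1,u_2}\|_{L^p}\les\|u_1\|_{L^q}\|u_2\|_{L^r},
\]
whenever $\tfrac1p+\tfrac k4=\tfrac1q+\tfrac1r$, $1\le p<\infty$, $0<k<4$.

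\medskip
\noindent\textbf{Approach.} The plan is to reduce the vector-valued statement to the classical scalar Hardy--Littlewood--Sobolev inequality in two elementary steps: first a pointwise domination using Cauchy--Schwarz on the inner product, then an application of the scalar fractional-integration inequality composed with H\"older's inequality. I would not reprove the scalar HLS inequality itself; that is standard (see, e.g., Stein or Lieb--Loss) and I would simply cite it.

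\medskip
\noindent\textbf{Key steps.} First I would observe that, pointwise in $y$, the Cauchy--Schwarz inequality in $\C^4$ gives $|\bra{u_1(y),u_2(y)}|\le |u_1(y)|\,|u_2(y)|$, so that, since the kernel $|x|^{-k}$ is nonnegative,
\[
\bigl||x|^{-k}\ast\bra{u_1,u_2}\bigr|(x)\le \bigl(|x|^{-k}\ast(|u_1|\,|u_2|)\bigr)(x)
\]
for every $x$. Taking $L^p_x$ norms, it suffices to bound $\||x|^{-k}\ast(|u_1|\,|u_2|)\|_{L^p}$. Second, let $g=|u_1|\,|u_2|$, a scalar nonnegative function on $\R^4$. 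Set $\tfrac1s=\tfrac1q+\tfrac1r$; then H\"older's inequality gives $\|g\|_{L^s}\le\|u_1\|_{L^q}\|u_2\|_{L^r}$, and the hypothesis on the exponents reads $\tfrac1p=\tfrac1s-\tfrac{4-k}4$, i.e. $\tfrac1p+\tfrac k4=\tfrac1s$ with $1<s<\infty$ (the strict lower bound $s>1$ follows from $\tfrac1p<1$ and $0<k<4$ forcing $\tfrac1s>\tfrac k4>0$ and $\tfrac1s<1$; note $p<\infty$ is exactly what is assumed). The classical Hardy--Littlewood--Sobolev inequality in dimension $4$ then yields $\||x|^{-k}\ast g\|_{L^p}\les\|g\|_{L^s}$, and combining the two displays finishes the proof.

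\medskip
\noindent\textbf{Main obstacle.} There is essentially no obstacle; the only care needed is bookkeeping of the exponents to verify that the intermediate exponent $s$ defined by $\tfrac1s=\tfrac1q+\tfrac1r$ lies strictly between $1$ and $\infty$ and satisfies the scaling relation $\tfrac1p+\tfrac k4=\tfrac1s$ required by scalar HLS, which is immediate from the stated constraint $\tfrac1p+\tfrac k4=\tfrac1q+\tfrac1r$ together with $1\le p<\infty$ and $0<k<4$. One should also note the borderline case $q$ or $r$ equal to $\infty$ is permitted by H\"older and causes no trouble, and that the vector dimension $4$ of the target $\C^4$ is irrelevant beyond the Cauchy--Schwarz step.
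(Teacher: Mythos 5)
The paper offers no proof of this lemma at all (it is introduced with ``The following two lemmas are easily obtained''), so there is no in-paper argument to compare routes with; the route you outline --- Cauchy--Schwarz pointwise in the $\C^4$ inner product, H\"older to reach $\|u_1\|_{L^q}\|u_2\|_{L^r}$, then the scalar Hardy--Littlewood--Sobolev/fractional-integration inequality --- is the standard and clearly intended one, and the reduction steps themselves are fine.

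The genuine gap is in the exponent bookkeeping, which you yourself single out as the only thing to check. For the literal kernel $|x|^{-k}$ in $\R^4$, scalar HLS gives $\||x|^{-k}*g\|_{L^p}\les\|g\|_{L^s}$ under $\frac1p=\frac1s-\frac{4-k}{4}$, i.e. $\frac1s=\frac1p+1-\frac k4$; your sentence ``$\frac1p=\frac1s-\frac{4-k}{4}$, i.e.\ $\frac1p+\frac k4=\frac1s$'' equates two different relations, which coincide only when $k=2$. In fact the condition stated in the lemma, $\frac1p+\frac k4=\frac1q+\frac1r$, is not scale-invariant for the kernel $|x|^{-k}$ (test $u_j\mapsto u_j(\lambda\,\cdot)$), so no argument can prove the statement as literally written except for $k=2$. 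What the stated exponent relation does match is the Riesz potential $|D|^{-k}$, whose kernel is $c\,|x|^{-(4-k)}$, and that is exactly how the lemma is used in the paper (in Lemma \ref{lem:time-derivative} it is applied to $|D|^{-1}\bra{\psi_{\ka_1},\al_\mu\psi_{\ka_2}}$ with $k=1$, $p=2$, $q=4$, $r=2$). So the fix is to read the kernel as $|x|^{-(4-k)}$ (equivalently, prove the bound for $|D|^{-k}$), after which your three-step argument works verbatim with $\frac1s=\frac1q+\frac1r=\frac1p+\frac k4$. A smaller inaccuracy: $s>1$ does not follow from $1\le p<\infty$ and $0<k<4$ (e.g.\ $p=1$ forces $\frac1s>1$); scalar HLS needs $1<s<p<\infty$, so one must additionally assume $\frac1q+\frac1r<1$ --- true in every application in the paper, but not ``forced'' as your parenthetical asserts.
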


\begin{lemma}\label{lem:projection-deri}
	Let $\xi \in \mathbb{R}^{4}$ and  $\theta \in \{+,-\}$. 	Then we have
	\begin{align*}
		\Big|\nabla_{\xi}^{n}\Pi_{\theta}(\xi)\Big|\les \bra{\xi}^{-n},
	\end{align*}
	for $n \ge 0$.
\end{lemma}

\begin{lemma}\label{lem:time-derivative}
	Let $\theta, \ka \in \{+,-\}$. Assume that $\psi,A_{\mu} \in C([0,T],H^n)$ satisfy a priori assumption \eqref{eq:assumption-apriori}. Then
	\begin{align}
		\|\p_t f_\theta(t)\|_{H^{20}} & \les  \bra{t}^{-2+\frac{3\beta}2+\frac{3\de}2}\ve_1^2, \label{eq:esti-timederivative-d}\\ 
	\|\p_t F_{\mu,\ka}(t)\|_{H^{20}} & \les \bra{t}^{-1+\frac\beta2 +\frac\de2}\ve_1^2, \label{eq:esti-timederivative-m}
	\end{align}
where $f_\theta,\, F_{\mu,\ka}$ are defined in \eqref{eq:interaction}.
\end{lemma}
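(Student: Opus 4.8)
The plan is to estimate the time derivatives directly from the Duhamel-differentiated equations. Differentiating \eqref{eq:profile-dirac} and \eqref{eq:profile-maxwell} in $t$ (or equivalently reading off the integrands of \eqref{eq:interaction-dirac} and \eqref{eq:interaction-maxwell} at time $s=t$), we get
\begin{align*}
	\p_t f_{\theta_0}(t) &= i\sum_{\theta_1,\theta_2}e^{\theta_0 it\brad}\Pi_{\theta_0}(D)\big(A_{\mu,\theta_2}\alpha^\mu\psi_{\theta_1}\big)(t),\\
	\p_t F_{\mu,\ka_0}(t) &= -\ka_0\frac{i}2\sum_{\ka_1,\ka_2}e^{-\ka_0 it|D|}|D|^{-1}\bra{\psi_{\ka_1},\alpha_\mu\psi_{\ka_2}}(t).
\end{align*}
Since the propagators $e^{\pm it\brad}$ and $e^{\mp it|D|}$ are $L^2$-isometries commuting with $\bra{D}^{20}$, and $\Pi_{\theta_0}(D)$, $\alpha^\mu$ are bounded on $H^{20}$ (Lemma \ref{lem:projection-deri}), the first estimate reduces to bounding $\|A_{\mu,\theta_2}\alpha^\mu\psi_{\theta_1}\|_{H^{20}}$ and the second to bounding $\||D|^{-1}\bra{\psi_{\ka_1},\alpha_\mu\psi_{\ka_2}}\|_{H^{20}}$, for each sign choice and for $t$ fixed.

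For \eqref{eq:esti-timederivative-d}, I would estimate $\|A_{\mu,\theta_2}\psi_{\theta_1}\|_{H^{20}}$ by the fractional Leibniz rule, distributing the $20$ derivatives: in the term where the derivatives land on $\psi$ use $\|A_{\mu,\theta_2}\|_{L^\infty}\|\psi_{\theta_1}\|_{H^{20}}$, and in the term where they land on $A$ use $\|\psi_{\theta_1}\|_{L^\infty}\|A_{\mu,\theta_2}\|_{H^{20}}$ (and for $n$ large, $H^n \hookrightarrow W^{20,\infty}$ gives enough room, so really $W^{20,\infty}\cdot H^{20}$ on both sides). Now invoke the pointwise decay from Proposition \ref{prop:timedecay-maxwell}, $\|A_{\mu,\theta_2}(t)\|_{W^{20,\infty}}\les\bra{t}^{-3/2}\ve_1$, together with the a priori bound $\|A_{\mu,\theta_2}(t)\|_{H^n}\les\bra{t}^{\beta/2-\de/2}\ve_1$ from \eqref{eq:assumption-apriori}; and Proposition \ref{prop:timedecay}, $\|\psi_{\theta_1}(t)\|_{W^{20,\infty}}\les\bra{t}^{-2+\beta+\de}\ve_1$, with $\|\psi_{\theta_1}(t)\|_{H^n}\les\ve_1$. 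The dominant product is $\|\psi\|_{W^{20,\infty}}\|A\|_{H^{20}}\les\bra{t}^{-2+\beta+\de}\cdot\bra{t}^{\beta/2-\de/2}\ve_1^2=\bra{t}^{-2+3\beta/2+\de/2}\ve_1^2$, which is even slightly better than the claimed $\bra{t}^{-2+3\beta/2+3\de/2}$ (and in any case $\le$ it since $\de>0$); the other term $\|A\|_{W^{20,\infty}}\|\psi\|_{H^{20}}\les\bra{t}^{-3/2}\ve_1^2$ is smaller for large $t$. One must be a little careful at low frequency of $A$ because $F_{\mu,\ka}$ is only controlled in $\doth^1$ at the weighted level, but at the unweighted $H^n$ level \eqref{eq:assumption-apriori} controls $A_{\mu,\ka}$ directly, so no singularity appears here.

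For \eqref{eq:esti-timederivative-m}, the extra feature is the operator $|D|^{-1}$, which is a genuine low-frequency singularity in $\R^4$. I would split $|D|^{-1}\bra{\psi_{\ka_1},\alpha_\mu\psi_{\ka_2}}=P_{\le1}|D|^{-1}(\cdots)+P_{>1}|D|^{-1}(\cdots)$. On the high-frequency piece $|D|^{-1}$ is bounded (in fact smoothing), so $\|P_{>1}|D|^{-1}\bra{\psi,\alpha_\mu\psi}\|_{H^{20}}\les\|\psi\|_{W^{20,\infty}}\|\psi\|_{H^{20}}\les\bra{t}^{-2+\beta+\de}\ve_1^2$, which is far better than needed. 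For the low-frequency piece, by Bernstein and Hardy--Littlewood--Sobolev (Lemma \ref{lem:hls}) with $|x|^{-3}*$ in $\R^4$, $\|P_{\le1}|D|^{-1}\bra{\psi,\alpha_\mu\psi}\|_{L^2}\les\||D|^{-1}\bra{\psi,\alpha_\mu\psi}\|_{L^{4/3}}\les\|\psi\|_{L^2}\|\psi\|_{L^{8/3}}$, and interpolating $\|\psi\|_{L^{8/3}}$ between $\|\psi\|_{L^2}\les\ve_1$ and $\|\psi\|_{L^\infty}\les\bra{t}^{-2+\beta+\de}\ve_1$ gives a bound like $\bra{t}^{-(2-\beta-\de)/2}\ve_1^2=\bra{t}^{-1+\beta/2+\de/2}\ve_1^2$; since $P_{\le 1}$ localizes to bounded frequencies the $H^{20}$ norm of this piece is comparable to its $L^2$ norm. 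This matches the claimed $\bra{t}^{-1+\beta/2+\de/2}\ve_1^2$ exactly, so this low-frequency HLS step is the crux of the proof. The main obstacle, and the only place requiring genuine care, is precisely handling the $|D|^{-1}$ singularity so that it does not destroy the rate — everything else is a routine product estimate feeding on the already-established pointwise decay in Propositions \ref{prop:timedecay} and \ref{prop:timedecay-maxwell}.
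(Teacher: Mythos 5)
Your overall strategy (differentiate the Duhamel formulas, use that the propagators are $H^{20}$-isometries, then a product estimate for the Dirac part and a low-frequency/HLS argument for $|D|^{-1}$ in the Maxwell part) is the same as the paper's, but both of your key quantitative steps fall short of the stated rates. For \eqref{eq:esti-timederivative-d}: in the Leibniz term where the derivatives land on $\psi$ you bound it by $\|A_{\mu,\theta_2}\|_{L^\infty}\|\psi_{\theta_1}\|_{H^{20}}\les\bra{t}^{-3/2}\ve_1^2$ and declare it smaller than the main term. It is not: the claimed rate is $\bra{t}^{-2+\frac{3\beta}2+\frac{3\de}2}$ and the paper imposes $\beta+\de\le\frac14$, so $-2+\frac{3(\beta+\de)}2\le-\frac{13}8<-\frac32$; with your pairing the best you prove is $\bra{t}^{-3/2}\ve_1^2$, which does not give \eqref{eq:esti-timederivative-d}. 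The fix (and the paper's actual argument) is to put the $L^2$-based norm on $A$ and the $L^\infty$-based norm on $\psi$ in \emph{every} Leibniz term, i.e. $\|\nabla^kA\,\nabla^l\psi\|_{L^2}\le\|\nabla^kA\|_{L^2}\|\nabla^l\psi\|_{L^\infty}$ for all $k+l\le20$, so that $\|A_\mu\alpha^\mu\psi\|_{H^{20}}\les\|A_{\mu,\theta_2}\|_{H^{20}}\|\psi_{\theta_1}\|_{W^{20,\infty}}\les\bra{t}^{\frac\beta2+\frac\de2}\bra{t}^{-2+\beta+\de}\ve_1^2$, which is exactly the stated bound.

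For \eqref{eq:esti-timederivative-m}, the step you call the crux is also off. The chain $\|P_{\le1}|D|^{-1}\bra{\psi,\alpha_\mu\psi}\|_{L^2}\les\||D|^{-1}\bra{\psi,\alpha_\mu\psi}\|_{L^{4/3}}\les\|\psi\|_{L^2}\|\psi\|_{L^{8/3}}$ is not scaling-consistent: in $\R^4$, HLS maps $L^{8/7}\to L^{8/5}$ under $|D|^{-1}$ (an $L^{4/3}$ output would need the forbidden $L^1$ input), and even after repairing this via Bernstein $\|P_{\le1}h\|_{L^2}\les\|h\|_{L^{8/5}}$, the interpolation you invoke gives $\|\psi\|_{L^{8/3}}\les\|\psi\|_{L^2}^{3/4}\|\psi\|_{L^\infty}^{1/4}\les\bra{t}^{-\frac12+\frac{\beta+\de}4}\ve_1$, hence only $\bra{t}^{-\frac12+\frac{\beta+\de}4}\ve_1^2$ for the low-frequency piece — not the claimed $\bra{t}^{-1+\frac\beta2+\frac\de2}\ve_1^2$ (your asserted rate corresponds to $\|\psi\|_{L^\infty}^{1/2}$, i.e. an $L^4$ norm, not $L^{8/3}$). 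The correct (and the paper's) computation is the genuinely non-endpoint HLS bound $\||D|^{-1}\bra{\psi,\alpha_\mu\psi}\|_{L^2}\les\|\bra{\psi,\alpha_\mu\psi}\|_{L^{4/3}}\les\|\psi_{\ka_1}\|_{L^4}\|\psi_{\ka_2}\|_{L^2}$ together with $\|\psi\|_{L^4}\les\|\psi\|_{L^2}^{1/2}\|\psi\|_{L^\infty}^{1/2}\les\bra{t}^{-1+\frac\beta2+\frac\de2}\ve_1$; combined with your (correct) high-frequency estimate this yields \eqref{eq:esti-timederivative-m}. Both gaps are easily repaired, but as written neither estimate of the lemma is established at the stated rate.
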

\begin{proof}
	Let us consider \eqref{eq:esti-timederivative-d}. By Duhamel's principle \eqref{eq:profile-dirac}, we have
	\begin{align*}
		f_{\theta}(t) &= \psi_{0, \theta} + i\sum_{\theta_1, \theta_2 \in \{\pm \}}\int_0^t e^{\theta is\brad}\Pi_{\theta}(D)\big(A_{\mu, \theta_2} \alpha^\mu \psi_{\theta_1}\big)(s)\,ds,
	\end{align*}
	which implies that
	\begin{align*}
		\p_t f_{\theta}(t) &=  i\sum_{\theta_1, \theta_2 \in \{\pm \}} e^{\theta it\brad}\Pi_{\theta}(D)\big(A_{\mu, \theta_2} \alpha^\mu \psi_{\theta_1}\big)(t).
	\end{align*}
	Then we estimate
	\begin{align*}
		\|\p_t f_\theta(t)\|_{H^{20}} \les \|A_{\mu,\thet}(t)\|_{H^{20}} \|\psi_\theo(t)\|_{W^{20,\infty}} \les \bra{t}^{-2+\frac{3\beta}2+\frac{3\de}2}\ve_1^2.
	\end{align*}
	
	Let us move on to the proof of \eqref{eq:esti-timederivative-m}. Similarly, by \eqref{eq:profile-maxwell}, direct calculation leads us that
	\begin{align*}
		\p_tF_{\mu, \ka}(t) &=  -\ka \frac{i}2\sum_{\ka_1, \ka_2 \in \{\pm\}} e^{-\ka it|D|}|D|^{-1}\bra{\psi_{\ka_1}(t),\alpha_\mu\psi_{\ka_2}(t)}.
	\end{align*}
	Using \eqref{eq:hls}, one gets that
	\begin{align*}
		\normo{\p_tF_{\mu, \ka}(t)}_{H^{20}} \les \normo{\psi_\kao(t)}_{L^4}\normo{\psi_\kat(t)}_{H^{20}} \les \bra{t}^{-1+\frac\beta2 + \frac\de2}\ve_1^2,
	\end{align*}
	and the desired conclusion follows.
\end{proof}

\begin{lemma}\label{lem:1st-weight-high}
Let $\theta, \ka \in \{+,-\}$. Assume that $\psi,A_\mu \in C([0,T],H^n)$ satisfies a priori assumption \eqref{eq:assumption-apriori}. Then
	\begin{align}
		\|xf_\theta\|_{H^{20}} &\les \bra{t}^\de\ve_1,\label{eq:1st-wei-d}\\
		\||D|xF_{\mu,\ka}\|_{H^{20}} &\les \bra{t}^{\frac\beta2 + \frac\de2}\ve_1.\label{eq:1st-wei-m}
	\end{align}
\end{lemma}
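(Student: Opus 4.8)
Here is how I would approach Lemma \ref{lem:1st-weight-high}.

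The plan is to obtain both bounds from weighted Gagliardo--Nirenberg inequalities in Fourier space, so that no Duhamel iteration is needed and all time growth is inherited directly from the weighted norms in \eqref{eq:assumption-apriori}. Since $f_\theta = e^{\theta it\bra D}\psi_\theta$ and $F_{\mu,\ka} = e^{-\ka it|D|}A_{\mu,\ka}$ and these propagators are unitary on every $H^s$, the a priori assumption gives, for all $m\le n$,
\[
\|f_\theta(t)\|_{H^m}=\|\psi_\theta(t)\|_{H^m}\les\ve_1,\qquad \|F_{\mu,\ka}(t)\|_{H^m}=\|A_{\mu,\ka}(t)\|_{H^m}\les\bra t^{\frac\beta2-\frac\de2}\ve_1 ,
\]
together with $\|x^2 f_\theta(t)\|_{L^2}\les\bra t^{\de}\ve_1$, $\|xF_{\mu,\ka}(t)\|_{\dot H^1}\les\bra t^{\frac\beta2-\frac\de2}\ve_1$ and $\|x^2 F_{\mu,\ka}(t)\|_{\dot H^2}\les\bra t^{\frac18}\ve_1$. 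Writing $g=\widehat{f_\theta}(t,\cdot)$ or $g=\widehat{F_{\mu,\ka}}(t,\cdot)$ and using $x_j\leftrightarrow i\p_{\xi_j}$, we have $\|xf_\theta\|_{H^{20}}\sim\|\bra\xi^{20}\nabla_\xi g\|_{L^2_\xi}$ and $\||D|xF_{\mu,\ka}\|_{H^{20}}\sim\|\bra\xi^{20}|\xi|\nabla_\xi g\|_{L^2_\xi}$, while $\|\Delta_\xi g\|_{L^2}\les\|x^2 f_\theta\|_{L^2}$ and $\||\xi|^2\Delta_\xi g\|_{L^2}\les\|x^2 F_{\mu,\ka}\|_{\dot H^2}$.

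For \eqref{eq:1st-wei-d} I would integrate by parts in $\xi$:
\[
\int\bra\xi^{40}|\nabla_\xi g|^2\,d\xi=-\int\bra\xi^{40}(\Delta_\xi g)\,\bar g\,d\xi-\int\big(\nabla_\xi\bra\xi^{40}\big)\cdot(\nabla_\xi g)\,\bar g\,d\xi .
\]
The first term is $\le\|\Delta_\xi g\|_{L^2}\|\bra\xi^{40}g\|_{L^2}\les\|x^2 f_\theta\|_{L^2}\|f_\theta\|_{H^{40}}\les\bra t^{\de}\ve_1^2$ (here $40\le n$ is used); since $|\nabla_\xi\bra\xi^{40}|\les\bra\xi^{39}$, the second term is $\les\|\bra\xi^{20}\nabla_\xi g\|_{L^2}\|f_\theta\|_{H^{19}}$, absorbed into the left-hand side by Young's inequality up to $C\ve_1^2$. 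Hence $\|xf_\theta\|_{H^{20}}^2\les\bra t^{\de}\ve_1^2$, i.e. $\|xf_\theta\|_{H^{20}}\les\bra t^{\de/2}\ve_1\le\bra t^{\de}\ve_1$.

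For \eqref{eq:1st-wei-m} the factor $|\xi|$ must be kept throughout, because the a priori assumption controls $xF_{\mu,\ka}$ and $x^2F_{\mu,\ka}$ only in homogeneous norms, so $\nabla_\xi\widehat{F_{\mu,\ka}}$ and $\Delta_\xi\widehat{F_{\mu,\ka}}$ need not lie in $L^2$ near $\xi=0$; this is exactly why the statement carries $|D|$ rather than just $x$. Set $m=\lfloor n/2\rfloor$ (so $20<m$ and $2m\le n$). The same integration by parts with the globally smooth weight $\bra\xi^{2m}|\xi|^2$, now estimating $|\xi|^2\Delta_\xi g$ (not $\Delta_\xi g$) against $\bra\xi^{2m}g$ and using $|\nabla_\xi(\bra\xi^{2m}|\xi|^2)|\les\bra\xi^{2m}|\xi|$ with Young, yields
\[
\|\bra\xi^{m}|\xi|\nabla_\xi g\|_{L^2}^2\les\||\xi|^2\Delta_\xi g\|_{L^2}\|\bra\xi^{2m}g\|_{L^2}+\|F_{\mu,\ka}\|_{H^{m}}^2\les\bra t^{\frac18+\frac\beta2-\frac\de2}\ve_1^2 .
\]
Interpolating via H\"older between this and $\||\xi|\nabla_\xi g\|_{L^2}=\|xF_{\mu,\ka}\|_{\dot H^1}\les\bra t^{\frac\beta2-\frac\de2}\ve_1$ gives $\|\bra\xi^{20}|\xi|\nabla_\xi g\|_{L^2}\le\|\bra\xi^{m}|\xi|\nabla_\xi g\|_{L^2}^{20/m}\||\xi|\nabla_\xi g\|_{L^2}^{1-20/m}\les\bra t^{E}\ve_1$ with $E=\big(\tfrac\beta2-\tfrac\de2\big)+\tfrac{20}{m}\big(\tfrac1{16}-\tfrac\beta4+\tfrac\de4\big)$; since the unweighted factor carries the favorable exponent $\tfrac\beta2-\tfrac\de2<0$, while $\tfrac{20}{m}\sim\tfrac{40}{n}$ is tiny and $\tfrac1{16}-\tfrac\beta4+\tfrac\de4\le\tfrac18$, the constraint $\tfrac{56}{n-2}\le\beta<\de$ from \eqref{eq:condi-parameter} forces $E\le\tfrac\beta2+\tfrac\de2$, so $\||D|xF_{\mu,\ka}\|_{H^{20}}\les\bra t^{\frac\beta2+\frac\de2}\ve_1$.

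The Dirac bound is thus immediate; the only delicate point is the Maxwell case, where one must (i) never discard the $|\xi|$-weights in the integration by parts — otherwise one would need $x^2F_{\mu,\ka}\in L^2$, which is not assumed — and (ii) interpolate \emph{towards} the unweighted end, so as to pick up the good decay $\bra t^{\beta/2-\de/2}$ rather than the weaker $\bra t^{1/8}$ carried by the $\dot H^2$-norm. Both of these are precisely what the operator $|D|$ in the statement makes possible.
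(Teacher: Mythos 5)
Your argument is correct, and for the Dirac bound \eqref{eq:1st-wei-d} it is essentially the paper's proof: one integration by parts in $\xi$ converts $|\nabla_\xi\widehat{f_\theta}|^2$ into $\Delta_\xi\widehat{f_\theta}$ paired against $\bra{\xi}^{40}\widehat{f_\theta}$, controlled by $\|x^2f_\theta\|_{L^2}\|\psi_\theta\|_{H^{40}}\les\bra{t}^{\de}\ve_1^2$; the only cosmetic difference is that you absorb the cross term by Young, while the paper estimates it directly as $\|xf_\theta\|_{L^2}\|\psi_\theta\|_{H^{40}}$, which avoids having to presuppose the qualitative finiteness of $\|xf_\theta\|_{H^{20}}$ (your absorption is fine after a standard density/approximation argument, or you can simply redistribute the weight as the paper does). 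For the Maxwell bound \eqref{eq:1st-wei-m} you take a genuinely different route. The paper performs the same integration by parts directly at the $H^{20}$ level with the weight $\bra{\xi}^{20}|\xi|$ and bounds the three resulting terms by $\|A_{\mu,\ka}\|_{H^n}^2+\|xF_{\mu,\ka}\|_{\doth^1}\|A_{\mu,\ka}\|_{H^n}+\|x^2F_{\mu,\ka}\|_{\doth^2}\|A_{\mu,\ka}\|_{H^n}$ and then concludes from the a priori bounds, so the $\bra{t}^{\frac18}$ growth of $\|x^2F_{\mu,\ka}\|_{\doth^2}$ must be absorbed outright by the $(\beta,\de)$ exponents, an extra numerical constraint that holds for the quoted candidate parameters but is not literally among the conditions \eqref{eq:condi-parameter}. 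You instead prove the weighted estimate at regularity $m=\lfloor n/2\rfloor$ and then interpolate (H\"older in $\xi$) against the unweighted a priori quantity $\||\xi|\nabla_\xi\widehat{F_{\mu,\ka}}\|_{L^2_\xi}=\|xF_{\mu,\ka}\|_{\doth^1}$, so the $\bra{t}^{\frac18}$ factor enters only through the tiny exponent $20/m\sim 40/n$ and is dominated using $\frac{56}{n-2}\le\beta<\de$; I checked the exponent bookkeeping and your conclusion $E\le\frac\beta2+\frac\de2$ does follow from \eqref{eq:condi-parameter} alone. What your detour buys is robustness in the parameter regime (no hidden comparison of $\frac18$ with $\beta,\de$), at the cost of an extra interpolation step; both proofs share the essential structural point, which you state explicitly, that the $|\xi|$-weight must never be separated from $\nabla_\xi\widehat{F_{\mu,\ka}}$ or $\Delta_\xi\widehat{F_{\mu,\ka}}$, since only the homogeneous weighted norms of $F_{\mu,\ka}$ are assumed.
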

\begin{proof}
By Plancherel's theorem, we see that	
	\begin{align*}
		\|xf_\theta\|_{H^{20}} \les \|\left(\nabla_\xi\bra{\xi}^{20}\right)\wh{f_\theta}\|_{L_\xi^2} + \normo{\nabla_\xi \left(\bra{\xi}^{20}\wh{f_\theta}\right)}_{L_\xi^2}.
	\end{align*}
Since the first term is bounded by $\normo{\psi_\theta}_{H^n}$, we focus on the second term.
\begin{align*}
	&\int_{\R^4} \bra{\nabla_\xi \left(\bra{\xi}^{20}\wh{f_\theta}\right),\nabla_\xi \left(\bra{\xi}^{20}\wh{f_\theta}\right)}   d\xi\\
	 &= \int_{\R^4} \bra{(-\Delta_\xi) \left(\bra{\xi}^{20}\wh{f_\theta}\right), \bra{\xi}^{20}\wh{f_\theta}}   d\xi\\
	&=\int_{\R^4} \bra{\left[(-\Delta_\xi) \bra{\xi}^{20}\right]\wh{f_\theta}, \bra{\xi}^{20}\wh{f_\theta}}   d\xi + \int_{\R^4} \bra{\nabla_\xi \bra{\xi}^{20}\cdot\nabla_\xi\wh{f_\theta}, \bra{\xi}^{20}\wh{f_\theta}}   d\xi \\
	&\hspace{7cm} 	+\int_{\R^4} \bra{-\Delta_\xi\wh{f_\theta}, \bra{\xi}^{40}\wh{f_\theta}}   d\xi\\
	& \les  \|\psi_\theta\|_{H^{20}}^2+ \|xf_\theta\|_{L^2}\|\psi_\theta\|_{H^{40}} + \|x^2 f_\theta\|_{L^2}\|\psi_\theta\|_{H^{40}}\\
	& \les  \ve_1^2+ \bra{t}^{\de}\ve_1^2.
\end{align*}
Since $0<\ve_1<1$, this completes the proof of \eqref{eq:1st-wei-d}.

	Let us move on to \eqref{eq:1st-wei-m} Similarly,
	\begin{align*}
		\||D|xF_{\mu,\ka}\|_{H^{20}} \les \|\left(\nabla_\xi\bra{\xi}^{20}|
		\xi|\right)\wh{F_{\mu,\ka}}\|_{L_\xi^2} + \normo{\nabla_\xi \left(\bra{\xi}^{20}|\xi|\wh{F_{\mu,\ka}}\right)}_{L_\xi^2},
	\end{align*}
	and the first term can be bounded directly. For the second term, we have
	\begin{align*}
		&\int_{\R^4} \bra{\nabla_\xi \left(\bra{\xi}^{20}|\xi|\wh{F_{\mu,\ka}}\right),\nabla_\xi \left(\bra{\xi}^{20}|\xi|\wh{F_{\mu,\ka}}\right)}   d\xi\\
		&=\int_{\R^4} \bra{\left[(-\Delta_\xi) \bra{\xi}^{20}|\xi|\right]\wh{F_{\mu,\ka}}, \bra{\xi}^{20}|\xi|\wh{F_{\mu,\ka}}}   d\xi + \int_{\R^4} \bra{\nabla_\xi( \bra{\xi}^{20}|\xi|)\cdot\nabla_\xi\wh{F_{\mu,\ka}}, \bra{\xi}^{20}|\xi|\wh{F_{\mu,\ka}}}   d\xi \\
		&\hspace{7cm} 	+\int_{\R^4} \bra{-|\xi|^2\Delta_\xi\wh{F_{\mu,\ka}}, \bra{\xi}^{40}\wh{F_{\mu,\ka}}}   d\xi\\
		& \les  \|A_{\mu,\ka}\|_{H^{n}}^2+ \|xF_{\mu,\ka}\|_{\doth^1}\|A_{\mu,\ka}\|_{H^{n}} + \|x^2 F_{\mu,\ka}\|_{\doth^2}\|A_{\mu,\ka}\|_{H^{n}}\\
		& \les   \bra{t}^{\frac\beta2+\frac\de2}\ve_1^2.
	\end{align*}
	This finishes the proof of \eqref{eq:1st-wei-m}.
\end{proof}

\section{Proof of the main theorem}\label{sec:mainproof}

Given any $T > 0$, let $\psi, A_{\mu}$ be solutions with initial data satisfying \eqref{condition-initial} on $[0,T]$. For some small $0< \varepsilon_1 <1$ and $\theta,\ka \in \{ +,-\}$ we assume that
	\[
	\|\psi_{\theta}\|_{\Sigma_{T,\theta}^\textbf{D}}, \|A_{\mu, \ka}\|_{\Sigma_{T,\ka}^\textbf{M}} \le K\ve_{1}.
	\]
	Then there exists $C$ depending only on $K$ such that
	\begin{align}
		\|\psi_{\theta}\|_{\Sigma_{T,\theta}^\textbf{D}}, \|A_{\mu, \ka}\|_{\Sigma_{T,\ka}^\textbf{M}} \le \ve_{0} + C \ve_{1}^{2}.\label{claim}
	\end{align}
	From \eqref{claim}, Propositions \ref{prop:timedecay} and \ref{prop:timedecay-maxwell} exhibit the global bound \eqref{thm:decay}. 	The claim \eqref{claim} will turn out to follow from Propositions \ref{prop:energy-esti-d} and \ref{prop:energy-esti-m} below. Then the a priori estimate with a bootstrap argument yields the global existence for sufficiently small $\ve_0$. Moreover, by the time reversibility, we obtain a solution for all times.
	
Now we introduce the following energy estimates.
\begin{prop}\label{prop:energy-esti-d} Let  $\theta \in \{ + , - \}$. Assume
that  $\psi, A_{\mu} \in C([0,T], H^n)$ satisfy the a priori assumption \eqref{eq:assumption-apriori}.  Let $\psi_{\theta} \in C([0,T], H^n)$ be a solution to \eqref{eq:half-dirac} with initial data satisfying \eqref{condition-initial}. Then we obtain the following
estimates:
\begin{align}
  \sup_{t\in[0,T]}\|\psi_{\theta}(t)\|_{H^{n}} &\le \ve_{0} + C\ve_{1}^2,\label{eq:high-d}\\
 \sup_{t\in[0,T]}\|x f_{\theta}(t)\|_{L^{2}} &\le \ve_{0} + C\ve_{1}^{2},\label{eq:1st-moment-d}\\
  \sup_{t\in[0,T]} \bra{t}^{-\de}\| x^{2}f_{\theta}(t)\|_{L^2} &\le \ve_{0} + C\ve_{1}^{2}.\label{eq:2nd-moment-d}
\end{align}
\end{prop}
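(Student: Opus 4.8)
The plan is to prove the three bounds \eqref{eq:high-d}, \eqref{eq:1st-moment-d}, \eqref{eq:2nd-moment-d} by working on the Fourier side with the profile $f_\theta$, using the Duhamel representation \eqref{eq:profile-dirac} and its frequency form \eqref{eq:interaction-dirac}. The common starting point: since $\psi_\theta = e^{-\theta it\bra D} f_\theta$ and $e^{-\theta it\bra D}$ is unitary on every $H^s$, estimating $\|\psi_\theta(t)\|_{H^n}$ is the same as estimating $\|f_\theta(t)\|_{H^n}$, and similarly the weighted quantities $\|x f_\theta\|_{L^2}$, $\|x^2 f_\theta\|_{L^2}$ become, after Plancherel, $L^2_\xi$ bounds on $\nabla_\xi\wh{f_\theta}$ and $\nabla_\xi^2\wh{f_\theta}$. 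In each case one writes $\wh{f_\theta}(t) = \wh{\psi_{0,\theta}} + i\sum_{\theta_1,\theta_2}\mathbf f_{\,\Theta}(t,\xi)$; the free data contributes $\le \ve_0$ by \eqref{condition-initial} (for the weighted pieces one uses $\|\bra x^2\psi_0\|_{L^2}<\ve_0$ after checking that applying $\nabla_\xi^k$ to $\bra\xi^n\wh{\psi_{0,\theta}}$ costs only lower-order weights, as in Lemma \ref{lem:1st-weight-high}), and everything reduces to showing the Duhamel term $\mathbf f_{\,\Theta}$, with the relevant $\xi$-derivatives, is $O(\ve_1^2)$ uniformly in $t$ (resp. $O(\bra t^\de \ve_1^2)$ for the second moment).

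For \eqref{eq:high-d} I would apply $\bra\xi^n$ under the integral in \eqref{eq:interaction-dirac}, distribute $\bra\xi^n \les \bra{\xi-\eta}^n + \bra\eta^n$, and bound the resulting bilinear expression in $L^2_\xi$ by Coifman–Meyer (Lemma \ref{lem:coif-mey}, with the multiplier $\Pi_{\theta_0}(\xi)\alpha^\mu$ which is $\cm$-bounded by Lemma \ref{lem:projection-deri} and Remark \ref{rem:coif-meyer}), getting $\int_0^t \|A_{\mu,\theta_2}(s)\|_{W^{?,\infty}}\|\psi_{\theta_1}(s)\|_{H^n}\,ds$ or the symmetric split; by the time-decay Propositions \ref{prop:timedecay}, \ref{prop:timedecay-maxwell} and the a priori assumption \eqref{eq:assumption-apriori} the integrand is $\les \bra s^{-3/2}\ve_1^2$ (using $\|A_{\mu,\theta_2}\|_{L^\infty}\les\bra s^{-3/2}\ve_1$ to pair against $\|\psi_{\theta_1}\|_{H^n}\les\ve_1$, and $\|A_{\mu,\theta_2}\|_{H^n}\les\bra s^{\beta/2-\delta/2}\ve_1$ against $\|\psi_{\theta_1}\|_{W^{n',\infty}}\les\bra s^{-2+\beta+\delta}\ve_1$), which is time-integrable. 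For the weighted estimates \eqref{eq:1st-moment-d}, \eqref{eq:2nd-moment-d} the new feature is that $\nabla_\xi$ (or $\nabla_\xi^2$) can hit (i) the profiles $\wh{f_{\theta_1}}$, $\wh{F_{\mu,\theta_2}}$ — producing weighted profiles controlled by \eqref{eq:assumption-apriori} and Lemma \ref{lem:1st-weight-high}; (ii) the symbol $\Pi_{\theta_0}(\xi)$ — harmless by Lemma \ref{lem:projection-deri}; or (iii) the oscillatory phase $e^{isp_\Theta}$, bringing down a factor $is\nabla_\xi p_\Theta$ (or $s^2$, or $s\nabla_\xi^2 p_\Theta$) which is the dangerous term. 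Here one must either integrate by parts in $s$ using $\p_s(e^{isp_\Theta}) = ip_\Theta e^{isp_\Theta}$ together with the non-degeneracy $|p_\Theta|\gtrsim\bra\xi^{-1}(\cdots)^{-1}$ from \eqref{eq:resonance-time}–\eqref{eq:nonresonance-time} (transferring the $s$-growth to boundary terms plus a $\p_s$ falling on $\p_sf$, controlled by Lemma \ref{lem:time-derivative}), or exploit that $\nabla_\xi p_\Theta = \theta_0\xi/\bra\xi$ depends only on $\xi$ and combine it with the difference structure, so that the $s$-factor is compensated by an extra $\bra s^{-1}$ from a gain in the time-decay of one factor.

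The main obstacle, as the authors flag in Section \ref{sec:ideas}, is precisely the absence of a null structure: the phase $p_\Theta$ is genuinely non-resonant (so there are no space or space-time resonances, $\mathcal S_{p_\Theta}=\mathcal R_{p_\Theta}=\emptyset$) but the bilinear symbol after expanding $\alpha^\mu\Pi_{\theta_0}(\xi) = \Pi_{-\theta_0}(\xi)\alpha^\mu + \theta_0\xi_j/|\xi|$ contains a Riesz-type piece $\xi_j/|\xi|$ that does not vanish on the resonant set, and the homogeneous-Sobolev a priori norm imposed on the Maxwell part (which is what supplies the missing cancellation on the Maxwell side) simultaneously introduces a $|\eta|^{-1}$ singularity into $\mathbf f_{\,\Theta}$ via $\wh{F_{\mu,\theta_2}}$. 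So the delicate point in \eqref{eq:1st-moment-d}–\eqref{eq:2nd-moment-d} is handling the low-frequency region $|\eta|\ll1$ in the Maxwell input together with the time weight $s$ (and $s^2$) coming from $\nabla_\xi$ (and $\nabla_\xi^2$) hitting the phase: one pays a small negative power of $|\eta|$, must recover integrability in $s$ from the sharp $\bra s^{-3/2}$ decay of $A_\mu$ in $L^\infty$ (Proposition \ref{prop:timedecay-maxwell}) and the almost-optimal $\bra s^{-2+}$ decay of $\psi$ (Proposition \ref{prop:timedecay}) — which is exactly where the dimension $d=4$ and the large regularity $n$ are used — and must frequency-localize ($N$-decompositions in $\xi,\eta,\xi-\eta$ as in the proof of Proposition \ref{prop:timedecay}) to turn the $|\eta|^{-1}$ and the $\bra\xi$-losses in $|p_\Theta|^{-1}$ into convergent dyadic sums. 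I expect \eqref{eq:2nd-moment-d} to be the hardest, since two $\xi$-derivatives can both land on the phase giving an $s^2$ loss, and this is the reason only the weaker bound $\bra t^{-\delta}\|x^2 f_\theta\|_{L^2}\les\ve_0+C\ve_1^2$ (with the $\bra t^\delta$ slack) is claimed.
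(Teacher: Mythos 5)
Your outline for \eqref{eq:high-d} and \eqref{eq:1st-moment-d} follows essentially the paper's route (Duhamel for the profile, derivative landing on profile/symbol/phase, normal form in time via $|p_\Theta|$ lower bounds, $\partial_s$-terms via Lemma \ref{lem:time-derivative}, Coifman--Meyer bounds); for those two estimates the plan closes. One caveat even there: in the resonant case $\theta_0=\theta_1$ the bound \eqref{eq:resonance-time} degenerates like $|\eta|$ as $\eta\to 0$, and what saves the normal form is not a ``small negative power of $|\eta|$ plus dyadic summation'' but the exact null cancellation $|\nabla_\xi p_\Theta|=|\theta_0\xi/\bra{\xi}-\theta_1(\xi-\eta)/\bra{\xi-\eta}|\les |\eta|/(\bra{\xi}+\bra{\xi-\eta})$ against $1/p_\Theta$, giving the clean multiplier bound \eqref{eq:1st-dirac-multi}; your statement that $\nabla_\xi p_\Theta=\theta_0\xi/\bra{\xi}$ ``depends only on $\xi$'' is incorrect, and the suggested alternative of recovering the factor $s$ from ``a gain in the time decay of one factor'' has no source of such a gain.

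The genuine gap is in \eqref{eq:2nd-moment-d}. For the term \eqref{eq:2nd-dirac-main}, with weight $s^2[\nabla_\xi p_\Theta]^2$, your only mechanism is integration by parts in time; but the resulting boundary term at time $t$ (the analogue of \eqref{eq:2nd-dirac-1} and \eqref{eq:2-dirac-non-a}) still carries $t^2$ against a single bilinear expression evaluated at time $t$, and no pairing of the available norms closes it: $t^2\|A_\mu\|_{H^n}\|\psi\|_{W^{20,\infty}}\les \bra{t}^{\frac{3\beta}{2}+\frac{3\de}{2}}\ve_1^2$ and $t^2\|A_\mu\|_{W^{20,\infty}}\|\psi\|_{H^n}\les \bra{t}^{\frac12}\ve_1^2$, both of which exceed the admissible $\bra{t}^{\de}\ve_1^2$. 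The paper's proof closes this term only by a second, spatial integration by parts in $\eta$, exploiting the space non-resonance \eqref{eq:nonresonance-space} (which you note holds but never use operationally), converting $t^2$ into $t$ at the price of weighted profiles $x f_\theta$, $xF_{\mu,\theta_2}$, $x^2F_{\mu,\theta_2}$ controlled by the a priori assumption and Lemma \ref{lem:1st-weight-high}, together with dyadic decompositions in all three frequencies, cutoffs of the type $\bra{\nmax}\le\bra{t}^{3/n}$ and $\nmin\gtrsim\bra{t}^{-1}$, and Bernstein-type losses absorbed by the large regularity $n$ (this is the bulk of Section \ref{sec:2nd-dirac}, e.g.\ the passage from \eqref{eq:2nd-dirac-1} to \eqref{eq:2nd-dirac-1-a}--\eqref{eq:2nd-dirac-1-c}). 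Without this combined time--space normal form your argument cannot recover the second factor of $t$, so the proposal as written does not prove \eqref{eq:2nd-moment-d}.
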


\begin{prop}\label{prop:energy-esti-m}
Let $\ka \in \{+,-\}$. Assume
that $\psi, A_\mu$ satisfy the a priori bound \eqref{eq:timedecay-d} for given $\ve_{1}$ and $T$. Let $A_{\mu, \ka} \in C([0,T], H^n)$ be a solution to \eqref{eq:half-maxwell} with initial data satisfying \eqref{condition-initial}. Then for a sufficiently small $\ve_1$ there exists $C$
satisfying that if $1 \le \ell \le n$, then
\begin{align}
 \sup_{t\in[0,T]} \left[\bra{t}^{-\frac\beta2 - \frac\de2}\|A_{\mu, \ka}(t)\|_{H^n} + \| A_{\mu, \ka}(t)\|_{\dot H^\ell} \right] &\le \ve_0 + C\ve_1^2,\label{eq:high-m}\\
\sup_{t\in[0,T]} \bra{t}^{-\frac\beta2 -\frac\de2}\|x F_{\mu, \ka}(t)\|_{\dot H^1}   &\le \ve_{0} + C\ve_{1}^{2},\label{eq:1st-moment-m}\\
 \sup_{t\in[0,T]}\bra{t}^{-\frac18}\| x^{2}F_{\mu, \ka}(t)\|_{\dot H^{2}} &\le \ve_{0} + C\ve_{1}^{2}.\label{eq:2nd-moment-m}
\end{align}
\end{prop}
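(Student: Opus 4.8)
The plan is to close the three weighted energy bounds for the Maxwell profile $F_{\mu,\ka}$ by inserting the Duhamel representation \eqref{eq:interaction-maxwell} and exploiting the phase bounds \eqref{eq:resonance-time-maxwell}, \eqref{eq:resonance-space-maxwell} together with the time decay and weighted estimates already proven in Section \ref{sec:pre}. For \eqref{eq:high-m}, the homogeneous part is handled by the data assumption \eqref{condition-initial}; for the Duhamel part one writes $\|A_{\mu,\ka}(t)\|_{H^n}$ (resp.\ $\doth^\ell$) via \eqref{eq:profile-maxwell}, bounds the bilinear term $|D|^{-1}\bra{\psi_{\ka_1},\al_\mu\psi_{\ka_2}}$ by Hardy--Littlewood--Sobolev (Lemma \ref{lem:hls}), placing the low-regularity factor in $L^\infty$ via Proposition \ref{prop:timedecay} (decay $\brat^{-2+\beta+\de}$) and the other factor in $H^n$. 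The time integral $\int_0^t \brat^{-2+\beta+\de}\bra{s}^{?}\,ds$ converges because $-2+\beta+\de$ beats the growth coming from the weighted $H^n$ norm of $\psi$, using $\beta+\de\le\frac14$. The crux of \eqref{eq:high-m} is that the homogeneous Sobolev norm $\doth^\ell$ avoids the $|\xi|^{-1}$ singularity because for $\ell\ge1$ we can absorb one derivative; near $\xi=0$ one splits into $|\xi|\le1$ and $|\xi|\ge1$ and uses the dimensional gain (the $|D|^{-1}$ is integrable in $\R^4$ against the bilinear source via HLS with $k=1$).

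For the first weighted bound \eqref{eq:1st-moment-m}, I would differentiate \eqref{eq:interaction-maxwell} in $\xi$. Three types of terms arise: (i) $\nabla_\xi$ hitting the amplitude $|\xi|^{-1}$, producing $|\xi|^{-2}$, which is still controlled in $\doth^1$ by the extra $|\xi|$ weight; (ii) $\nabla_\xi$ hitting one of the profiles $\wh{f_{\ka_j}}$, producing $\wh{xf_{\ka_j}}$ whose $L^2$-based norms are bounded by $\bra{s}^{0}$ (Dirac, $\|xf_\theta\|_{L^2}\les\ve_1$) — no growth; (iii) $\nabla_\xi$ hitting the phase $e^{isq_K}$, producing a factor $s\,\nabla_\eta q_K$ — wait, this is $\nabla_\xi$ not $\nabla_\eta$, so it produces $s\,\nabla_\xi q_K$, which is the dangerous term. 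Here one exploits that $\nabla_\xi q_K = -\ka_0\frac{\xi}{|\xi|} - \ka_1\frac{\xi+\eta}{\bra{\xi+\eta}}$ is bounded, so the worst contribution is $s\times(\text{bilinear in }\psi)$; with the Dirac decay $\brat^{-2+\beta+\de}$ in $L^\infty$ the integrand is $\les s\cdot s^{-2+\beta+\de}\cdot\|\psi_{\ka_2}\|_{H^{20}}\les s^{-1+\beta+\de}$, integrating to $\brat^{\beta+\de}$, which is consistent with the claimed $\brat^{\frac\beta2+\frac\de2}$ only if... in fact one must be more careful and split frequency: for the resonant sign $\ka_1=\ka_2$ use \eqref{eq:resonance-time-maxwell} to trade $s$ for $q_K^{-1}$ via integration by parts in $s$ (normal-form / writing $se^{isq_K}=\p_s(\cdots)/(iq_K)+\cdots$), picking up $\p_s F_{\mu,\ka}$ bounds from Lemma \ref{lem:time-derivative}. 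This integration-by-parts-in-time against the time-nonresonant region, combined with the Coifman--Meyer bound (Lemma \ref{lem:coif-mey}, Remark \ref{rem:coif-meyer}) to control the resulting multipliers $q_K^{-1}\bra{\eta}^{-1}$, is the standard space-time resonance mechanism.

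The second weighted bound \eqref{eq:2nd-moment-m} is the hardest and is where I expect the main obstacle: applying $\nabla_\xi^2$ to \eqref{eq:interaction-maxwell} produces up to $s^2 |\nabla_\xi q_K|^2$ and $s\,|\nabla_\xi^2 q_K|$ terms, as well as cross terms $s\,\nabla_\xi q_K\otimes\nabla_\xi(\text{profile})$ giving $s\,\wh{xf}$. The factor $s^2$ must be tamed: one uses either two integrations by parts in $s$ on the time-nonresonant piece (gaining $q_K^{-2}$, and hence needing good control of $\p_s^2$-type quantities, or iterating Lemma \ref{lem:time-derivative}), or, on the space-resonant piece $\{\xi=2\eta\}$ for $\ka_1\ne\ka_2$, one localizes to that set and uses that there the temporal phase is non-vanishing (by \eqref{eq:resonance-time-maxwell}, since $\ka_1\ne\ka_2$ forces $\mathcal R_{q_K}=\emptyset$) to integrate by parts in $s$. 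The allowed growth $\brat^{1/8}$ on the right of \eqref{eq:2nd-moment-m} is generous, so after each integration by parts one only needs to lose a small power; the bookkeeping — tracking which of the $\sim 3^3$ sign/resonance combinations needs which normal form, and verifying every resulting multiplier (products of $q_K^{-1}$, $|\xi|^{-1}$, $\bra{\eta}^{-1}$, $\Pi_{\ka}$-type factors) has finite $\mathrm{CM}$ norm — is the genuinely laborious part, but each individual estimate reduces to HLS, Coifman--Meyer, Proposition \ref{prop:timedecay}, Lemma \ref{lem:time-derivative}, and Lemma \ref{lem:1st-weight-high}. The homogeneous data contribution to all three bounds is $\le\ve_0$ by \eqref{condition-initial}, and the nonlinear contributions are $\le C\ve_1^2$, which yields \eqref{claim} and closes the bootstrap.
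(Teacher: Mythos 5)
Your overall strategy (Fourier/profile representation \eqref{eq:interaction-maxwell}, distributing the weights $|\xi|\nabla_\xi$ and $|\xi|^2\nabla_\xi^2$, a normal form in time for the terms carrying $s$ and $s^2$, sign-by-sign resonance analysis, Coifman--Meyer and HLS, and Lemma \ref{lem:time-derivative}) is the same as the paper's, but there is a genuine gap at the core step of \eqref{eq:1st-moment-m}. After integrating by parts in time in the term with $s\,\nabla_\xi q_K$, the multiplier to be controlled is $\nabla_\xi q_K/q_K$ (the paper's \eqref{eq:1-maxwell-c-1}--\eqref{eq:1-maxwell-c-3}), and in the time-resonant case $\kao=\kat$ the lower bound \eqref{eq:resonance-time-maxwell} only gives $|q_K|\gtrsim |\xi|/\big(\bra{\eta}(\bra{\xi}+\bra{\xi+\eta}+\bra{\eta})\big)$; hence the multiplier behaves like $|\xi|^{-1}$ near the time-resonant set $\{\xi=0\}$, it is not of the form $q_K^{-1}\bra{\eta}^{-1}$, and it does not have a finite ${\rm CM}$ norm, so Lemma \ref{lem:coif-mey} cannot be applied as you assert. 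This is exactly where the paper works: it localizes all three frequencies dyadically, treats the region $\nmin\le\bra{t}^{-1+\frac\beta2+\frac\de2}$ by H\"older (the small frequency support absorbs the $N_0^{-1}$), and on the complementary region performs a further integration by parts in $\eta$, using the space non-resonance \eqref{eq:resonance-space-maxwell}, which trades the singular multiplier for the uniformly bounded weighted norms $xf_{\ka_j}$. Without this low-frequency splitting and the $\eta$-integration by parts, the sum over $N_0^{-1}$ diverges and the bound does not close at the claimed rate $\bra{t}^{\frac\beta2+\frac\de2}$. Note also that after the time integration by parts the $\p_s$ lands on the Dirac profiles, so the input is \eqref{eq:esti-timederivative-d} for $\p_s f_\theta$, not a bound on $\p_s F_{\mu,\ka}$.

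Two smaller points. For \eqref{eq:high-m}, your claim that the time integral converges is not correct for the low-frequency ($L^2$) component: there the source can only be placed in $L^{4/3}$, giving $\bra{s}^{-1+\frac\beta2+\frac\de2}$, which is not integrable; this is precisely why \eqref{eq:high-m} carries the prefactor $\bra{t}^{-\frac\beta2-\frac\de2}$ on $\|A_{\mu,\ka}\|_{H^n}$ while the $\doth^\ell$ norms, $\ell\ge1$, are uniformly bounded, so your argument should produce and use this growth rather than assert convergence. For \eqref{eq:2nd-moment-m}, your proposal of a second time integration by parts (or localization to $\{\xi=2\eta\}$) is heavier than needed: the $\doth^2$ weight leaves a factor $|\xi|$ in front of the Duhamel term, which cancels the $|\xi|^{-1}$ singularity of $q_K^{-1}$ in the resonant case, so the multiplier $|\xi|[\nabla_\xi q_K]^2/q_K$ is ${\rm CM}$-bounded and a single normal form, together with the generous $\bra{t}^{\frac18}$ allowance and Lemma \ref{lem:time-derivative}, already closes this estimate.
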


\subsection{Estimates for high Sobolev part}\label{sec:high} We begin with the estimates of \eqref{eq:high-d} and \eqref{eq:high-m}. In fact, the  energy estimates \eqref{eq:high-d} and \eqref{eq:high-m} can be readily shown as follows: for $\theta \in \{+,-\}$,
\begin{align*}
	\|\psi_{\theta}(t)\|_{H^n} &\le \|\psi_{0, \theta}\|_{H^n} + C\int_0^t \|\left(A_{\mu}\alpha^\mu \psi \right)(s)\|_{H^n}\,ds\\
	&\le \ve_0 + C\int_0^t \left( \left\|A_\mu(s) \right\|_{H^n} \|\psi(s)\|_{L^\infty} + \left\|A_\mu (s) \right\|_{L^\infty} \|\psi(s)\|_{H^n} \right)\,ds\\
	&\le \ve_0 + C\ve_1^2 \int_0^t \left( \bra{s}^{-2 + \frac{3\beta}2 + \frac{3\de}2} + \bra{s}^{-\frac32}  \right)\,ds\\
	&\le \ve_0 + C\ve_1^2.
\end{align*}

Similarly, if $\ell \ge 1$, then we get
\begin{align}\label{eq:l2-maxwell-1}
	\begin{aligned}
	\|A_{\mu, \kappa}(t)\|_{\dot H^\ell} &\le \|a_{ \mu, \kappa}\|_{\dot H^\ell} + C\int_0^t \|\bra{\psi(s),\alpha_\mu \psi(s)}\|_{\dot H^{\ell-1}}\,ds\\
	&\le \ve_0 + C\sum_{k = 0}^{\ell-1}\int_0^t \|\nabla^k\psi\|_{L^\frac{2(\ell-1)}{k}}\|\nabla^{\ell-1-k}\psi\|_{L^\frac{2(\ell-1)}{\ell-1-k}}\,ds\\
	&\le \ve_0 + C\int_0^t \|\psi\|_{H^{\ell-1}}\|\psi\|_{L^\infty}\,ds\\
	&\le \ve_0 + C\ve_1^2\int_0^t\bra{s}^{-2 -\beta- \de}\,ds\\
	&\le \ve_0 + C\ve_1^2
	\end{aligned}
\end{align}
and if $\ell = 0$, then by charge conservation we have
\begin{align}\label{eq:l2-maxwell-2}
	\begin{aligned}
	\|A_{\mu, \ka}(t)\|_{L^2} &\le \|a_{ \mu, \ka}\|_{L^2} + C\int_0^t \|\bra{\psi(s),\alpha_\mu \psi(s)}\|_{L^\frac43}\,ds\\
	&\le \ve_0 + C\int_0^t \|\psi(s)\|_{L^2}^\frac32\|\psi(s)\|_{L^\infty}^\frac12\,ds\\
	&\le \ve_0 + C\ve_1^2\int_0^t \bra{s}^{-1+\frac\beta2+\frac\de2}  \,ds\\
	&\le \ve_0 + C \bra{t}^{\frac\beta2 +\frac\de2}\ve_1^2 .
\end{aligned}
\end{align}
From \eqref{eq:l2-maxwell-1} and \eqref{eq:l2-maxwell-2}, we see that \eqref{eq:high-m}.

\subsection{Scattering results for Maxwell-Dirac system} In this section, we are concerned with the scattering phenomena of Dirac spinor $\psi$ and potential $A_\mu$. Since we obtained decay properties for Dirac and Maxwell parts, respectively, the proof of scattering is quite straightforward. The readers familiar this argument can skip this section. For $t_1 \le t_2 \in [0,T]$, we have
\begin{align*}
\normo{f_\theta(t_2) -f_\theta(t_1)}_{L^2} \les \left\| \int_{t_1}^{t_2} e^{\theta is \bra{D}} A_\mu (s) \al^\mu \psi(s) ds	\right\|_{L^2} \les \int_{t_1}^{t_2} \normo{A_\mu (s) \al^\mu \psi(s)}_{L^2}  ds \les \bra{t_1}^{-\frac12}\ve_1^2.
\end{align*}
We define $\phi_\theta^\infty =  \lim_{t\to \infty} f_\theta(t)$, where the limit is taken in $L^2$. Then we obtain
\begin{align}\label{eq:scattering-dirac}
	\normo{\psi_\theta(t) - e^{-\theta it\bra{D}}\phi_\theta^\infty}_{L^2} \xrightarrow{t \to \infty} 0.
\end{align}
By putting $\psi^\infty(t) = e^{-it\bra{D}}\phi_+^\infty + e^{it\bra{D}}\phi_-^\infty$, this proves the scattering phenomenon for Dirac spinor.

Analogously,  we see that, for $t_1 \le t_2 \in [0,T]$,
\begin{align*}
	\normo{F_{\mu,\theta}(t_2) -F_{\mu,\theta}(t_1)}_{\doth^1} \les \bra{t_1}^{-1+\beta+\de} \ve_1^2,
\end{align*}
which implies that
\begin{align}\label{eq:scattering-maxwell}
	\normo{A_{\mu,\theta}(t) - e^{\theta it|D|}B_{\mu,\theta}^\infty}_{\doth^1} \xrightarrow{t \to \infty} 0,
\end{align}
by setting $B_{\mu,\theta}^\infty := \lim_{t \to \infty} F_{\mu,\theta}(t)$ in $\doth^1$. Defining $A_{\mu}^\infty(t):= e^{it|D|}B_{\mu,\theta}^\infty + e^{-it|D|}B_{\mu,\theta}^\infty$, we estimate
\begin{align*}
	\normo{A_{\mu}(t) - A_{\mu}^\infty(t)}_{\doth^1} + \normo{\p_t A_{\mu}(t) -\p_t A_{\mu}^\infty(t)}_{L^2} \xrightarrow{t \to \infty} 0.
\end{align*}

\begin{rem}\label{rem:scattering}
	By similar estimates in Section \ref{sec:high}, we readily obtain  \eqref{eq:scattering-dirac} and \eqref{eq:scattering-maxwell} in $H^n$ and $\doth^n$, instead of $L^2$ and $\doth^1$, respectively. These deduce that scattering results for \eqref{md} can be obtained in $H^n \times \doth^n \times \doth^{n-1}$.
\end{rem}

\section{First order weighted energy estimates for spinor: Proof of \eqref{eq:1st-moment-d}}\label{sec:1st-dirac}

In order to estimate $xf_\theta$ in $L^2$, we use Plancherel's theorem as follows:
\[
\normo{x f_\theta(t)}_{L^2} = \normo{ \nabla_\xi \wh{f_\theta}(t,\xi)}_{L_\xi^{2}}.
\]
This implies that, for $\thez \in \{+,-\}$,
\begin{align}
	\nabla_{\xi} \wh{f_\thez}(t,\xi) = & \sum_{\theo,\thet \in \{\pm\}}\int_0^t  \int_{\R^4} e^{isp_\Theta(\xi,\eta)}  \Pi_{\thez}(\xi) \wh{F_{\mu,\thet}}(s,\eta) \al^\mu \nabla_\xi\wh{f_{\theo}}(s,\xi-\eta) d\eta ds \label{eq:1st-dirac-1}\\
	&+ \sum_{\theo,\thet \in \{\pm\}}\int_0^t  \int_{\R^4} e^{isp_\Theta(\xi,\eta)}  \nabla_\xi \Pi_{\thez}(\xi) \wh{F_{\mu,\thet}}(s,\eta) \al^\mu \wh{f_{\theo}}(s,\xi-\eta) d\eta ds \label{eq:1st-dirac-2}\\
	&+ \sum_{\theo,\thet \in \{\pm\}} \int_0^t s \int_{\R^4} e^{isp_\Theta(\xi,\eta)} \nabla_\xi p_\Theta(\xi,\eta) \Pi_{\thez}(\xi) \wh{F_{\mu,\thet}}(s,\eta) \al^\mu \wh{f_{\theo}}(s,\xi-\eta) d\eta ds,\label{eq:1st-dirac-3}
\end{align}
where $p_\Theta$ is defined in \eqref{eq:phase-dirac}. For the estimate of \eqref{eq:1st-dirac-1}, by Plancherel's theorem, we use  H\"older inequality and  \eqref{eq:timedecay-m} to obtain that
\begin{align*}
	\|\eqref{eq:1st-dirac-1}\|_{L_\xi^2} &\les \int_0^t \normo{A_{\mu,\thet}(s)e^{-\theo it\bra{D}}xf_\theo(s) }_{L^2}  ds \\
	&\les \int_0^t \|A_{\mu,\thet}(s)\|_{L^{\infty}}\left\|e^{-\theo it\bra{D}}xf_\theo(s)\right\|_{L^2}  ds \les \int_0^t \bra{s}^{-\frac32}\ve_1^2 ds \les  \ve_1^2.
\end{align*}

In view of Lemma \ref{lem:projection-deri}, the derivative on $ \Pi_{\thez}(\xi)$ gives a decay effect in terms of space.  Thus the estimate of \eqref{eq:1st-dirac-2} can turn out more simply than that of \eqref{eq:1st-dirac-1}. 

 Let us move on to \eqref{eq:1st-dirac-3}. Since we need to recover the time growth $s$ in the integrand, this case becomes the most delicate among the contributions of  \eqref{eq:1st-dirac-1}--\eqref{eq:1st-dirac-3}. To obtain the desired bound of \eqref{eq:1st-dirac-2}, we use the space-time resonances which we observed in Section \ref{sec:resonance}. In view of the time resonance \eqref{eq:resonance-time}, we estimate decomposing the case into the time resonance case $\thez = \theo$ and time non-resonance case $\thez \neq \theo$.
\begin{rem}
To estimate \eqref{eq:1st-dirac-3}, we use the space-time resonant approach. Without exploiting the resonance or non-resonance, we directly estimate \eqref{eq:1st-dirac-3} by $L^2 \times L^\infty$ estimates as follows: 
\begin{align*}
	\normo{\eqref{eq:1st-dirac-3}}_{L_\xi^2} \les \int_0^t s \|A_{\mu,\thet}(s)\|_{L^2} \normo{\psi_\theo (s)}_{L^{\infty}} ds \les \int_0^t \bra{s}^{-1+\frac{3\beta}2 +\frac{3\de}2} \ve_1^2 ds \les \bra{t}^{\frac{3\beta}2 +\frac{3\de}2}\ve_1^2.
\end{align*}
In the first inequality, we used Lemma \ref{lem:coif-mey} with $\normo{\nabla_{\xi} p_\Theta}_{\cm} \les 1$ in both cases $\thez=\theo$ and $\thez\neq \theo$. Given a priori assumption \eqref{eq:assumption-apriori}, any time growth has to be not permitted in the estimate of \eqref{eq:1st-dirac-3}. In order to overcome this divergence, we need to get extra time decay by dividing the time resonance and non-resonance. 
\end{rem}

\emph{Time resonance case: $\thez=\theo$.} If $\thez=\theo$, $\nabla_\xi p_\Theta(\xi,\eta)$ possesses a null structure which gets rid of the space-time resonance set. Therefore, we crucially use this null structure to obtain an extra time decay without any singularity. Indeed, using the integration by parts in time with \eqref{eq:resonance-time} and the relation
\begin{align*}
	e^{isp_\Theta(\xi,\eta)} = -i \frac{\p_s e^{isp_\Theta(\xi,\eta)}}{p_\Theta(\xi,\eta)},
\end{align*}
\eqref{eq:1st-dirac-3} can be bounded by the following terms: For all $\theo,\thet \in \{+,-\}$,
	\begin{align}
		&t \int_{\R^4} e^{itp_\Theta(\xi,\eta)} \frac{\nabla_\xi p_\Theta(\xi,\eta)}{p_\Theta(\xi,\eta)} \Pi_{\thez}(\xi) \wh{F_{\mu,\thet}}(t,\eta) \al^\mu \wh{f_{\theo}}(t,\xi-\eta) d\eta ds, \label{eq:1st-dirac-3-1}\\
		&\int_0^t  \int_{\R^4} e^{isp_\Theta(\xi,\eta)} \frac{\nabla_\xi p_\Theta(\xi,\eta)}{p_\Theta(\xi,\eta)} \Pi_{\thez}(\xi) \wh{F_{\mu,\thet}}(s,\eta) \al^\mu \wh{f_{\theo}}(s,\xi-\eta) d\eta ds, \label{eq:1st-dirac-3-2}\\
		&\int_0^t s \int_{\R^4} e^{isp_\Theta(\xi,\eta)} \frac{\nabla_\xi p_\Theta(\xi,\eta)}{p_\Theta(\xi,\eta)} \Pi_{\thez}(\xi) \p_s\wh{F_{\mu,\thet}}(s,\eta) \al^\mu \wh{f_{\theo}}(s,\xi-\eta) d\eta ds,\label{eq:1st-dirac-3-3}\\
		&\int_0^t s \int_{\R^4} e^{isp_\Theta(\xi,\eta)} \frac{\nabla_\xi p_\Theta(\xi,\eta)}{p_\Theta(\xi,\eta)} \Pi_{\thez}(\xi) \wh{F_{\mu,\thet}}(s,\eta) \al^\mu \p_s \wh{f_{\theo}}(s,\xi-\eta) d\eta ds.\label{eq:1st-dirac-3-4}
	\end{align}
Note that simple calculation leads us that
\begin{align}\label{eq:1st-dirac-multi}
	\normo{\frac{\nabla_\xi p_\Theta}{\left(\bra{\xi}+ \bra{\eta}\right)^{20}p_\Theta}  }_\cm \les 1.
\end{align}
\begin{rem}
As described in Section \ref{sec:resonance}, we need to handle the $|\eta|$-singularity in \eqref{eq:1st-dirac-3-1}--\eqref{eq:1st-dirac-3-4} given by the time resonance of $p_\Theta$. Since
\begin{align*}
	|\nabla_\xi p_\Theta| = \left|\frac{\xi}{\bra{\xi}} - \frac{\xi-\eta}{\xi-\eta} \right| \les \frac{|\eta|}{\bra{\xi}+\bra{\xi-\eta}},
\end{align*}	
we can obtain that \eqref{eq:1st-dirac-multi}. See Remark \ref{rem:coif-meyer} for the calculation of \eqref{eq:1st-dirac-multi}.
\end{rem}
 By Lemma \ref{lem:coif-mey} with \eqref{eq:1st-dirac-multi}, we estimate
\begin{align*}
	\left\|\eqref{eq:1st-dirac-3-1}\right\|_{L_\xi^2} \les t \|A_{\mu,\thet}(t)\|_{H^{n}} \|\psi_{\theo}(t)\|_{W^{20,\infty}} \les \bra{t}^{-1+\frac{3\beta}2 + \frac{3\de}2}\ve_1^2 \les \ve_1^2.
\end{align*}
We may obtain the estimate of \eqref{eq:1st-dirac-3-2} similarly. Using Lemma \ref{lem:time-derivative} and Lemma \ref{lem:coif-mey} with \eqref{eq:1st-dirac-multi}, we see that 
\begin{align*}
	&\left\|\eqref{eq:1st-dirac-3-3}\right\|_{L_\xi^2} + \left\|\eqref{eq:1st-dirac-3-4}\right\|_{L_\xi^2}\\ &\les \int_0^t s \left(\normo{e^{-\thet is |D|} \p_s F_{\mu,\thet}(s)}_{H^{20}}\normo{\psi_\theo(s)}_{W^{20,\infty}} + \normo{F_{\mu,\thet}(s)}_{W^{20,\infty}}\normo{e^{-\theo is\bra{D}}\p_s f_\theo(s)}_{H^{20}} \right)\,ds\\
	&\les \int_0^t s \left(\bra{s}^{-3 +\frac{3\beta}2 + \frac{3\de}2} + \bra{s}^{-\frac72 +\frac{3\beta}2 + \frac{3\de}2 } \right)  \ve_1^2 ds \les \ve_1^2,
\end{align*} 
where in the last inequality we also have used  $\beta +\de \le \frac23$.

\emph{Time non-resonance case: $\thez \neq \theo$.} We now treat the time non-resonance $\thez \neq \theo$. Contrast to the resonance case, $\nabla_\xi p_\Theta(\xi,\eta)$ has no null effect in this case. As we observed in \eqref{eq:nonresonance-time}, since this case does not exhibit the time resonance, we can estimate by using the normal form transform without any singularity. For these reasons, we have the same multiplier bound to \eqref{eq:1st-dirac-multi}. Therefore, the proof of estimates for \eqref{eq:1st-dirac-3} can be finished similarly to the proof of time resonance case $\thez = \theo$.

\section{Second-order weighted energy estimates for spinor: Proof of \eqref{eq:2nd-moment-d}}\label{sec:2nd-dirac}
In this section, we devote to prove \eqref{eq:2nd-moment-d}. We begin with 
\[
\normo{x^2 f_\theta(t)}_{L^2} = \normo{ \nabla_\xi^2 \wh{f_\theta}(t,\xi)}_{L_\xi^{2}},
\]
by Plancherel's theorem. In view of the Duhamel's formula for $f_
\theta$ \eqref{eq:duhamel-dirac}, the derivatives $\nabla_\xi^2$ can fall on $e^{isp_\Theta(\xi,\eta)}$, $\Pi_\thez(\xi)$, and $\wh{f_\theo}(\xi)$ and we need to handle each terms case by case. As we estimated in Section \ref{sec:1st-dirac}, the most complicated term appears when all derivatives fall on the phase function $e^{isp_\Theta(\xi,\eta)}$, since we have to recover the time growth $s^2$. The estimates involved in proving the other cases are straightforward, thus we brief the proof for other cases.


\emph{Derivatives fall only on $\Pi_\thez(\xi)$ or $\wh{f_\theo}(\xi)$.} In this case, the derivatives do not fall on the phase function and the derivative on the projection even implies space decay by Lemma \ref{lem:projection-deri}. Thus we may obtain the desired bound directly by using $L^\infty \times L^2$ estimates with Lemma \ref{lem:coif-mey}.

\emph{Only one derivative falls on the phase function.} The derivative falling on the phase function implies the time growth $s$, we need to recover this growth. However, since the estimates for these cases coincide with proofs for first-order weighted energy estimates, we can finish the proof of these estimates similarly to those in Section \ref{sec:1st-dirac}.

\emph{All derivatives fall on the phase function.} As we mentioned, this case becomes our main case in our analysis. By Duhamel's formula \eqref{eq:duhamel-dirac}, we consider the following term: for $\thez,\theo,\thet \in \{+,-\}$, 
\begin{align}\label{eq:2nd-dirac-main}
	\int_0^t s^2 \int_{\R^4} e^{isp_\Theta(\xi,\eta)} [\nabla_\xi p_\Theta(\xi,\eta)]^2 \Pi_{\thez}(\xi) \wh{F_{\mu,\thet}}(s,\eta) \al^\mu \wh{f_{\theo}}(s,\xi-\eta) d\eta ds.
\end{align}

\subsection{Time resonance case: $\thez=\theo$} Since $\nabla_\xi p_\Theta(\xi,\eta)$ has a null structure when $\thez =\theo$, we exploit these null structures to obtain an extra time decay with the normal form approach. Integrating by parts in time, we see that \eqref{eq:2nd-dirac-main} can be bounded by the following terms:
	\begin{align}
		&t^2 \int_{\R^4} e^{itp_\Theta(\xi,\eta)} \frac{[\nabla_\xi p_\Theta(\xi,\eta)]^2}{p_\Theta(\xi,\eta)} \Pi_{\thez}(\xi) \wh{F_{\mu,\thet}}(t,\eta) \al^\mu \wh{f_{\theo}}(t,\xi-\eta) d\eta, \label{eq:2nd-dirac-1}\\
		&\int_0^t s \int_{\R^4} e^{isp_\Theta(\xi,\eta)} \frac{[\nabla_\xi p_\Theta(\xi,\eta)]^2}{p_\Theta(\xi,\eta)} \Pi_{\thez}(\xi) \wh{F_{\mu,\thet}}(s,\eta) \al^\mu \wh{f_{\theo}}(s,\xi-\eta) d\eta ds, \label{eq:2nd-dirac-2}\\
		&\int_0^t s^2 \int_{\R^4} e^{isp_\Theta(\xi,\eta)} \frac{[\nabla_\xi p_\Theta(\xi,\eta)]^2}{p_\Theta(\xi,\eta)} \Pi_{\thez}(\xi) \p_s\wh{F_{\mu,\thet}}(s,\eta) \al^\mu \wh{f_{\theo}}(s,\xi-\eta) d\eta ds, \label{eq:2nd-dirac-3}\\
		&\int_0^t s^2 \int_{\R^4} e^{isp_\Theta(\xi,\eta)} \frac{[\nabla_\xi p_\Theta(\xi,\eta)]^2}{p_\Theta(\xi,\eta)} \Pi_{\thez}(\xi) \wh{F_{\mu,\thet}}(s,\eta) \al^\mu \p_s\wh{f_{\theo}}(s,\xi-\eta) d\eta ds. \label{eq:2nd-dirac-4}
	\end{align}
Thanks to the null structure, the normal form transform gives an extra time decay without any singularity in \eqref{eq:2nd-dirac-1}--\eqref{eq:2nd-dirac-4}. Nevertheless, since the lack of decay property of spinor \eqref{eq:timedecay-d} still implies a $\bra{t}^{\beta+\de}$ divergence in terms of a priori assumption \eqref{eq:assumption-apriori}, we get an extra decay expect for estimates for \eqref{eq:2nd-dirac-1}--\eqref{eq:2nd-dirac-4}. 

\emph{1) Estimate for \eqref{eq:2nd-dirac-1}.} To bound \eqref{eq:2nd-dirac-1}, we decompose the frequencies $|\xi|,|\xi-\eta|,|\eta|$ into dyadic pieces $N_0,N_1,N_2 \in 2^\Z$, respectively and we denote  3-tuple $\textbf{N}=(N_0,N_1,N_2) \in 2^{\Z}\times 2^\Z\times 2^{\Z} $. Then one has
\begin{align}\label{eq:2nd-dirac-1-dec}
	&t^2 \int_{\R^4} e^{itp_\Theta(\xi,\eta)} \frac{[\nabla_\xi p_\Theta(\xi,\eta)]^2}{p_\Theta(\xi,\eta)}\rho_\textbf{N}(\xi,\eta) \Pi_{\thez}(\xi) \wh{F_{\mu,\thet,N_2}}(t,\eta) \al^\mu \wh{f_{\theo,N_1}}(t,\xi-\eta) d\eta,
\end{align}
where $\rho_\mathbf{N}(\xi,\eta) = \rho_{N_0}(\xi)\rho_{N_1}(\xi-\eta)\rho_{N_2}(\eta)$.
Let us first consider the high frequency regime $\bra{t}^{\frac3n} \le \bra{\nmax}$ by dividing into two parts: $\nmin \neq N_1$ and $\nmin = N_1$. By H\"older inequality, we estimate
\begin{align}
\begin{aligned}\label{eq:esti-n-min-1}
	&\sum_{\substack{\textbf{N},\bra{t}^{\frac3n}\le \bra{\nmax} \\\nmin \neq N_1}}\normo{ \eqref{eq:2nd-dirac-1-dec}}_{L_\xi^2} \\
	&\les \sum_{\substack{\textbf{N},\bra{t}^{\frac3n}\le \bra{\nmax} \\\nmin \neq N_1}} t^2  \normo{\frac{[\nabla_\xi p_\Theta(\xi,\eta)]^2}{p_\Theta(\xi,\eta)}\rho_\mathbf{N}(\xi,\eta)}_{L_{\xi,\eta}^\infty} \|\rho_{N_0}\|_{L_\xi^2} \left\|\wh{F_{\mu,\thet,N_2}}(t)\right\|_{L_\eta^2}\left\|\wh{f_{\theo,N_1}}(t)\right\|_{L_\eta^2}\\
	&\les \sum_{\substack{\textbf{N},\bra{t}^{\frac3n}\le \bra{\nmax} \\\nmin \neq N_1}} t^2  N_2 N_0^2 \bra{N_1}^{-n}\bra{N_2}^{-n} \|A_{\mu,\thet}(t)\|_{H^n}\|\psi_{\theo,N_1}(t)\|_{H^n}\\ 
	&\les \ve_1^2,
	\end{aligned}
\end{align}
where in the last inequality we used the fact that $\nmax \sim \max(N_1,N_2)$ from the frequency support relation and $\bra{N_1}^{-n}\bra{N_2}^{-n} \les \bra{t}^{-3}$. Changing of variables $\eta \mapsto \xi-\eta$ for $\nmin = N_1$, we see that
\begin{align}
	\begin{aligned}\label{eq:esti-n-min-2}
	&\sum_{\substack{\textbf{N},\bra{t}^{\frac3n}\le \bra{\nmax}   \\ \nmin = N_1}}\normo{ \eqref{eq:2nd-dirac-1-dec}}_{L_\xi^2} \\
	&\les\sum_{\substack{\textbf{N},\bra{t}^{\frac3n}\le \bra{\nmax}   \\ \nmin = N_1}} t^2  \normo{\frac{[\nabla_\xi p_\Theta(\xi,\xi-\eta)]^2}{p_\Theta(\xi,\xi-\eta)} \rho_{\mathbf N}}_{L_{\xi,\eta}^\infty}  \left\|\wh{F_{\mu,\thet,N_2}}(t)\right\|_{L_\xi^2} \|\rho_{N_1}\|_{L_\eta^2}\left\|\wh{f_{\theo,N_1}}(t)\right\|_{L_\eta^2}\\
	&\les \sum_{\substack{\textbf{N},\bra{t}^{\frac3n}\le \bra{\nmax}   \\ \nmin = N_1}} t^2  N_2 N_1^2 \bra{N_1}^{-n}\bra{N_2}^{-n} \|A_{\mu,\thet}(t)\|_{H^n}\|\psi_{\theo,N_1}(t)\|_{H^n}\\ 
	&\les \ve_1^2.
	\end{aligned}
\end{align}
Therefore, we will henceforth assume that $\bra{\nmax} \le \bra{t}^{\frac3n}$ in the estimates for \eqref{eq:2nd-dirac-1-dec}. To estimate \eqref{eq:2nd-dirac-1-dec}, the absence of time integral leads to a constraint to further use the normal form approach. We use the non-resonance  \eqref{eq:nonresonance-space} concerning frequency to overcome this absence of time integral, since \eqref{eq:2nd-dirac-1-dec} does not exhibit the space resonance regardless of the sign relations. Using the integration by parts in $\eta$ with relation
\begin{align*}
	e^{itp_\Theta(\xi,\eta)} = -i\frac{\nabla_\eta p_\Theta(\xi,\eta) \cdot  \nabla_\eta e^{itp_\Theta(\xi,\eta)}}{\left| \nabla_\eta p_\Theta(\xi,\eta)\right|^2},
\end{align*}
 one gets the following contributions: 
	\begin{align}
		& t \int_{\R^4} e^{itp_\Theta(\xi,\eta)} \nabla_\eta\mathbf{m_N(\xi,\eta)} \wh{F_{\mu,\thet,N_2}}(t,\eta) \al^\mu \wh{f_{\theo,N_1}}(t,\xi-\eta) d\eta, \label{eq:2nd-dirac-1-a}\\
		&t \int_{\R^4} e^{itp_\Theta(\xi,\eta)} \mathbf{m_N(\xi,\eta)} \wh{P_{N_2}(xF_{\mu,\thet})}(t,\eta) \al^\mu \wh{f_{\theo,N_1}}(t,\xi-\eta) d\eta, \label{eq:2nd-dirac-1-b}\\
		& t \int_{\R^4} e^{itp_\Theta(\xi,\eta)} \mathbf{m_N(\xi,\eta)} \wh{F_{\mu,\thet,N_2}}(t,\eta) \al^\mu \wh{P_{N_1}(xf_{\theo})}(t,\xi-\eta) d\eta, \label{eq:2nd-dirac-1-c}
	\end{align}
for $\textbf{N} \in 2^{3\Z}$, where the multiplier
\begin{align*}
 \mathbf{m_N(\xi,\eta)}:=	\frac{\nabla_\eta p_\Theta(\xi,\eta)[\nabla_\xi p_\Theta(\xi,\eta)]^2}{p_\Theta(\xi,\eta)|\nabla_\eta p_\Theta(\xi,\eta)|^2} \Pi_{\thez}(\xi)\rho_\textbf{N}(\xi,\eta).
\end{align*}
By simple calculation, we have 
\begin{align}\label{eq:multi-2nd-dirac-1}
	\normo{\frac{\mathbf{m_N}}{|\eta|(\bra{\xi-\eta} + \bra{\eta})^{20}}}_{\cm} \les 1 \;\;\mbox{ and }\;\; \normo{\frac{\nabla_\eta\mathbf{m_N}}{(\bra{\xi-\eta} + \bra{\eta})^{20}}}_{\cm} \les 1.
\end{align}
If $N_{012}^{\min} \le \bra{t}^{-1}$, in a  similar way to \eqref{eq:esti-n-min-1} and \eqref{eq:esti-n-min-2}, H\"older inequality yields that
\begin{align*}	\sum_{\substack{\textbf{N},\bra{\nmax} \le \bra{t}^{\frac3n}\\N_{012}^{\min} \le \bra{t}^{-1}}}\left\|\eqref{eq:2nd-dirac-1-a}\right\|_{L_\xi^2} \les \sum_{\substack{\textbf{N},\bra{\nmax} \le \bra{t}^{\frac3n}\\N_{012}^{\min} \le \bra{t}^{-1}}}t (N_{012}^{\min})^2 \bra{N_1}^{-n}\bra{N_2}^{-n} \normo{A_{\mu,\thet}(t)}_{H^n} \normo{\psi_\theo(t)}_{H^n} \les \ve_1^2,
\end{align*}
where we used the pointwise bound of $\nabla_\eta \mathbf{m_N}$. On the other hand, if $\bra{t}^{-1} \le N_{012}^{\min}  $, Lemma \ref{lem:coif-mey} with \eqref{eq:multi-2nd-dirac-1} implies that
\begin{align*}
	\sum_{\substack{\textbf{N},\bra{\nmax} \le \bra{t}^{\frac3n},\\ \bra{t}^{-1} \le N_{012}^{\min} }}\left\|\eqref{eq:2nd-dirac-1-a}\right\|_{L_\xi^2} \les t^{1+\zeta} \normo{A_{\mu,\thet}(t)}_{H^n}\normo{\psi_{\theo}(t)}_{W^{20,\infty}} \les \bra{t}^{-1+\zeta+\frac{3\beta}2+\frac{3\de}2}\ve_1^2 \les \ve_1^2,
\end{align*}
for some small $0<\zeta< 1-\frac{3\beta}2-\frac{3\de}2$, since   $\beta+\de < \frac14$. 

Let us move on to the estimates for \eqref{eq:2nd-dirac-1-b} and \eqref{eq:2nd-dirac-1-c}. Estimates on $N_{012}^{\min} \le \bra{t}^{-1}$ can be treated similarly to that of \eqref{eq:2nd-dirac-1-a}. Then Lemma \ref{lem:coif-mey}, Propositions \ref{prop:timedecay}, and \ref{prop:timedecay-maxwell} also give the bound
\begin{align*}
		\sum_{\substack{\textbf{N},\bra{\nmax} \le \bra{t}^{\frac3n},\\ \bra{t}^{-1} \le N_{012}^{\min} }}\left\|\eqref{eq:2nd-dirac-1-b}\right\|_{L_\xi^2} &\les 		\sum_{\substack{\textbf{N},\bra{\nmax} \le \bra{t}^{\frac3n},\\ \bra{t}^{-1} \le N_{012}^{\min} }}t \||D|xF_{\mu,\thet}(t)\|_{H^{20}}	 \normo{\psi_{\theo}(t)}_{W^{20,\infty}} \\
		&\les \bra t^{-1+\frac{3\beta}2+\frac{3\de}2 +\zeta}\ve_1^2
\end{align*}
and
\begin{align*}
		\sum_{\substack{\textbf{N},\bra{\nmax} \le \bra{t}^{\frac3n},\\ \bra{t}^{-1} \le N_{012}^{\min} }}	\left\|\eqref{eq:2nd-dirac-1-c}\right\|_{L_\xi^2}\les		\sum_{\substack{\textbf{N},\bra{\nmax} \le \bra{t}^{\frac3n},\\ \bra{t}^{-1} \le N_{012}^{\min} }} t \|A_{\mu,\thet}(t)\|_{W^{20,\infty}}	 \normo{xf_{\theo}(t)}_{H^{20}} \les \bra{t}^{-\frac12 +\de +\zeta}\ve_1^2.
\end{align*}
  In these estimates, $\zeta$ comes from the high frequency sum $\bra{\nmax} \le \bra{t}^\frac3n$ and satisfies that $0<\zeta<\min(1-\frac{3\beta}2-\frac{3\de}2,\frac12 -\de)$.

\emph{2) Estimate for \eqref{eq:2nd-dirac-2}.} For this case, we further perform the normal form transform. Then we have
	\begin{align}
			& t \int_{\R^4} e^{itp_\Theta(\xi,\eta)} \frac{[\nabla_\xi p_\Theta(\xi,\eta)]^2}{[p_\Theta(\xi,\eta)]^2} \Pi_{\thez}(\xi) \wh{F_{\mu,\thet}}(t,\eta) \al^\mu \wh{f_{\theo}}(t,\xi-\eta) d\eta ds, \label{eq:2nd-dirac-2-a}\\
			&\int_0^t  \int_{\R^4} e^{isp_\Theta(\xi,\eta)} \frac{[\nabla_\xi p_\Theta(\xi,\eta)]^2}{[p_\Theta(\xi,\eta)]^2} \Pi_{\thez}(\xi) \wh{F_{\mu,\thet}}(s,\eta) \al^\mu \wh{f_{\theo}}(s,\xi-\eta) d\eta ds, \label{eq:2nd-dirac-2-b}\\
			&\int_0^t s \int_{\R^4} e^{isp_\Theta(\xi,\eta)} \frac{[\nabla_\xi p_\Theta(\xi,\eta)]^2}{[p_\Theta(\xi,\eta)]^2} \Pi_{\thez}(\xi) \p_s \wh{F_{\mu,\thet}}(s,\eta) \al^\mu \wh{f_{\theo}}(s,\xi-\eta) d\eta ds, \label{eq:2nd-dirac-2-c}\\
			&\int_0^t s \int_{\R^4} e^{isp_\Theta(\xi,\eta)} \frac{[\nabla_\xi p_\Theta(\xi,\eta)]^2}{[p_\Theta(\xi,\eta)]^2} \Pi_{\thez}(\xi) \wh{F_{\mu,\thet}}(s,\eta) \al^\mu \p_s \wh{f_{\theo}}(s,\xi-\eta) d\eta ds, \label{eq:2nd-dirac-2-d}
	\end{align}
with the multiplier bound
\begin{align}\label{eq:multi-2nd-dirac-2}
\normo{	 \frac{[\nabla_\xi p_\Theta(\xi,\eta)]^2}{[p_\Theta(\xi,\eta)]^2 \left(\bra{\xi-\eta}+\bra{\eta}\right)^{20}}}_{\cm} \les 1.
\end{align}
Using Lemma \ref{lem:coif-mey} with \eqref{eq:multi-2nd-dirac-2}, we see that
\begin{align*}
	\left\|\eqref{eq:2nd-dirac-2-a}\right\|_{L_\xi^2} &\les t \|A_{\mu,\thet}(t)\|_{H^{n}}	 \normo{\psi_{\theo}(t)}_{W^{20,\infty}} \les  \ve_1^2,\\
	\left\|\eqref{eq:2nd-dirac-2-b}\right\|_{L_\xi^2} &\les \int_0^t \|A_{\mu,\thet}(s)\|_{H^{n}}	 \normo{\psi_{\theo}(s)}_{W^{20,\infty}} ds \les \ve_1^2.
\end{align*}
For estimates for \eqref{eq:2nd-dirac-2-c} and \eqref{eq:2nd-dirac-2-d}, Lemma \ref{lem:time-derivative} leads us that
\begin{align*}
	\left\|\eqref{eq:2nd-dirac-2-c}\right\|_{L_\xi^2} &\les \int_0^t \|\p_sF_{\mu,\thet}(s)\|_{H^{20}}	 \normo{\psi_{\theo}(s)}_{W^{20,\infty}} ds \les \int_0^t \bra{s}^{-3+\frac{3\beta}2+\frac{3\de}2}\ve_1^2 ds \les \ve_1^2,\\
	\left\|\eqref{eq:2nd-dirac-2-d}\right\|_{L_\xi^2} &\les \int_0^t \|A_{\mu,\thet}(s)\|_{W^{20,\infty}}	 \normo{\p_s f_{\theo}(s)}_{H^{20}} ds \les \int_0^t \bra{s}^{-\frac72+\beta+\de}\ve_1^2 ds \les \ve_1^2.
\end{align*}

\emph{3) Estimate for \eqref{eq:2nd-dirac-3} and \eqref{eq:2nd-dirac-4}.} As observed in the proof of Lemma \ref{lem:time-derivative}, one gets
\begin{align*}
	\p_s F_{\mu,\thet}(s) = -\thet \frac i2 \sum_{\kao,\kat \in \{\pm\}}  e^{-\thet it|D|}|D|^{-1}\bra{\psi_{\ka_1}(s),\alpha_\mu\psi_{\ka_2}(s)}.
\end{align*}
By H\"older inequality, we estimate
\begin{align*}
	\normo{|D|\p_s F_{\mu,\thet}(s)}_{H^{20}} \les \sum_{\kao,\kat \in \{\pm\}} \|\psi_{\kao}(s)\|_{H^{20}} \|\psi_{\kat}(s)\|_{W^{20,\infty}} \les \bra{s}^{-2+\beta+\de}\ve_1^2.
\end{align*}
Since
\begin{align*}
	\normo{	 \frac{[\nabla_\xi p_\Theta]^2}{p_\Theta|\eta| \left(\bra{\xi-\eta} + \bra{\eta}\right)^{20} } }_{\cm} \les 1,
\end{align*}
Lemma \ref{lem:coif-mey} leads us that
\begin{align*}
	\normo{\eqref{eq:2nd-dirac-3}}_{L_\xi^2} \les \int_0^t s^2 \||D|\p_s F_{\mu,\thet}(s)\|_{H^{20}} \|\psi_\theo(s)\|_{W^{20,\infty}} ds \les \int_0^t \bra{s}^{-2+2\beta+2\de} \ve_1^3 ds \les \ve_1^2,
\end{align*}
since $\ve_1<1$. To estimate \eqref{eq:2nd-dirac-4}, we obtain the multiplier estimates
\begin{align*}
	\normo{\frac{[\nabla_\xi p_\Theta]^2}{p_\Theta (\bra{\xi-\eta}+\bra{\eta})^{20}}}_{\cm} \les 1.
\end{align*}
By Proposition \ref{prop:timedecay-maxwell} and Lemma \ref{lem:time-derivative}, we estimate
\begin{align*}
	\normo{\eqref{eq:2nd-dirac-4}}_{L_\xi^2} \les \int_0^t s^2\normo{A_{\mu,\thet}(s)}_{W^{20,\infty}} \normo{e^{-\theo it\bra{D}}\p_s f_\theo(s)}_{H^{20}}  ds \les \int_0^t \bra{s}^{-\frac32 +\beta+\de} \ve_1^3 ds \les \ve_1^2,
\end{align*}

\subsection{Time non-resonance case: $\thez\neq\theo$} 
In contrast to the previous section, the multiplier $\nabla_\xi p_\Theta$ no longer possesses a null structure. Fortunately, as  examined in \eqref{eq:nonresonance-time}, we can use the normal form approach without any singularity by exploiting the time non-resonance of phase $p_\Theta$ when $\thez \neq \theo$. Indeed, by the integration by parts in time, \eqref{eq:2nd-dirac-main} can be bounded by the following terms:
\begin{align}	
	& t^2 \int_{\R^4} e^{itp_\Theta(\xi,\eta)} \textbf{M}(\xi,\eta) \wh{F_{\mu,\thet}}(t,\eta) \al^\mu \wh{f_{\theo}}(t,\xi-\eta) d\eta \label{eq:2-dirac-non-a}\\
	&  \int_0^t s \int_{\R^4} e^{isp_\Theta(\xi,\eta)} \textbf{M}(\xi,\eta) \wh{F_{\mu,\thet}}(s,\eta) \al^\mu \wh{f_{\theo}}(s,\xi-\eta) d\eta ds, \label{eq:2-dirac-non-b}\\
		&  \int_0^t s^2 \int_{\R^4} e^{isp_\Theta(\xi,\eta)} \textbf{M}(\xi,\eta) \partial_s\left(\wh{F_{\mu,\thet}}(s,\eta) \al^\mu \wh{f_{\theo}}(s,\xi-\eta) \right) d\eta ds ,\label{eq:2-dirac-non-c}
\end{align}
where the multiplier 
$$
\textbf{M}(\xi,\eta) := \frac{[\nabla_\xi p_\Theta(\xi,\eta)]^2 \Pi_{\thez}(\xi)}{p_\Theta(\xi,\eta)}.
$$
In particular, we note that $\normo{(\bra{\xi}+\bra{\eta})^{-20}\textbf{M}}_\cm \les 1$. We can further perform the normal form transform with the time non-resonance except for \eqref{eq:2-dirac-non-a}. This implies that the estimate of \eqref{eq:2-dirac-non-a} becomes the most complicated case in those of \eqref{eq:2-dirac-non-a}--\eqref{eq:2-dirac-non-c}.

We begin with the estimates for \eqref{eq:2-dirac-non-a}. In this case, the absence of time integral leads to a constraint to use the normal form approach. In view of \eqref{eq:nonresonance-space}, since \eqref{eq:2-dirac-non-a} does not exhibit the space resonance regardless of the sign relations, we proceed to bound \eqref{eq:2-dirac-non-a} with this non-resonance. Before that, we decompose $|\xi|,|\xi-\eta|,|\eta|$ in \eqref{eq:2-dirac-non-a} into the dyadic pieces $N_0,N_1,N_2 \in 2^\Z$, respectively. By doing this one gets that, for the 3-tuple $\textbf{N}=(N_0,N_1,N_2) \in 2^{\Z}\times 2^\Z\times 2^\Z $,
\begin{align}\label{eq:2-dirac-non-decom}
	 t^2 \int_{\R^4} e^{itp_\Theta(\xi,\eta)} \mathbf{M_N}(\xi,\eta) \wh{F_{\mu,\thet,N_2}}(t,\eta) \al^\mu \wh{f_{\theo,N_1}}(t,\xi-\eta) d\eta,
\end{align}
where $\mathbf{M_N}(\xi,\eta) := \mathbf{M}(\xi,\eta) \rho_{\textbf{N}}(\xi,\eta)$. Let us consider a low frequency regime. Analogously to estimates for \eqref{eq:2nd-dirac-1-dec}, using H\"older inequality and the change of variables, we can bound
\begin{align*}
	&\sum_{\textbf{N}, \nmax \ge \bra{t}^{\frac2n}}  \normo{\eqref{eq:2-dirac-non-decom}}_{L_\xi^2} \\
	&\hspace{1cm} \les \sum_{\textbf{N}, \nmax \ge \bra{t}^{\frac2n}} t^2\bra{N_0} \left( \|\rho_{N_0}\|_{L_\xi^2}+\|\rho_{N_1}\|_{L_\xi^2}\right) \bra{N_1}^{-n}\bra{N_2}^{-n} \normo{A_{\mu,\thet}(t)}_{H^n} \normo{\psi_\theo(t)}_{H^n}\\
	&\hspace{1cm} \les \bra{t}^{\frac\beta2 + \frac\de2}\ve_1^2 \les \bra{t}^{\de} \ve_1^2,
\end{align*}
since $\beta < \de$. Then we also have
\begin{align*}
	\sum_{\textbf{N}, \nmin \le \bra{t}^{-1}}  \normo{\eqref{eq:2-dirac-non-decom}}_{L_\xi^2} 	&\les \sum_{\textbf{N}, \nmin \le \bra{t}^{-1}} t^2 \bra{N_0}  (\nmin)^2 \bra{N_1}^{-n}\bra{N_2}^{-n} \normo{A_{\mu,\thet}(t)}_{H^n} \normo{\psi_\theo(t)}_{H^n}\\
	& \les \bra{t}^{\frac\beta2 + \frac\de2}\ve_1^2 \les \bra{t}^\de \ve_1^2.
\end{align*}
Therefore we henceforth assume that $\textbf{N} \in \mathcal N$, where 
$$
\mathcal N := \left\{ (N_0,N_1,N_2) : \nmax \le \bra{t}^{\frac2n} \;\mbox{ and }\;\nmin \ge \bra{t}^{-1}\right\}.
$$
As mentioned above, by integrating by parts in frequency, \eqref{eq:2-dirac-non-decom} can be bounded by the following:  
	\begin{align}	
		& t \int_{\R^4} e^{itp_\Theta(\xi,\eta)} \nabla_\eta \left(\frac{\nabla_\eta p_\Theta(\xi,\eta) \mathbf{M_N}(\xi,\eta)}{|\nabla_\eta p_\Theta(\xi,\eta)|^2} \right)\wh{F_{\mu,\thet,N_2}}(t,\eta) \al^\mu \wh{f_{\theo,N_1}}(t,\xi-\eta) d\eta, \label{eq:2-dirac-non-a-a}\\
		& t \int_{\R^4} e^{itp_\Theta(\xi,\eta)}  \frac{\nabla_\eta p_\Theta(\xi,\eta) \mathbf{M_N}(\xi,\eta)}{|\nabla_\eta p_\Theta(\xi,\eta)|^2}  \wh{P_{N_2}(xF_{\mu,\thet})}(t,\eta) \al^\mu \wh{f_{\theo,N_1}}(t,\xi-\eta) d\eta, \label{eq:2-dirac-non-a-b}\\
		& t \int_{\R^4} e^{itp_\Theta(\xi,\eta)}  \frac{\nabla_\eta p_\Theta(\xi,\eta) \mathbf{M_N}(\xi,\eta)}{|\nabla_\eta p_\Theta(\xi,\eta)|^2} \wh{F_{\mu,\thet,N_2}}(t,\eta) \al^\mu \wh{P_{N_1}(xf_{\theo})}(t,\xi-\eta) d\eta, \label{eq:2-dirac-non-a-c}
	\end{align}

To estimate \eqref{eq:2-dirac-non-a-a} and \eqref{eq:2-dirac-non-a-b}, we further take the integration by parts in frequency, obtaining that \eqref{eq:2-dirac-non-a-a} and \eqref{eq:2-dirac-non-a-b} are bounded by the following three terms, respectively:
\begin{align}\label{}
	&  \int_{\R^4} e^{itp_\Theta(\xi,\eta)} \nabla_\eta \mathbf{M_N^1}(\xi,\eta) \wh{F_{\mu,\thet,N_2}}(t,\eta) \al^\mu \wh{f_{\theo,N_1}}(t,\xi-\eta) d\eta, \label{eq:2-dirac-non-a-a-1}\\
	&  \int_{\R^4} e^{itp_\Theta(\xi,\eta)}  \mathbf{M_N^1}(\xi,\eta) \wh{P_{N_2}(xF_{\mu,\thet})}(t,\eta) \al^\mu \wh{f_{\theo,N_1}}(t,\xi-\eta) d\eta, \label{eq:2-dirac-non-a-a-2}\\
	&  \int_{\R^4} e^{itp_\Theta(\xi,\eta)}  \mathbf{M_N^1}(\xi,\eta) \wh{F_{\mu,\thet,N_2}}(t,\eta) \al^\mu \wh{P_{N_1}(xf_{\theo})}(t,\xi-\eta) d\eta, \label{eq:2-dirac-non-a-a-3}
\end{align}
and
\begin{align}
	&  \int_{\R^4} e^{itp_\Theta(\xi,\eta)} \nabla_\eta \mathbf{M_N^2}(\xi,\eta) \wh{P_{N_2}(xF_{\mu,\thet})}(t,\eta) \al^\mu \wh{f_{\theo,N_1}}(t,\xi-\eta) d\eta, \label{eq:2-dirac-non-a-b-1}\\
	&  \int_{\R^4} e^{itp_\Theta(\xi,\eta)}  \mathbf{M_N^2}(\xi,\eta) \wh{P_{N_2}(x^2F_{\mu,\thet})}(t,\eta) \al^\mu \wh{f_{\theo,N_1}}(t,\xi-\eta) d\eta, \label{eq:2-dirac-non-a-b-2}\\
	&  \int_{\R^4} e^{itp_\Theta(\xi,\eta)}  \mathbf{M_N^2}(\xi,\eta) \wh{P_{N_2}(xF_{\mu,\thet})}(t,\eta) \al^\mu \wh{P_{N_1}(xf_{\theo})}(t,\xi-\eta) d\eta, \label{eq:2-dirac-non-a-b-3}
\end{align}
where
\begin{align*}
	\mathbf{M_N^1}(\xi,\eta) := \frac{\nabla_\eta p_\Theta(\xi,\eta)}{|\nabla_\eta p_\Theta(\xi,\eta)|^2}\nabla_\eta \left(\frac{\nabla_\eta p_\Theta(\xi,\eta) \mathbf{m_N}(\xi,\eta)}{|\nabla_\eta p_\Theta(\xi,\eta)|^2}\right) \mbox{ and }   \mathbf{M_N^2}(\xi,\eta) := \frac{(\nabla_\eta p_\Theta(\xi,\eta))^2 \mathbf{m_N}(\xi,\eta)}{|\nabla_\eta p_\Theta(\xi,\eta)|^4}.
\end{align*}
Among the contribution of multipliers in \eqref{eq:2-dirac-non-a-a-1}--\eqref{eq:2-dirac-non-a-a-3}, we have
\begin{align}\label{eq:multi-2nd-dirac-a}
	\left\| \frac{ \mathbf{M_N^1}}{(\bra{\xi-\eta}+\bra{\eta})^{20}}\right\|_{\cm} \les N_2^{-1}  \;\;\mbox{ and } \;\; \left\| \frac{\nabla_\eta \mathbf{M_N^1}}{(\bra{\xi-\eta}+\bra{\eta})^{20}}\right\|_{\cm} \les N_2^{-2}.
\end{align}
Compared to $\nabla_\eta\mathbf{M_N^1}$, the multiplier $\mathbf{M_N^1}$ has a weaker singularity. Thus we handle \eqref{eq:2-dirac-non-a-a-1} carefully. As well as, since the weight of Maxwell part $F_{\mu,\thet}$ in \eqref{eq:2-dirac-non-a-a-2} gives rise to the time growth, the estimate of \eqref{eq:2-dirac-non-a-a-2} is also  complicated term.

By Lemma \ref{lem:coif-mey} with \eqref{eq:multi-2nd-dirac-a} and Bernstein's inequality, we estimate
\begin{align*}
	\sum_{\textbf{N} \in \mathcal N}\normo{\eqref{eq:2-dirac-non-a-a-1}}_{L_\xi^2} &\les \sum_{\textbf{N}\in \mathcal N} N_2^{-2} \bra{\nmax}^{20}\bra{N_1}^{-n} \normo{A_{\mu,\thet,N_2}(t)}_{L^\infty} \|\psi_{\theo}(t)\|_{H^n} \\
	&\les  \sum_{\textbf{N} \in \mathcal N}\bra{\nmax}^{20}\bra{N_1}^{-n} \normo{A_{\mu,\thet,N_2}(t)}_{L^2} \ve_1  \\
	&\les  \sum_{\textbf{N} \in \mathcal N}\bra{\nmax}^{20}\bra{N_1}^{-n}\bra{N_2}^{-n} \bra{t}^{\frac\beta2 +\frac\de2} \ve_1^2 \\
	& \les \bra{t}^{\frac\beta2 +\frac\de2+\zeta}\ve_1^2\les \bra{t}^{\de}\ve_1^2,
\end{align*}
 for any $\zeta$, which satisfies $0 <\zeta < \frac{\de-\beta}{2}$. Similarly, $L^\infty \times L^2$ estimates, Bernstein's inequality,  Sobolev embedding $\doth^1 \hookrightarrow L^4$, and Lemma \ref{lem:1st-weight-high} yield that 
\begin{align*}
	\sum_{\textbf{N} \in \mathcal N}\normo{\eqref{eq:2-dirac-non-a-a-2}}_{L_\xi^2} &\les \sum_{\textbf{N}\in \mathcal N} N_2^{-1} \bra{\nmax}^{20}\bra{N_1}^{-n} \normo{P_{N_2}(e^{\thet it|D|}xF_{\mu,\thet})(t)}_{L^\infty} \|\psi_{\theo}(t)\|_{H^n} \\
	&\les  \sum_{\textbf{N} \in \mathcal N}\bra{\nmax}^{20}\bra{N_1}^{-n}\bra{N_2}^{-20} \normo{|D|xF_{\mu,\thet}}_{H^{20}} \ve_1 \les \bra{t}^{ \frac\beta2+\frac\de2+\zeta}\ve_1^2 \le \bra{t}^\de \ve_1^2,
\end{align*}
 for any  $0 <\zeta < \frac{\de-\beta}{2}$. The estimate of \eqref{eq:2-dirac-non-a-a-3} can be obtained more simply.

Estimates of \eqref{eq:2-dirac-non-a-b-1}--\eqref{eq:2-dirac-non-a-b-3} can be finished by using Bernstein's inequality and $L^\infty \times L^2$ estimates with the multiplier bounds
\begin{align*}
	\left\| \frac{ \mathbf{M_N^2}}{(\bra{\xi-\eta}+\bra{\eta})^{20}}\right\|_{\cm} \les 1  \;\;\mbox{ and } \;\; \left\| \frac{\nabla_\eta \mathbf{M_N^2}}{(\bra{\xi-\eta}+\bra{\eta})^{20}}\right\|_{\cm} \les N_2^{-1}.
\end{align*}

We finish this section with the estimates of \eqref{eq:2-dirac-non-a-c}. we obtain, by $L^\infty \times L^2$ estimates, that
\begin{align*}
	\sum_{\textbf{N}\in \mathcal N}\normo{ \eqref{eq:2-dirac-non-a-c}}_{L_\xi^2} \les \sum_{\textbf{N}\in \mathcal N} t \bra{\nmax}^{20}\bra{N_1}^{-20}\bra{N_2}^{-20} \|A_{\mu,\thet,N_2}(t)\|_{W^{20, \infty}}\|xf_\theo(t)\|_{H^{20}}  \les\ve_1^2,
\end{align*}
for $0 < \zeta  \ll 1$ with the high regularity condition.

It remains to consider \eqref{eq:2-dirac-non-b} and \eqref{eq:2-dirac-non-c}. Since the estimates for \eqref{eq:2-dirac-non-c} can be done similarly to those for \eqref{eq:2-dirac-non-b} with Lemma \ref{lem:time-derivative}, we only treat  \eqref{eq:2-dirac-non-b}. We use once more the integration by parts in time and this implies that \eqref{eq:2-dirac-non-b} is given by the following three terms:
	\begin{align}	
		& t \int_{\R^4} e^{itp_\Theta(\xi,\eta)} \frac{\textbf{m}(\xi,\eta)}{p_\Theta(\xi,\eta)} \wh{F_{\mu,\thet}}(t,\eta) \al^\mu \wh{f_{\theo}}(t,\xi-\eta) d\eta, \label{eq:2-time-d-b-a}\\
		&  \int_0^t  \int_{\R^4} e^{isp_\Theta(\xi,\eta)} \frac{\textbf{m}(\xi,\eta)}{p_\Theta(\xi,\eta)} \wh{F_{\mu,\thet}}(s,\eta) \al^\mu \wh{f_{\theo}}(s,\xi-\eta) d\eta ds,\label{eq:2-time-d-b-b}\\
		&  \int_0^t s \int_{\R^4} e^{isp_\Theta(\xi,\eta)} \frac{\textbf{m}(\xi,\eta)}{p_\Theta(\xi,\eta)} \partial_s \left[\wh{F_{\mu,\thet}}(s,\eta) \al^\mu \wh{f_{\theo}}(s,\xi-\eta) \right] d\eta ds,\label{eq:2-time-d-b-c}		
	\end{align}
Then, using \eqref{eq:timedecay-m} and integration by parts in time with $\normo{\frac{\textbf{m}}{p_\Theta(\bra{\xi}+\bra{\eta})^{20}}}_\cm \les 1$, Propositions \ref{prop:timedecay}, \ref{prop:timedecay-maxwell}, and Lemma \ref{lem:coif-mey} lead us that
\[
\left\| \eqref{eq:2-time-d-b-a} \right\|_{L_\xi^2} \les  t \left( \|A_{\mu,\thet}(t)\|_{H^{n}} \|\psi(t)\|_{W^{20,\infty}} + \|A_{\mu,\thet}(t)\|_{W^{20,\infty}} \|\psi(t)\|_{H^{n}}\right)   \les \ve_1^2.
\]
The estimate \eqref{eq:2-time-d-b-b} is the analogue of \eqref{eq:2-time-d-b-a} with the time integral instead of time growth $t$. Estimates for \eqref{eq:2-time-d-b-c} can be obtained in a straightforward fashion by Lemma \ref{lem:time-derivative}. We omit the details.

\section{First order weighted energy estimates for gauge: Proof of \eqref{eq:1st-moment-m}}

By Plancherel's theorem, one gets that
\[
\normo{x F_{\mu,\ka}(t)}_{\doth^1} = \normo{ |\xi|\nabla_\xi \wh{F_{\mu,\ka}}(\xi)}_{L_\xi^{2}}.
\]
Recalling the notation \eqref{eq:interaction-maxwell}, \eqref{eq:phase-maxwell}, and Duhamel's formula \eqref{eq:duhamel-maxwell}, $|\xi|\nabla_\xi F_{\mu,\kaz}$ can be represented by the following:
\begin{align}
& \sum_{\ka_1,\kat \in \{\pm\}}\int_{0}^{t}\!\!\int_{\R^4}  e^{is q_K(\xi,\eta)} \frac{\xi}{|\xi|^2} \bra{\widehat{f_\kat}(s,\eta),\alpha_\mu\wh{f_\kao}(s,\xi+\eta)}\, d \eta ds \label{eq:1-maxwell-a}\\
	& \sum_{\ka_1,\kat \in \{\pm\}} \int_{0}^{t}\!\!\int_{\R^4}  e^{is q_K(\xi,\eta)}  \bra{\widehat{f_\kat}(s,\eta),\alpha_\mu \nabla_\xi \wh{f_\kao}(s,\xi+\eta)}\, d \eta ds,\label{eq:1-maxwell-b}\\
	& \sum_{\ka_1,\kat \in \{\pm\}}\int_{0}^{t}s\int_{\R^4}  e^{is q_K(\xi,\eta)} \nabla_{\xi} q_K(\xi,\eta) \bra{\widehat{f_\kat}(s,\eta),\alpha_\mu\wh{f_\kao}(s,\xi+\eta)}\, d \eta ds, \label{eq:1-maxwell-c}
\end{align}
where $q_K$ is defined in \eqref{eq:phase-maxwell}.

For the estimates for \eqref{eq:1-maxwell-a} and \eqref{eq:1-maxwell-b}, a priori assumption \eqref{eq:assumption-apriori}, Lemma \ref{lem:hls}, and \eqref{eq:timedecay-d}  deduce that
\begin{align*}
	\|\eqref{eq:1-maxwell-a}\|_{L_\xi^2} \les \int_0^t   \normo{\bra{\psi_{\kat}(s), \al^\mu \psi_{\kao}(s)}}_{L^\frac43} ds \les \int_0^t \bra{s}^{-1+\frac\beta2 +\frac\de2} \ve_1^2 ds \les  \bra{t}^{\frac\beta2 +\frac\de2} \ve_1^2
\end{align*}
and
\begin{align*}
	\|\eqref{eq:1-maxwell-b}\|_{L_\xi^2} \les \int_0^t   \normo{\psi_\kat(s)}_{L^\infty} \normo{  xf_{\kao}(s)}_{L^2} ds \les \int_0^t \bra{s}^{-2+\beta+\de} \ve_1^2 ds \les  \ve_1^2.
\end{align*}

Let us consider \eqref{eq:1-maxwell-c}. In this case, the multiplier $\nabla_\xi q_K$ does not give any null effect even though we consider all sign relations. The direct $L^2 \times L^\infty$ estimate gives rise to the $\bra{t}^{\beta+\de}$ growth, this estimate is failure given the a priori assumption \eqref{eq:assumption-apriori}. To handle this divergence, we use the time resonance of $q_K$ in \eqref{eq:resonance-time-maxwell}. Integrating by parts in time, we see that \eqref{eq:1-maxwell-c} can be bounded by
\begin{align}
&t\int_{\R^4} e^{it q_K(\xi,\eta)} \frac{\nabla_{\xi} q_K(\xi,\eta)}{q_K(\xi,\eta)}  \bra{\widehat{f_\kat}(t,\eta),\alpha_\mu\wh{f_\kao}(t,\xi+\eta)}\, d \eta,	\label{eq:1-maxwell-c-1}\\
&\int_{0}^{t}\int_{\R^4} e^{is q_K(\xi,\eta)} \frac{\nabla_{\xi} q_K(\xi,\eta)}{q_K(\xi,\eta)}   \bra{\widehat{f_\kat}(s,\eta),\alpha_\mu\wh{f_\kao}(s,\xi+\eta)}\, d \eta ds,\label{eq:1-maxwell-c-2}\\
&\int_{0}^{t}s\int_{\R^4} e^{is q_K(\xi,\eta)}  \frac{\nabla_{\xi} q_K(\xi,\eta)}{q_K(\xi,\eta)}  \p_s\bra{\widehat{f_\kat}(s,\eta),\alpha_\mu\wh{f_\kao}(s,\xi+\eta)}\, d \eta ds,\label{eq:1-maxwell-c-3}
\end{align}
for $\kao,\kat \in \{+,-\}$.  Since  \eqref{eq:1-maxwell-c-2}  can be  estimated  analogously to those of \eqref{eq:1-maxwell-c-2}, we only treat \eqref{eq:1-maxwell-c-1} and \eqref{eq:1-maxwell-c-2}

\emph{Estimates for \eqref{eq:1-maxwell-c-1}.} Let us first consider \eqref{eq:1-maxwell-c-1}. To estimate \eqref{eq:1-maxwell-c-1}, we proceed by dividing the sign relation into time resonance case $\kao = \kat$
 and  time non-resonance case$\kao \neq \kat$.

\textbf{The time resonance case: $\kao =\kat$.} Let us  decompose $|\xi|,|\xi+\eta|,|\eta|$ in \eqref{eq:1-maxwell-c-1} into dyadic numbers $N_0,N_1,N_2 \in 2^\Z$, respectively. Then we obtain that 
\begin{align}\label{eq:1-maxwell-c-1-de}
	&t\int_{\R^4} e^{it q_K(\xi,\eta)} \frac{\nabla_{\xi} q_K(\xi,\eta)}{q_K(\xi,\eta)} \rho_\textbf{N}(\xi,\eta) \bra{\widehat{f_{\kat,N_2}}(t,\eta),\alpha_\mu\wh{f_{\kao,N_1}}(t,\xi+\eta)}\, d \eta,	
\end{align}
where $\rho_\textbf{N}(\xi,\eta)= \rho_{N_0}(
\xi)\rho_{N_1}(\xi+\eta)\rho_{N_2}(\eta)$. By \eqref{eq:resonance-time-maxwell}, we see that
\begin{align}\label{eq:1-maxwell-multi}
	\normo{\frac{\nabla_{\xi} q_K \rho_\textbf{N}}{q_K}}_\cm \les \bra{\nmax}^{20}N_0^{-1}.
\end{align}
To bound \eqref{eq:1-maxwell-c-1-de}, we decompose the case into the following two cases (i) $\nmin \le \bra{t}^{-1+\theta}$ and (ii) $ \nmin \ge \bra{t}^{-1+\theta}$, where  $\theta = \frac\beta2 + \frac\de 2$. The estimates for Case (i) can be obtained,  by using   H\"older inequality and Sobolev embedding, as follows:
\begin{align*}
	\sum_{\textbf{N}, \nmin \le \bra{t}^{-1+\theta}} \left\|\eqref{eq:1-maxwell-c-1-de}\right\|_{L_\xi^2} &\les t \sum_{\textbf{N}, \nmin \le \bra{t}^{-1+\theta}} N_0^{-1}\|\rho_{N_0}\|_{L_\xi^2} N_1N_2 \normo{\wh{f_{\kat,N_2}}(t)}_{L_\eta^4} \normo{\wh{f_{\kao,N_1}}(t)}_{L_\eta^4} \\
	&\les t \sum_{\textbf{N}, \nmin \le \bra{t}^{-1+\theta}} N_0N_1N_2 \normo{P_{N_2}(xf_{\kat})(t)}_{L^2} \normo{P_{N_1}(xf_{\kao})(t)}_{L^2} \\
	&\les t \ve_1^2\sum_{\textbf{N}, \nmin \le \bra{t}^{-1+\theta}} N_0N_1 N_2 \bra{N_1}^{-20}\bra{N_2}^{-20}\\
	&\les \bra{t}^{\theta} \ve_1^2.
\end{align*}

For the Case (ii), $L^2 \times L^\infty$ estimates give rise to divergence, which comes from frequency sum. To overcome this divergence, We further exploit the space resonance \eqref{eq:resonance-space-maxwell} by using the integration by parts in $\eta$. Then 
\begin{align}
&\int_{\R^4} e^{it q_K(\xi,\eta)} \nabla_\eta\mathbf{L_N}(\xi,\eta) \bra{\widehat{f_{\kat,N_2}}(t,\eta),\alpha_\mu\wh{f_{\kao,N_1}}(t,\xi+\eta)}\, d \eta,	\label{eq:1-maxwell-c-1-1}\\
&\int_{\R^4} e^{it q_K(\xi,\eta)} \mathbf{L_N}(\xi,\eta) \bra{\nabla_\eta\widehat{f_{\kat,N_2}}(t,\eta),\alpha_\mu\wh{f_{\kao,N_1}}(t,\xi+\eta)}\, d \eta,	\label{eq:1-maxwell-c-1-2}	
\end{align} 
and a symmetric term, where
\begin{align*}
	\mathbf{L_N}(\xi,\eta) := \frac{\nabla_{\xi} q_K(\xi,\eta)\nabla_\eta q_K(\xi,\eta)}{q_K(\xi,\eta)|\nabla_\eta q_K(\xi,\eta)|^2} \rho_\textbf{N}(\xi,\eta).
\end{align*}
For the multiplier bounds, simple calculation leads us that
\begin{align*}
	\left\|\nabla_\eta\mathbf{L_N}\right\|_{L_{\xi,\eta}^\infty} \les N_0^{-2}N_2^{-1}\bra{\nmax}^{7} \;\;\mbox{ and }\;\; \left\|\mathbf{L_N}\right\|_{L_{\xi,\eta}^\infty} \les N_0^{-2}\bra{\nmax}^5.
\end{align*}
Using these pointwise bound of multiplier and H\"older inequality, we estimate that 
\begin{align*}
	&\sum_{\textbf{N}, \nmin \ge \bra{t}^{-1+\theta}} \left\|\eqref{eq:1-maxwell-c-1-1}\right\|_{L_\xi^2}\\
	 & \les \sum_{\textbf{N}, \nmin \ge \bra{t}^{-1+\theta}}\!\! \bra{\nmax}^{7}N_0^{-2}N_2^{-1}  \normo{\rho_{N_0}}_{L_\xi^2} \normo{\rho_{N_1}\rho_{N_2}}_{L_\eta^2} \normo{\wh{f_{\kat,N_2}}(t)}_{L_\eta^4}\normo{\wh{f_{\kat,N_1}}(t)}_{L_\eta^4}\\
	&\les \sum_{\textbf{N}, \nmin \ge \bra{t}^{-1+\theta}}\!\! \bra{\nmax}^{-18} N_1^{\frac12} N_2^\frac12  \prod_{j=1}^{2}\normo{xf_{\ka_j,N_j}(t)}_{H^{20}}\\
	&\les  \bra{t}^{\zeta}\ve_1^2,
\end{align*}
for arbitrary  small $0 <\zeta \ll 1$. Similarly, \eqref{eq:1-maxwell-c-1-2} can be estimated.

\textbf{The time non-resonance case: $\kao \neq \kat$.} Since this case does not possess time resonance, we may estimate more simply to the resonance case. Indeed, compared to \eqref{eq:1-maxwell-multi}, we have better bound 
\begin{align*}
	\normo{\frac{\nabla_\xi q_K}{q_K(\bra{\xi}+\bra{\eta})^{20}}}_{\cm} \les 1,
\end{align*}
which implies that
\begin{align*}
	\|\eqref{eq:1-maxwell-c-1}\|_{L_\xi^2} \les t \normo{\psi_\kao(t)}_{H^n}\normo{\psi_\kat(t)}_{W^{20,\infty}} \les \ve_1^2.
\end{align*}

\emph{Estimates for \eqref{eq:1-maxwell-c-3}.}  As observed in the estimates for \eqref{eq:1-maxwell-c-1}, it suffices to consider only the time resonance case $\kao =\kat$. Let us first decompose only $|\xi|$ into a dyadic number $N_0 \in 2^\Z$.

For \eqref{eq:1-maxwell-c-3},   with the multiplier bound
\begin{align*}
	\normo{\frac{\nabla_\xi q_K}{q_K} \rho_{N_0}(\xi)}_{L_{\xi,\eta}^\infty} \les N_0^{-1} \;\;\mbox{ and }\;\; \normo{\frac{\nabla_\xi q_K}{q_K(\bra{\xi}+\bra{\eta})^{20}} \rho_{N_0}(\xi)}_{\cm} \les N_0^{-1},
\end{align*} 
Using  H\"older inequality for low frequency regime and Lemma \ref{lem:coif-mey} for high frequency regime, we see that
\begin{align*}
	\|\eqref{eq:1-maxwell-c-3}\|_{L_\xi^2} &\les \int_0^t \sum_{N_0 \le \bra{s}^{-\frac12}}s N_0^{-1} \|\rho_{N_0}\|_{L_\xi^2} \|e^{-\kat is \bra{D}}\p_s f_\kat(s)\|_{H^{20}} \|\psi_\kao(s)\|_{L^2}  ds \\
	&\hspace{2cm} +\int_0^t \sum_{N_0 \ge \bra{s}^{-\frac12}}s N_0^{-1}  \|e^{-\kat is \bra{D}}\p_s f_\kat(s)\|_{H^{20}} \|\psi_\kao(s)\|_{W^{20,\infty}}  ds \\
	&\les \int_0^t  \bra{s}^{-\frac32+\frac{3\beta}2+\frac{3\de}2} \ve_1^2ds \les \ve_1^2,
\end{align*}
where we used Lemma \ref{lem:time-derivative}. This finishes the proof of \eqref{eq:1st-moment-m}.

\section{Second order weighted estimates for gauge: Proof of \eqref{eq:2nd-moment-m}}\label{sec:2nd-esti-maxwell}
We begin with
\[
\normo{x^2 F_{\mu,\ka}(t)}_{\doth^2} = \normo{|\xi|^2 \nabla_\xi^2 \wh{F_{\mu,\ka}}(t,\xi)}_{L_\xi^{2}}.
\]
Similarly to the above section, two derivatives $\nabla_\xi^2$ fall on ether $|\xi|^{-1}$, $e^{is q_K(\xi,\eta)}$, and $\wh{f_\kao}(\xi+\eta)$. Then we obtain six cases and we brief for the estimates of each cases.

\emph{$\nabla_\xi^2$ falls on all $\wh{f_\kao}$.} This case can be estimated by H\"older inequality and Proposition \ref{prop:timedecay}.

\emph{$\nabla_\xi^2$ falls on all $|\xi|^{-1}$ and $\nabla_\xi^2$ fall on $|\xi|^{-1}$ and $\wh{f_\kao}(\xi+\eta)$, respectively.} These two cases can be estimated by Hardy-Littlewood-Sobolev inequality.

\emph{$\nabla_\xi$ falls on the phase function $e^{isq_K(\xi,\eta)}$ and another $\nabla_\xi$ falls on $\nabla_\xi q_K$, $|\xi|^{-1}$,  or $\wh{f_\kao}(\xi+\eta)$.} By a $L^\infty \times L^2$ estimates using Lemma \ref{lem:coif-mey}, we readily obtain the desired bounds for these two cases.

\emph{$\nabla_\xi^2$ falls on the phase function $e^{isq_K(\xi,\eta)}$.} The most complicated term appears in this case:
\begin{align}\label{eq:2nd-maxwell}
|\xi|\int_{0}^{t}s^2 \!\! \int_{\R^4} e^{is q_K(\xi,\eta)}  \left[\nabla_{\xi} q_K(\xi,\eta) \right]^2  \bra{\widehat{f_\kat}(s,\eta),\alpha_\mu\wh{f_\kao}(s,\xi+\eta)}\, d \eta ds, 
\end{align}
for $\kao, \kat \in \{+,-\}$. In this case, since we need to recover the time growth, while $[\nabla_\xi q_K]^2$ does not possess a null structure that can eliminate the resonances. In view of \eqref{eq:resonance-space-maxwell}, $\kao=\kat$ implies the space resonance. We omit the estimates for non-resonant case $\kao \neq \kat$. Using integration by parts in time, \eqref{eq:2nd-maxwell} can be bounded by the following terms:
Similarly to above section, we only consider the resonant case $\kao = \kat$. Similarly to the previous section, by the integration by parts in time, we obtain the following types of contributions:
	\begin{align}	
		&t^2 \int_{\R^4}  e^{it q_K(\xi,\eta)}  \mathbf{L}(\xi,\eta) \bra{\widehat{f_\kat}(t,\eta),\alpha_\mu\wh{f_\kao}(t,\xi+\eta)}\, d \eta , \label{eq:2nd-maxwell-time-a}\\
		&\int_0^t s \int_{\R^4}  e^{it q_K(\xi,\eta)}  \mathbf{L}(\xi,\eta) \bra{x\widehat{f_\kat}(s,\eta),\alpha_\mu\wh{f_\kao}(s,\xi+\eta)}\, d \eta ds, \label{eq:2nd-maxwell-time-b}\\
		&  \int_0^t  s^2 \int_{\R^4}  e^{is q_K(\xi,\eta)}  \mathbf{L}(\xi,\eta) \p_s \bra{\widehat{f_\kat}(s,\eta),\alpha_\mu\wh{f_\kao}(s,\xi+\eta)}\, d \eta ds, \label{eq:2nd-maxwell-time-c}
	\end{align}
 where the multiplier 
$$
 \mathbf{L}(\xi,\eta) := \frac{|\xi| \left[\nabla_{\xi} q_K(\xi,\eta) \right]^2 }{ q_K(\xi,\eta)}.
$$
Then we have the multiplier bound
\begin{align*}
	\normo{\frac{\textbf{L}}{(\bra{\xi}+\bra{\eta})^{20}}}_{\cm} \les 1.
\end{align*}
From this estimate, we use Lemma \ref{lem:coif-mey} to deduce that
\begin{align*}
\normo{\eqref{eq:2nd-maxwell-time-a}}_{L_\xi^2} \les t^2 \|\psi_{\kao,N_1}(t)\|_{W^{20,\infty}}\|\psi_{\kat,N_2}(t)\|_{H^n} ds  \les \bra{t}^{\beta+\de}\ve_1^2 \les \bra{t}^\frac18 \ve_1^2.
\end{align*}
The estimate of \eqref{eq:2nd-maxwell-time-b} can be treated in the same way. To conclude the bound of \eqref{eq:2nd-maxwell-time-c}, we use Lemma \ref{lem:time-derivative}, obtaining
\begin{align*}
	\|\eqref{eq:2nd-maxwell-time-c}\|_{L_\xi^2} &\les \int_0^t s^2 \left( \|\p_s f_{\kao,N_1}(s)\|_{H^{20}}\| \psi_{\kat,N_2}(s)\|_{W^{20,\infty}}+ \| \psi_{\kao,N_1}(s)\|_{W^{20,\infty}}\|\p_s f_{\kat,N_2}(s)\|_{H^{20}} \right) ds \\
	&\les \int_0^t\bra{s}^{-2+\frac{5\beta}2+\frac{5\de}2} \ve_1^2ds \les \ve_1^2. 
\end{align*}
Therefore, we conclude \eqref{eq:2nd-moment-m} from the estimates \eqref{eq:2nd-maxwell-time-a}--\eqref{eq:2nd-maxwell-time-c}.

\section*{Acknowledgement}
The author was supported
in part by NRF-2022R1I1A1A0105640812 and  NRF-2019R1A5A1028324,
the National Research Foundation of Korea(NRF) grant funded by the Korea government (MOE) and (MSIT), respectively.

\bibliographystyle{plain}
\bibliography{ReferencesKiyeon}

\begin{thebibliography}{10}

\bibitem{bemaupoup1998}
Philippe Bechouche, Norbert~J Mauser, and Fr\'ed\'eric Poupaud.
\newblock ({S}emi)-nonrelativistic limits of the {D}irac equation with external
  time-dependent electromagnetic field.
\newblock {\em Comm. Math. Phys.}, 197(2):405--425, 1998.

\bibitem{book:bjorken-drell}
James~D. Bjorken and Sidmey~D. Drell.
\newblock {\em {R}elativistic quamtum mechanics}.
\newblock New York:McGraw-Hill, 1964.

\bibitem{bour1996}
Nikolaos Bournaveas.
\newblock Local existence for the {M}axwell-{D}irac equations in three space
  dimensions.
\newblock {\em Communications in Partial Differential Equations},
  21(5-6):693--720, 1996.

\bibitem{canher2018-analpde}
Timothy Candy and Sebastian Herr.
\newblock Transference of bilinear restriction estimates to quadratic variation
  norms and the {D}irac-{K}lein-{G}ordon system.
\newblock {\em Anal. PDE}, 11(5):1171--1240, 2018.

\bibitem{cankaulin2019}
Timothy Candy, Christopher Kauffman, and Hans Lindblad.
\newblock Asymptotic behavior of the {M}axwell-{K}lein-{G}ordon system.
\newblock {\em Comm. Math. Phys.}, 367(2):683--716, 2019.

\bibitem{CKLY2022}
Yonggeun Cho, Soonsik Kwon, Kiyeon Lee, and Changhun Yang.
\newblock {The modified scattering for Dirac equations of scattering--critical
  nonlinearity}.
\newblock {\em Advances in Differential Equations}, 29(3/4):179 -- 222, 2024.

\bibitem{cloos}
Cai~Constantin Cloos.
\newblock On the long-time behavior of the three-dimensional {D}irac-{M}axwell
  equation with zero magnetic field.
\newblock {\em Bielefeld: Universit\"at Bielefeld}, pages 1--127, 2020.

\bibitem{anfosel2010}
Piero D'Ancona, Damiano Foschi, and Sigmund Selberg.
\newblock Null structure and almost optimal local well-posedness of the
  {M}axwell-{D}irac system.
\newblock {\em American Journal of Mathematics}, 132(3):771--839, 2010.

\bibitem{ancooka2017}
Piero D'Ancona and Mamoru Okamoto.
\newblock On the cubic {D}irac equation with potential and the
  {L}ochak-{M}ajorana condition.
\newblock {\em J. Math. Anal. Appl.}, 456(2):1203--1237, 2017.

\bibitem{ansel}
Piero D'Ancona and Sigmund Selberg.
\newblock Global well-posedness of the {M}axwell-{D}irac system in two space
  dimensions.
\newblock {\em Journal of Functional Analysis}, 260(8):2300--2365, 2011.

\bibitem{dolimayu2024}
Shijie Dong, Kuijie Li, Yue Ma, and Xu~Yuan.
\newblock {Global behavior of small data solutions for the 2D
  Dirac-Klein-Gordon system}.
\newblock {\em To appear in Trans. Amer. Math. Soc.}

\bibitem{dowya2024}
Shijie Dong and Zoe Wyatt.
\newblock {Asymptotic stability for the Dirac-Klein-Gordon system in two space
  dimensions}.
\newblock {\em To appear in Ann. Inst. H. Poincar\`e. Anal. Non Lin\`eaire}.

\bibitem{fangwangyang2021}
Allen Fang, Qian Wang, and Shiwu Yang.
\newblock Global solution for massive {M}axwell-{K}lein-{G}ordon equations with
  large {M}axwell field.
\newblock {\em Ann. PDE}, 7(1):Paper No. 3, 69, 2021.

\bibitem{flasimon1997}
Mosh\'{e} Flato, Jacques C.~H. Simon, and Erik Taflin.
\newblock Asymptotic completeness, global existence and the infrared problem
  for the {M}axwell-{D}irac equations.
\newblock {\em Mem. Amer. Math. Soc.}, 127(606):x+311, 1997.

\bibitem{gav2019}
Cristian Gavrus.
\newblock Global well-posedness for the massive {M}axwell-{K}lein-{G}ordon
  equation with small critical {S}obolev data.
\newblock {\em Annslas of PDE}, 5(10), 2019.

\bibitem{gaoh}
Cristian Gavrus and Sung jin Oh.
\newblock Global well-posedness of high dimensional {M}axwell-{D}irac for small
  critical data.
\newblock {\em Memoirs of the American Mathematical Society}, 264(1279), 2020.

\bibitem{geor1991}
Vladimir Georgiev.
\newblock Small amplitude solutions of the maxwell-dirac equations.
\newblock {\em Indiana Univ. Math. J.}, 40:845--883, 1991.

\bibitem{gemasha2008}
Pierre Germain, Nader Masmoudi, and Jalal Shatah.
\newblock {Global Solutions for 3D Quadratic {S}chr\"odinger Equations}.
\newblock {\em International Mathematics Research Notices}, 2009(3):414--432,
  12 2008.

\bibitem{gemasha2012-jmpa}
Pierre Germain, Nader Masmoudi, and Jalal Shatah.
\newblock Global solutions for 2d quadratic {S}chr\"odinger equations.
\newblock {\em Journal de Mathematiques Pures et Appliquees}, 97(5):505--543,
  2012.

\bibitem{gemasha2012-annals}
Pierre Germain, Nader Masmoudi, and Jalal Shatah.
\newblock Global solutions for the gravity water waves equation in dimension 3.
\newblock {\em Annals of Mathematics}, 175:691--754, 2012.

\bibitem{gross1966}
Leonard Gross.
\newblock The {C}auchy problem for the coupled {M}axwell and {D}irac equations.
\newblock {\em Communications on Pure and Applied Mathematics}, 19:1--15, 1966.

\bibitem{gunakatsa2009}
Stephen Gustafson, Kenji Nakanishi, and Tai-Peng Tsai.
\newblock Scattering theory for the {G}ross-{P}itaevskii equation in three
  dimensions.
\newblock {\em Commun. Contemp. Math.}, 11(4):657--707, 2009.

\bibitem{hanipusha2013}
Zaher Hani, Fabio Pusateri, and Jalal Shatah.
\newblock Scattering for the {Z}akharov system in 3 dimensions.
\newblock {\em Comm. Math. Phys.}, 322(3):731--753, 2013.

\bibitem{huhoh2016}
Hyungjin Huh and Sung-Jin Oh.
\newblock {Low regularity solutions to the Chern-Simons-Dirac and the
  Chern-Simons-Higgs equations in the Lorenz gauge}.
\newblock {\em Communications in Partial Differential Equations},
  41(3):375--397, 2016.

\bibitem{iopau2014}
Alexandru~D. Ionescu and Benoit Pausader.
\newblock Global solutions of quasilinear systems of {K}lein-{G}ordon equations
  in 3{D}.
\newblock {\em J. Eur. Math. Soc. (JEMS)}, 16(11):2355--2431, 2014.

\bibitem{iopau2019}
Alexandru~D. Ionescu and Beno\^{i}t Pausader.
\newblock On the global regularity for a wave-{K}lein-{G}ordon coupled system.
\newblock {\em Acta Math. Sin. (Engl. Ser.)}, 35(6):933--986, 2019.

\bibitem{book:iopau2022}
Alexandru~D. Ionescu and Beno\^{i}t Pausader.
\newblock {\em The {E}instein-{K}lein-{G}ordon coupled system: global stability
  of the {M}inkowski solution}, volume 213 of {\em Annals of Mathematics
  Studies}.
\newblock Princeton University Press, Princeton, NJ, 2022.

\bibitem{klaiwangyang2020}
Sergiu Klainerman, Qian Wang, and Shiwu Yang.
\newblock Global solution for massive {M}axwell-{K}lein-{G}ordon equations.
\newblock {\em Comm. Pure Appl. Math.}, 73(1):63--109, 2020.

\bibitem{krieluhr2015}
Joachim Krieger and Jonas L\"{u}hrmann.
\newblock Concentration compactness for the critical {M}axwell-{K}lein-{G}ordon
  equation.
\newblock {\em Ann. PDE}, 1(1):Art. 5, 208, 2015.

\bibitem{kristertata2015-duke}
Joachim Krieger, Jacob Sterbenz, and Daniel Tataru.
\newblock {Global well-posedness for the Maxwell-Klein-Gordon equation in $4+1$
  dimensions: Small energy}.
\newblock {\em Duke Mathematical Journal}, 164(6):973--1040, 2015.

\bibitem{masnaka2003-imrn}
Nader Masmoudi and Kenji Nakanishi.
\newblock {Nonrelativistic limit from Maxwell-Klein-Gordon and Maxwell-Dirac to
  Poisson-Schr\"odinger}.
\newblock {\em International Mathematics Research Notices}, 2003(13):697--734,
  2003.

\bibitem{masnaka2003-cmp}
Nader Masmoudi and Kenji Nakanishi.
\newblock {Uniqueness of Finite Energy Solutions for Maxwell-Dirac and
  Maxwell-Klein-Gordon Equations}.
\newblock {\em Comm. Math. Phys.}, 243:123--136, 2003.

\bibitem{ohtataru2016-inven}
Sung-Jin Oh and Daniel Tataru.
\newblock Global well-posedness and scattering of the {$(4+1)$}-dimensional
  {M}axwell-{K}lein-{G}ordon equation.
\newblock {\em Invent. Math.}, 205(3):781--877, 2016.

\bibitem{ohtata2018}
Sung-Jin Oh and Daniel Tataru.
\newblock Energy dispersed solutions for the {$(4+1)$}-dimensional
  {M}axwell-{K}lein-{G}ordon equation.
\newblock {\em Amer. J. Math.}, 140(1):1--82, 2018.

\bibitem{psa2005}
Maria Psarelli.
\newblock {Maxwell--Dirac equations in four-dimensional Minkowski space}.
\newblock {\em Communications in Partial Differential Equations},
  30(1-2):97--119, 2005.

\bibitem{pusa}
Fabio Pusateri.
\newblock Modified scattering for the {B}oson star equation.
\newblock {\em Comm. Math. Phys.}, 332(3):1203--1234, 2014.

\bibitem{pusasha2013}
Fabio Pusateri and Jalal Shatah.
\newblock Space-time resonances and the null condition for first-order systems
  of wave equations.
\newblock {\em Communications on Pure and Applied Mathematics},
  66(10):1495--1540, 2013.

\bibitem{rodtao2004}
Irgo Rodnianski and Terrence Tao.
\newblock Global regularity for the {M}axwell-{K}lein-{G}ordon equation with
  small critical {S}obolev norm in high dimensions.
\newblock {\em Commun. Math. Phys.}, 251:377--426, 2004.

\bibitem{book:schwartz}
Matthew~D. Schwartz.
\newblock {\em {Quantum Field Theory and the Standard Model}}.
\newblock Cambridge: Cambridge University Press, 2013.

\bibitem{seltes2010}
Sigmund Selberg and Achenef Tesfahun.
\newblock Finite-energy global well-posedness of the {M}axwell-{K}lein-{G}ordon
  system in {L}orenz gauge.
\newblock {\em Comm. Partial Differential Equations}, 35(6):1029--1057, 2010.

\bibitem{yang2018}
Shiwu Yang.
\newblock On the global behavior of solutions of the {M}axwell-{K}lein-{G}ordon
  equations.
\newblock {\em Adv. Math.}, 326:490--520, 2018.

\bibitem{yangyu2019}
Shiwu Yang and Pin Yu.
\newblock On global dynamics of the {M}axwell-{K}lein-{G}ordon equations.
\newblock {\em Camb. J. Math.}, 7(4):365--467, 2019.

\end{thebibliography}

\medskip

\end{document}